\def\standalonechapter{}

\ifdefined\standalonechapter
\documentclass[11pt]{article}

\usepackage[utf8]{inputenc}
\usepackage[english]{babel}

\usepackage{amsmath, amssymb, amsthm}
\usepackage{mathtools}
\usepackage{mathrsfs}
\usepackage{bbm}

\usepackage{geometry}
\geometry{margin=1in}
\usepackage[
    colorlinks=false,
    pdfborder={0 0 1}
]{hyperref}
\usepackage{enumitem}

\usepackage{booktabs}

\theoremstyle{plain}
\newtheorem{theorem}{Theorem}[section]
\newtheorem{lemma}[theorem]{Lemma}
\newtheorem{proposition}[theorem]{Proposition}
\newtheorem{corollary}[theorem]{Corollary}

\theoremstyle{definition}

\newtheorem{remark}[theorem]{Remark}

\numberwithin{equation}{section}


\begin{document}

\begin{flushright}
\begin{minipage}{0.55\textwidth} 
\small\itshape
This is the fourth of ten papers devoted to reflections on the Millennium Problem.  
It generalizes known results on the 3D Navier–Stokes equations based on previous studies.  
We hope that the presented material will be useful, and we would be grateful for your attention, verification,  
and participation in the further development of the topic.
\end{minipage}
\end{flushright}

\vspace{1.5em}

\begin{center}
    {\LARGE\bfseries {An \(\varepsilon\)\nobreakdash-free rank-6 decoupling estimate \\for the paraboloid surface}\par}
    \vspace{1em}
    {\large\bfseries Pylyp Cherevan\par}
\end{center}

\vspace{1em}

{\small\noindent\textbf{Abstract.}
For the paraboloid decomposition
\[
   F=\sum_{\Theta} F_{\Theta},
   \qquad
   \Theta\subset\{\lvert\xi\rvert\sim\lambda\},\;
   \operatorname{rad}(\Theta)=r=\lambda^{-2/3},
\]
we prove a log-free estimate
\[
   \|F\|_{L^{6}(Q_{\lambda})}\;\lesssim\;
   \lambda^{\Sigma_{\lambda}}\,
   D^{\Sigma_{D}}\,
   \Bigl(\sum_{\Theta}\|F_{\Theta}\|_{L^{6}}^{2}\Bigr)^{1/2},
   \qquad
   \Sigma_{\lambda},\Sigma_{D}<0,\;
   \lambda\to\infty ,
\]
where $D=\lambda^{1/12}$ and the cylinder $Q_{\lambda}$ is defined in~\eqref{eq:def-cylinder}.

\smallskip\noindent
\textbf{Key components of the argument}
\begin{itemize}
\item \emph{Broad geometry of rank 3.}  
      Bilipschitz behavior of normals gives  
      $\max_{i<j<k}\|n_{i}\wedge n_{j}\wedge n_{k}\|\gtrsim\lambda^{-5/4}$,  
      which through a trilinear Kakeya–BCT insertion contributes $+\tfrac{5}{36}$ in $\lambda$.
\item \emph{Kernel estimate.}  
      Twelve integrations (6 in $t$, 6 in $x'$) and measure analysis (Schur + TT$^\ast$)
      yield $\|K\|_{L^2\to L^2}\lesssim \lambda^{-9/2}D^{-3}$ (rows $-9/2$ in $\lambda$, $-3$ in $D$).
\item \emph{Robust-Kakeya.}  
      A density threshold $>c_*D$ brings a factor $D$  
      ($+\tfrac{1}{12}$ in $\lambda$, $+1$ in $D$).
\item \emph{Algebraic “shell.”}  
      Excluding a neighborhood $N_{\beta}(P)$ contributes $-\tfrac{1}{12}$ in $\lambda$ and $-1$ in $D$.
\item \emph{Tube packing.}  
      This section is explanatory; its contribution is not used in the final balance.
\item \emph{Narrow cascade.}  
      A double $7/8$ rescaling drives the flow out of the narrow regime and
      contributes a \emph{negative} $-\tfrac{5}{64}$ in $\lambda$ (zero in $D$).
\end{itemize}

\smallskip\noindent
\textbf{Summary of exponents.}
\[
\Sigma_\lambda=\frac{5}{36}-\frac{9}{2}-\frac{5}{64}
=-\frac{2557}{576}\approx -4.44<0,\qquad
\Sigma_D=-3+1-1=-3<0.
\]
The negativity of both sums removes the traditional $\lambda^\varepsilon$- and $D^\varepsilon$-losses.
}

\tableofcontents
\fi


\newpage

\section*{Introduction}\label{sec:intro}
\addcontentsline{toc}{section}{Introduction}

\paragraph{Goal.}
We study the inequality
\[
   \bigl\|F\bigr\|_{L^{6}(Q_{\lambda})}
   \;\lesssim\;
   \lambda^{\Sigma_{\lambda}}\,
   D^{\Sigma_{D}}\,
   \Bigl(
        \sum_{\Theta}\|F_{\Theta}\|_{L^{6}}^{2}
   \Bigr)^{1/2},
   \qquad \lambda\to\infty ,
\]
for functions with the spectral decomposition
\(F=\sum_{\Theta}F_{\Theta}\)
over caps of angular radius \(r=\lambda^{-2/3}\).

The resulting exponents are
\(\Sigma_\lambda=-\frac{2557}{576}\approx -4.44<0\), \(\Sigma_D=-3<0\),
so that the traditional factors \(\lambda^\varepsilon\) and \(D^\varepsilon\) are absent in the final estimate.

The working region is the cylinder \(Q_{\lambda}\subset\mathbb{R}_{t}\times\mathbb{R}^{3}_{x}\)
from~\eqref{eq:def-cylinder}, with additional parameters
\(D=\lambda^{1/12}\) and
\(\alpha=c_{0}\,rD^{1/2}=c_{0}\lambda^{-5/8}\)
fixed in~\S\ref{sec:notation}.

\paragraph{Structure of the proof.}
\begin{enumerate}\setlength{\itemsep}{4pt}
\item
\emph{Preliminaries}
(\S\ref{sec:notation}):
introduction of the scales \(r,\rho,\alpha\) and conventions on estimates.
\item
\emph{Broad rank-3 geometry}
(\S\ref{sec:broad3}):
bilipschitz behavior of the map \(\xi\mapsto n(\xi)\)
guarantees the existence of a triple with
\(
   \|n_{i}\wedge n_{j}\wedge n_{k}\|
      \gtrsim\lambda^{-5/4}
\);
insertion into the trilinear Bennett–Carbery–Tao inequality
yields a contribution \(+\tfrac{5}{36}\) in \(\lambda\).
\item
\emph{Kernel estimate}
(\S\ref{sec:kernel}):
twelve integrations (6 in \(t\), 6 in \(x'\)) plus Schur/\(TT^{*}\)
   give \(\|K\|_{L^2\to L^2}\lesssim \lambda^{-9/2}D^{-3}\).
\item
\emph{Remaining blocks.}
  \begin{itemize}\setlength{\itemsep}{3pt}
  \item
  \emph{Robust Kakeya} (\S\ref{sec:kakeya}):  
  if the density is \(>c_{*}D\) we gain a factor \(D\),
  contributing \(+\tfrac{1}{12}\) in \(\lambda\) and \(+1\) in \(D\).
  \item
  \emph{Tube packing} (\S\ref{sec:tube-pack}):  
  presented for context; its contribution is \emph{not} used together with Robust Kakeya
  (the regimes are mutually exclusive; see \S\ref{sec:balance}).
  \item
  \emph{Algebraic “shell”} (\S\ref{sec:alg-skin}):  
  cutting off the layer \(N_{\beta}(P)\) contributes
  \(-\tfrac{1}{12}\) in \(\lambda\) and \(-1\) in \(D\).
  \item
  \emph{Narrow cascade} (\S\ref{sec:narrow}):  
  a double \(7/8\) rescaling removes the flow from the narrow regime
  and contributes a negative term \(-\tfrac{5}{64}\) in \(\lambda\) (zero in \(D\)).
  \end{itemize}
\item
\emph{Summary of exponents.}
\[
\Sigma_\lambda=\frac{5}{36}-\frac{9}{2}+\frac{1}{12}-\frac{1}{12}-\frac{5}{64}
=-\frac{2557}{576}\approx-4.44,\qquad
\Sigma_D=(-3)+1-1=-3.
\]
The negativity of both sums removes the traditional \(\lambda^\varepsilon\)- and \(D^\varepsilon\)-losses.
\end{enumerate}

\paragraph{Navigation.}
\begin{itemize}\setlength{\itemsep}{2pt}
\item
\S\ref{sec:notation} — parameters and notation;
\item
\S\ref{sec:broad3}--\ref{sec:narrow} — thematic blocks;
\item
\S\ref{sec:balance} — summary table and final estimate;
\item
Appendices A–D — technical lemmas, checkpoints,
and clarifications (Appendix D shows robustness with respect to \(x\)-dependence of the amplitude).
\end{itemize}

\paragraph{Notation.}
We write \(A\lesssim B\) for \(A\le C\,B\) with an absolute constant~\(C\);
notation \(A\lesssim_{\delta}B\) allows dependence \(C=C(\delta)\).
Fix a single \(\varepsilon\in(0,10^{-2}]\) and use the standard log-budget:
for any fixed \(k\) and all sufficiently large \(\lambda\)
\[
\log^{k}\!\lambda \ \le\ C_{\varepsilon}\,\lambda^{\varepsilon},\qquad
\log^{k}\!D \ \le\ C_{\varepsilon}\,D^{\varepsilon}\quad(D=\lambda^{1/12}).
\]
In all counts of \(\sigma_\lambda,\sigma_D\) we verify that the sums of exponents remain negative,
so no extra \(\lambda^{\varepsilon}\) or \(D^{\varepsilon}\) factors appear in the final formula,
and possible logarithms are absorbed into the constant \(C_{\varepsilon}\).

\paragraph*{Short guide to delicate points (FAQ).}
\begin{enumerate}
\item \textbf{Statement of the result.}
Within the present framework (assumptions and notation of \S\ref{sec:notation})
we obtain
\[
  \|F\|_{L^{6}(Q_{\lambda})}
  \;\lesssim\;
  \lambda^{\Sigma_{\lambda}} D^{\Sigma_{D}}
  \Bigl(\sum_{\Theta}\|F_{\Theta}\|_{L^{6}}^{2}\Bigr)^{1/2},
\]
with negative cumulative exponents: $\Sigma_{\lambda}<0$, $\Sigma_{D}<0$.
Optimality of the exponents and constants is not discussed.

\item \textbf{The exponent table is a roadmap, not a proof.}
The actual inequalities and constants live in the sections and appendices; the table only summarizes their contribution.

\item \textbf{IBP and \(TT^{\ast}\) are not “double-counted.”}
Six integrations in \(t\) and six in the transverse variables \(x'\) belong to the phase kernel; measure separation is handled separately via \(TT^{\ast}\). These steps are independent.

\item \textbf{Time localization and log-freedom.}
The estimates are carried out on critical windows \(|I|\sim \lambda^{-2}\); gluing across windows/dyads does not create logs—the cumulative exponents are already negative.

\item \textbf{Robust–Kakeya and “tube packing” are mutually exclusive regimes.}
The main line uses Robust–Kakeya; the tube-packing section is provided for context and is not used simultaneously.

\item \textbf{Narrow zone and the passage \(L^{2}\to \dot H^{-1}\).}
The passage uses the zeroth-order operator \( |\nabla|^{-1}\nabla\!\cdot \) (after the Leray projector) together with the null-form factor; this yields a “clean” power in \(\lambda\) without \(\lambda^{\delta}\)-patches.

\item \textbf{“Few caps” scenario.}
When the number of active caps is small, we apply local \(\ell^{2}\!\to\!L^{6}\) orthogonality on \(Q_{\lambda}\); the strength of the final theorem is determined by the “broad” branch (broad geometry \(+\) kernel \(+\) Robust–Kakeya).

\item \textbf{Notation.}
We use \(A\lesssim B\) and \(A\gtrsim B\) for one-sided bounds, and \(A\asymp B\) for two-sided comparability. The symbol \(\simeq\) is interpreted as two-sided comparability \emph{only} where explicitly stated.
\end{enumerate}

\newpage

\section{Notation and preliminaries}\label{sec:notation}

In this section we fix the global parameters and conventions
used throughout the paper.  
Local definitions are given in the sections
where they are first needed.

\subsection{Paraboloid and normals}\label{subsec:paraboloid}
We consider the three-dimensional paraboloid
\[
   \Sigma \;=\;
   \bigl\{\,(\xi,\tau)\in\mathbb R^{3}\times\mathbb R
           : \tau = |\xi|^{2}\bigr\}.
\]
Its outward unit normal
\begin{equation}\label{eq:norm}
   n(\xi)
   \;=\;
   \frac{(-2\xi,\,1)}{\sqrt{1+4|\xi|^{2}}},
   \qquad
   \xi\in\mathbb R^{3},
\end{equation}
will play a key role in the geometric analysis.

\begin{lemma}[angular bilipschitzness]\label{lem:bilip}
There exists an absolute constant \(c_{*}\in(0,1)\) such that
for any \(\xi,\eta\) with \(|\xi|,|\eta|\sim\lambda\) one has
\[
   c_{*}\,\angle(\xi,\eta)
   \;\le\;
   \angle\!\bigl(n(\xi),n(\eta)\bigr)
   \;\le\;
   c_{*}^{-1}\,\angle(\xi,\eta).
\]
In particular, for sufficiently large \(\lambda\) one may take
\(c_{*}=1/2\).
\end{lemma}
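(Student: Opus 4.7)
The plan is to convert both sides of the inequality to dot products and then compare them directly. Since \(n(\xi), n(\eta)\) are unit vectors in \(\mathbb{R}^{4}\), the identity \(1-\cos\theta = 2\sin^{2}(\theta/2)\) gives \(\angle(n(\xi),n(\eta))^{2} \asymp 1 - n(\xi)\cdot n(\eta)\) for angles bounded away from \(\pi\), and likewise \(\angle(\xi,\eta)^{2} \asymp 1 - \hat\xi\cdot\hat\eta\) with \(\hat\xi = \xi/|\xi|\). The bilipschitz claim therefore reduces to pinching the ratio \((1 - n(\xi)\cdot n(\eta))/(1 - \hat\xi\cdot\hat\eta)\) between two positive constants for \(\lambda\) large.

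From \eqref{eq:norm} a direct calculation gives
\[
   n(\xi)\cdot n(\eta)
   \;=\;
   \frac{4\xi\cdot\eta + 1}{\sqrt{(1+4|\xi|^{2})(1+4|\eta|^{2})}}.
\]
Factoring \(\sqrt{1+4|\xi|^{2}} = 2|\xi|\bigl(1 + O(\lambda^{-2})\bigr)\) and similarly for \(\eta\) under the assumption \(|\xi|,|\eta|\sim\lambda\) yields, after a straightforward expansion,
\[
   n(\xi)\cdot n(\eta)
   \;=\;
   \hat\xi\cdot\hat\eta \;+\; O(\lambda^{-2}),
\]
hence \(1 - n(\xi)\cdot n(\eta) = (1 - \hat\xi\cdot\hat\eta) + O(\lambda^{-2})\). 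Inside the paraboloid decomposition the relevant angular separations satisfy \(\angle(\xi,\eta) \gtrsim r = \lambda^{-2/3}\), so the main term \(1 - \hat\xi\cdot\hat\eta \gtrsim \lambda^{-4/3}\) dominates the \(O(\lambda^{-2})\) correction. The ratio then tends to \(1\) and one extracts \(c_{*} = 1/2\) by choosing \(\lambda\) sufficiently large.

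The step I expect to be slightly delicate is the borderline regime \(\angle(\xi,\eta) = O(\lambda^{-1})\), where the \(O(\lambda^{-2})\) correction is only marginally smaller than the main term \(\angle(\xi,\eta)^{2}\); there one must also track that differing radii \(|\xi|\neq|\eta|\) move \(n(\xi)\) only through the radial component of the Gauss map. A cleaner and more robust route sidesteps this by differentiating the Gauss map directly: a short computation gives \(|dn(\xi)[v]| \asymp |v|/\lambda\) for tangential \(v\perp\xi\) and \(|dn(\xi)[\hat\xi]|\asymp\lambda^{-2}\) for the radial direction. Integrating this Jacobian along a two-segment path (a great arc on the sphere of radius \(|\xi|\) from \(\hat\xi\) to \(\hat\eta\), followed by a radial segment to \(\eta\)) yields the upper bound, and the lower bound follows from nondegeneracy of the tangential derivative. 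The radial contribution is harmless because the extra factor \(\lambda^{-1}\) suppresses it relative to the tangential one, which confirms \(c_{*}=1/2\) for all large \(\lambda\).
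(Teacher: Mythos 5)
Your overall strategy is sound and genuinely different from the paper's: the paper expands $n(\xi)=\frac{(-\xi,1/2)}{|\xi|}(1+O(\lambda^{-2}))$ and compares Euclidean distances $|n(\xi)-n(\eta)|$ with $|\xi-\eta|/|\xi|$, whereas you go through dot products (your $n(\xi)\cdot n(\eta)=\hat\xi\cdot\hat\eta+O(\lambda^{-2})$ is a clean identity) and then, recognizing the issue, switch to differentiating the Gauss map. Your Jacobian computation is correct: $|dn(\xi)[v]|\asymp|v|/\lambda$ for tangential $v\perp\xi$ and $|dn(\xi)[\hat\xi]|\asymp\lambda^{-2}$ for the radial direction. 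You also deserve credit for flagging the delicate regime $\angle(\xi,\eta)=O(\lambda^{-1})$, which the paper's sketch glosses over.

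However, the concluding sentence, that ``the radial contribution is harmless because the extra factor $\lambda^{-1}$ suppresses it relative to the tangential one,'' does not close the gap you identified. Integrating $\lambda^{-2}$ over a radial segment of length $\big||\xi|-|\eta|\big|$ gives a contribution of size $\lambda^{-2}\big||\xi|-|\eta|\big|$ to $|n(\xi)-n(\eta)|$, while the tangential contribution is $\asymp\angle(\xi,\eta)$. Since the hypothesis $|\xi|,|\eta|\sim\lambda$ allows $\big||\xi|-|\eta|\big|\sim\lambda$, the radial term can be as large as $\lambda^{-1}$, which is \emph{not} dominated by $\angle(\xi,\eta)$ when the latter is small. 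Concretely, taking $\xi=\lambda e_1$ and $\eta=2\lambda e_1$ gives $\angle(\xi,\eta)=0$ but $\angle(n(\xi),n(\eta))\asymp\lambda^{-1}\neq 0$, so the upper inequality $\angle(n(\xi),n(\eta))\le c_*^{-1}\angle(\xi,\eta)$ fails. Your earlier remark that ``the relevant angular separations satisfy $\angle(\xi,\eta)\gtrsim r=\lambda^{-2/3}\gg\lambda^{-1}$'' is exactly what is needed, but it is a restriction on how the lemma is used, not a consequence of its stated hypotheses — so either that restriction (or the assumption $\big||\xi|-|\eta|\big|\lesssim\lambda\,\angle(\xi,\eta)$, e.g. $|\xi|=|\eta|$) should be made explicit. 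To be fair, the paper's own proof of Lemma~\ref{lem:bilip} writes $|n(\xi)-n(\eta)|=\frac{|\xi-\eta|}{|\xi|}(1+O(\lambda^{-2}))=\angle(\xi,\eta)(1+O(\lambda^{-2}))$, and that second equality is false for the same collinear counterexample; so both arguments are implicitly working on a fixed sphere $|\xi|=|\eta|$, and your Gauss-map route simply makes the missing hypothesis more visible.
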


\begin{proof}
From \eqref{eq:norm},
\(
   n(\xi)=\dfrac{(-\xi,\tfrac12)}{|\xi|}
          \Bigl(1+O(\lambda^{-2})\Bigr)
\)
uniformly for \(|\xi|\sim\lambda\).
Hence
\(
   |n(\xi)-n(\eta)|
   = \dfrac{|\xi-\eta|}{|\xi|}\bigl(1+O(\lambda^{-2})\bigr)
   = \angle(\xi,\eta)\bigl(1+O(\lambda^{-2})\bigr).
\)
Since \(\angle(u,v)=|u-v|\,(1+O(\angle^{2}))\) for unit vectors,
we obtain the desired two-sided comparison with error
\(O(\lambda^{-2})\), which is absorbed by choosing \(c_{*}=1/2\).
\end{proof}

\subsection{The scale $\lambda$ and associated radii}
\label{subsec:scales}

Throughout the text we fix the frequency parameter
\begin{equation}\label{eq:lambda-def}
   \lambda \;\ge\; 2,
   \qquad
   D      \;=\; \lambda^{1/12}.
\end{equation}
We use three geometric quantities
\begin{equation}\label{eq:radii}
   r      \;=\; \lambda^{-2/3},\quad 
   \rho   \;=\; \lambda^{-1/2},\quad 
   \alpha \;=\; c_{0}\,r\,D^{1/2}
           = c_{0}\,\lambda^{-5/8},
   \qquad
   c_{0}=10^{-3}.
\end{equation}

\smallskip
\noindent
$r$ — angular radius of a cap,  
$\rho$ — cross-section of a “wave” tube,  
$\alpha$ — the basic small angle appearing in
Lemma~\ref{lem:A4}.

\subsection{Decomposition into caps}\label{subsec:caps}
In this subsection all angles are measured in the spherical (angular) metric. For a center \(\xi_{0}\in\mathbb{R}^{3}\)
and angular radius \(r>0\) set
\[
   \Theta(\xi_{0},r)
   \;=\;
   \bigl\{\xi:\ |\xi|\sim\lambda,\ \angle(\xi,\xi_{0})\le r\bigr\}.
\]
Below we use the scale \(r=\lambda^{-2/3}\).
Since the area of an angular cap on the sphere \(\{|\xi|=\lambda\}\) scales as
\(\mathrm{area}(\Theta)\simeq \pi(\lambda r)^2\), the total number of caps is of order
\begin{equation}\label{eq:num-caps}
  \#\Theta
  \;\asymp\;
  \frac{\mathrm{area}(S^{2}_\lambda)}{\mathrm{area}(\Theta)}
  \;=\;
  \frac{4\pi\lambda^{2}}{\pi(\lambda r)^{2}}
  \;\simeq\; \frac{4}{r^{2}}
  \;\sim\; \lambda^{4/3},
\end{equation}
which matches the standard angular discretization on the sphere \(\{|\xi|=\lambda\}\).

Let \(F(t,x)\) be a smooth function. For its spatial Fourier transform
\[
   \widehat{F}(t,\xi)=\int_{\mathbb{R}^{3}} F(t,x)\,e^{-2\pi i x\cdot\xi}\,dx
\]
we introduce the cap decomposition
\begin{equation}\label{eq:cap-decomposition}
   F
   \;=\;
   \sum_{\Theta} F_{\Theta},
   \qquad
   \widehat{F_{\Theta}}
   :=\widehat{F}\,\chi_{\Theta},
\end{equation}
where \(\chi_{\Theta}\) is a smooth cutoff of the cap \(\Theta\).

\subsection{Working cylinder}\label{subsec:cylinder}

All $L^{p}$ norms are taken on the set
\begin{equation}\label{eq:def-cylinder}
   Q_{\lambda}
   \;=\;
   \bigl\{(t,x)\in\mathbb R\times\mathbb R^{3}\bigm|
          |t|\le\tfrac12\lambda^{-3/2},\;
          |x|\le\tfrac12\lambda^{-1/2}\bigr\}.
\end{equation}

\subsection{Conventions for estimates}\label{subsec:symbols}

\begin{itemize}
\item
$A\lesssim B$ means $A\le C\,B$ with an absolute constant $C$ independent of $\lambda$ and $D$.
\item
$A\lesssim_{\delta} B$ — the constant $C$ may depend only on the fixed parameter $\delta>0$.
\item
$C_{\varepsilon}$ — the constant depends only on the chosen $0<\varepsilon\le10^{-2}$ (see~\S\ref{subsec:epsilon})
and is independent of $\lambda,D$; when necessary we write $C_{k,\varepsilon}$ to emphasize
dependence on a fixed integer $k$.
\end{itemize}

\subsection{Logarithmic budget}\label{subsec:log-budget}

Everywhere below the logarithmic factors of the form $\log^k\lambda$ and $\log^k D$
arise only from coverings with fixed multiplicity or from a finite number of
dyadic decompositions in scales/angles. The depth of such coverings and the overlap
multiplicity are bounded by an absolute constant independent of $\lambda$ and $D$. Therefore
all such factors can be absorbed into a universal constant $C$,
without affecting the powers of~$\lambda$ and~$D$ in the summary table~\S\ref{subsec:balance-table-final}.

In particular, this means that in the final statement of
Theorem~\ref{thm:main} (§\ref{subsec:main-theorem}) the exponents $\sigma_\lambda$
and $\sigma_D$ are written without $+\varepsilon$ add-ons: the standard factors
$\lambda^\varepsilon$ and $D^\varepsilon$ are indeed absent.

\subsection{The parameter $\varepsilon$}\label{subsec:epsilon}

Throughout the paper we \emph{fix} a single value
\[
   0<\varepsilon\le10^{-2}.
\]
All small exponents that appear are denoted by the same letter $\varepsilon$
(after a possible retuning within this budget). Their cumulative contribution
is accounted for in \S\ref{sec:balance}; the dependence of the final constant $C_\varepsilon$
is only on the chosen $\varepsilon$ (and fixed discrete parameters such as $k$),
but not on $\lambda$ or $D$.

\newpage

\section{Broad rank-3 geometry and its role in Kakeya--BCT}%
\label{sec:broad3}

Throughout this section we fix
\[
   r=\lambda^{-2/3},\quad
   D=\lambda^{1/12},\quad
   \alpha:=c_{0}\,rD^{1/2}=c_{0}\,\lambda^{-5/8},\qquad
   c_{0}>0,\;\;\lambda\ge2 .
\]
Angles and norms are always measured in the spherical (angular) metric.

\subsection{Bilipschitzness of the map \texorpdfstring{$\xi\mapsto n(\xi)$}{xi->n(xi)}}%
\label{subsec:lip}

Angles are measured in the spherical (angular) metric on unit spheres.

Let \(|\xi|\sim\lambda\). For the paraboloid normal
\[
  n(\xi)=\frac{(-2\xi,\,1)}{\sqrt{\,1+4|\xi|^{2}\,}}
\]
there exists an absolute constant \(c_\ast\in(0,1)\) such that for any
\(\xi,\eta\) with \(|\xi|,|\eta|\sim\lambda\) one has
\begin{equation}\label{eq:lip}
   c_\ast\,\angle(\xi,\eta)
   \;\le\;
   \angle\!\bigl(n(\xi),n(\eta)\bigr)
   \;\le\;
   c_\ast^{-1}\,\angle(\xi,\eta).
\end{equation}
In particular, for sufficiently large \(\lambda\) one may take \(c_\ast=\tfrac12\).

\begin{proof}[Sketch]
From the exact formula for \(n(\xi)\) we get, for \(|\xi|\sim\lambda\),
\[
  (1+4|\xi|^{2})^{-1/2}
  \;=\;
  \frac{1}{2|\xi|}\,\Bigl(1-\frac{1}{8|\xi|^{2}}+O(|\xi|^{-4})\Bigr),
\]
and hence the uniform asymptotics for \(|\xi|\sim\lambda\)
\[
  n(\xi)=\frac{(-\xi,\,1/2)}{|\xi|}\,\Bigl(1+O(\lambda^{-2})\Bigr).
\]
With \(u=\xi/|\xi|\), \(v=\eta/|\eta|\), we obtain
\[
  n(\xi)=\bigl(-u,\,\tfrac{1}{2|\xi|}\bigr)+O(\lambda^{-2}),\qquad
  n(\eta)=\bigl(-v,\,\tfrac{1}{2|\eta|}\bigr)+O(\lambda^{-2}).
\]
The radial part contributes only an \(O(\lambda^{-2})\) correction (since \(|\xi|,|\eta|\sim\lambda\)),
therefore
\[
  |n(\xi)-n(\eta)|
  \;=\;
  |u-v|\,\bigl(1+O(\lambda^{-2})\bigr)+O(\lambda^{-2}).
\]
For unit vectors \(u,v\) we have \(|u-v|\asymp \angle(u,v)=\angle(\xi,\eta)\), and also
\(|n(\xi)-n(\eta)|\asymp \angle(n(\xi),n(\eta))\). Absorbing the small \(O(\lambda^{-2})\) terms into the constants,
we obtain the two-sided estimate \eqref{eq:lip}.
\end{proof}

\begin{remark}
Writing the main term as
\[
n(\xi)=\frac{(-\xi,\,1/2)}{|\xi|}\bigl(1+O(\lambda^{-2})\bigr)
\]
is consistent with formula~(\ref{eq:norm}) and the subsequent expansion in Appendix~\ref{app:A1};
this normalization (without “losing” a factor \(2\) in the spatial component)
is used later in controlling minors and Gram matrices.
\end{remark}

\subsection{The functional \texorpdfstring{$\mathrm{Broad}_{3}$}{Broad3}}%
\label{subsec:broad-def}

For functions \(F_{1},\dots,F_{6}\) on \(\mathbb{R}^{4}\) set
\[
   \mathrm{Broad}_{3}(x):=
   \min_{i<j<k}
   \frac{|F_{i}(x)\,F_{j}(x)\,F_{k}(x)|^{1/3}}
        {\|n_{i}\wedge n_{j}\wedge n_{k}\|^{1/3}},
   \qquad
   Q(x):=\prod_{m=1}^{6}|F_{m}(x)|^{1/2}.
\]
By Lemmas~\ref{lem:A2}–\ref{lem:A4} in combination with the bilipschitzness of the map
\S\ref{subsec:lip}, there exists a triple \((i,j,k)\) with
\(\|n_{i}\wedge n_{j}\wedge n_{k}\|\ge\tfrac{1}{16}\,\alpha^{2}\).
Hence
\begin{equation}\label{eq:broad-upper}
   \mathrm{Broad}_{3}(x)\ \le\
   \frac{|F_{i}F_{j}F_{k}|^{1/3}}{(1/16)^{1/3}\,\alpha^{2/3}}.
\end{equation}

Since $\binom{6}{3}=20$ and each $F_m$ appears in exactly $\binom{5}{2}=10$ triples, we have
\[
\min_{i<j<k}|F_iF_jF_k|^{1/3}
\le\Bigl(\prod_{i<j<k}|F_iF_jF_k|^{1/3}\Bigr)^{1/20}
=\prod_{m=1}^6 |F_m|^{10\cdot(1/3)\cdot(1/20)}=Q^{1/3}.
\]

Therefore $\|{\rm Broad}_3\|_{L^2}\lesssim \alpha^{-2/3}\|Q^{1/3}\|_{L^2}$ and, consequently, in the trilinear insertion
there appear the factor $\alpha^{-2/9}=\lambda^{5/36}$ (since $\alpha=c_0\lambda^{-5/8}$) and the exponent $1/18$
on $\prod_m\|F_m\|_{L^2}$ in (\ref{eq:BCT-final}).

\paragraph{Choice of functions \(F_m\).}
Split the family of caps into six disjoint subsets
\(\mathcal{C}_{1},\dots,\mathcal{C}_{6}\) so that the centers of their normals
satisfy the conditions of Lemma~\ref{lem:A4}, and set
\[
   F_{m}:=\sum_{\Theta\in\mathcal{C}_{m}} F_{\Theta},\qquad m=1,\dots,6.
\]

\paragraph{Local choice of six classes.}
The global partition yields $M = O(D)$ classes (Lemma~\ref{lem:AD-colouring}; see also Appendix~\ref{app:C2}).
Covering $Q_\lambda$ by a fixed family of $O(1)$ cells (at the scale of \S\ref{subsec:scales})
and applying in each cell a greedy selection of six disjoint classes, we obtain
subfamilies $\mathcal{C}_{1},\dots,\mathcal{C}_{6}$ for which the conditions of Lemma~\ref{lem:A4}
hold. The overlap of the covering is bounded by a constant, so summation over
cells does not introduce additional powers of~$\lambda$ or~$D$.
Hence we work with
\[
F_{m} := \sum_{\Theta\in\mathcal{C}_{m}} F_{\Theta}, \quad m = 1,\dots,6.
\]

\subsection{Insertion into Kakeya–BCT}%
\label{subsec:bct-insert}

\paragraph{Model and dynamics.}
From now on we work in one of the following equivalent contexts:
\begin{itemize}
\item[(i)] $F=Eg$, where $E$ is the extension operator for the paraboloid surface
$\{(\tau,\xi):\ \tau=|\xi|^2\}$ and $g$ is a function on frequency space;
\item[(ii)] $F$ is a solution of the Schrödinger equation $(\partial_t - i\Delta_x)F=0$
in $Q_\lambda$ with frequency support $|\xi|\sim\lambda$.
\end{itemize}
In both cases the decomposition by $\alpha$-caps $\Theta$ in frequency space
leads to wave localization of $F_\Theta$ in a family of tubes $\mathcal T_\Theta$
of radius $\lambda^{-1/2}$ and length $\lambda^{-3/2}$:
\begin{equation}\label{eq:wave-packet}
  \|F_\Theta - \mathbf 1_{\mathcal T_\Theta}F_\Theta\|_{L^2(Q_\lambda)}
  \;\le\; C_N\,\lambda^{-N}\,\|F_\Theta\|_{L^2(\mathbb R\times\mathbb R^3)},
  \quad\forall N\gg 1.
\end{equation}
This localization is used when passing from $f_m$ to the tube-localized form
in the \emph{Robust--Kakeya} block (§\ref{subsec:kakeya-robust}).

\paragraph{Applicability of BCT.}
The constructed triple \((i,j,k)\) from §\,\ref{subsec:broad-def} ensures
transversality of normals at level \((1/16)\,\alpha^{2}\), as required in the
trilinear Bennett–Carbery–Tao theorem. In the outer multiplicity count
we use \emph{only} the high angular density region.

\begin{lemma}[Multiplicity compensation: high-density version]\label{lem:compD3}
Let \(C_1,C_2,C_3\) be three independent angular classes
(see the partition into \(O(D)\) classes in Lemma~\ref{lem:AD-colouring}),
and set \(f_m:=\sum_{\Theta\in C_m}F_\Theta\), \(m=1,2,3\).
Define
\[
M(t,x):=\sum_{\Theta}\mathbf 1_{\mathcal T_\Theta}(t,x),\qquad
f_m^{\mathrm{hi}}:=f_m\cdot \mathbf 1_{\{M\ge cD\}}.
\]
Then
\[
   \|f_m^{\mathrm{hi}}\|_{L^2(Q_\lambda)}
   \ \lesssim\ D^{1/2}\,
   \Bigl(\sum_{\Theta\in C_m}\|F_\Theta\|_{L^2(Q_\lambda)}^2\Bigr)^{1/2}.
\]
Consequently, one Cauchy–Schwarz step in the trilinear insertion
over the set \(\{M\ge cD\}\) produces a factor of \(D^{3/2}\)
(respectively \(D^{3}\) at the level of the squared \(TT^{\!*}\)).
\end{lemma}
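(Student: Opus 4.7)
The plan is to prove the high-density $L^{2}$ bound by combining the wave-packet localization~\eqref{eq:wave-packet} with a pointwise Cauchy--Schwarz over the tubes through each point, together with the pointwise bound $M(x)\lesssim D$ coming from the $O(D)$-colouring of Lemma~\ref{lem:AD-colouring}. The per-function gain of $D^{1/2}$ then delivers $D^{3/2}$ once inserted into a trilinear product and $D^{3}$ after squaring for the $TT^{\ast}$ formulation.

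Concretely, I would first apply~\eqref{eq:wave-packet} to every $F_{\Theta}$ with $\Theta\in C_{m}$: up to a Schwartz tail of size $O_{N}(\lambda^{-N})$, absorbed into the final constant, the function $F_{\Theta}$ is replaced by a smooth tube-localized version essentially supported on $\mathcal T_{\Theta}$. Setting $N_{m}(x):=\#\{\Theta\in C_{m}:\,x\in\mathcal T_{\Theta}\}$, Cauchy--Schwarz in the finite sum of contributing caps yields the pointwise inequality
\[
 |f_{m}(x)|^{2}\;\le\;N_{m}(x)\sum_{\Theta\in C_{m}}|F_{\Theta}(x)|^{2}\,\mathbf 1_{\mathcal T_{\Theta}}(x).
\]
On the high-density set I bound $N_{m}(x)\le M(x)\lesssim D$; the upper bound on $M$ uses that, by the colouring, tubes within any single class have bounded spatial overlap, so $M$ is at most a constant per class times the $O(D)$ classes. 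Integrating and using that $F_{\Theta}\mathbf 1_{\mathcal T_{\Theta}}$ has $L^{2}$-norm comparable to $\|F_{\Theta}\|_{L^{2}(Q_{\lambda})}$ gives
\[
 \|f_{m}^{\mathrm{hi}}\|_{L^{2}(Q_{\lambda})}^{2}\;\lesssim\;D\,\sum_{\Theta\in C_{m}}\|F_{\Theta}\|_{L^{2}(Q_{\lambda})}^{2},
\]
which is the desired bound after taking a square root. The factors $D^{3/2}$ and $D^{3}$ in the second statement then follow by applying this inequality to the three (respectively six) factors of the trilinear product (respectively its $TT^{\ast}$-symmetrization) and collecting powers of $D$.

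The main obstacle is the pointwise estimate $M(x)\lesssim D$: the colouring of Lemma~\ref{lem:AD-colouring} controls angular separation between tubes in a single class, and this must be converted into spatial near-disjointness via the bilipschitz normal map of~\S\ref{subsec:lip} in order to conclude that $N_{m}(x)$ is uniformly $O(1)$ at every $(t,x)\in Q_{\lambda}$. A subtler point is that $\mathbf 1_{\mathcal T_{\Theta}}$ should actually be a smooth cutoff adapted to a slightly dilated tube so that the Schwartz decay in~\eqref{eq:wave-packet} is legitimately usable; the inflation costs only an absolute constant and does not affect the $D^{1/2}$ scaling. Once these two technical points are in place, Cauchy--Schwarz and the integration above complete the argument.
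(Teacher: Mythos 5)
Your proposal follows the same high-level strategy as the paper's proof sketch: wave-packet localization \eqref{eq:wave-packet}, a pointwise Cauchy--Schwarz over the tubes through each point (which is exactly~\eqref{eq:pointwise}), a multiplicity bound $\lesssim D$, and an integration. The critical difference — and the gap — is in how the multiplicity bound is justified.

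The paper gets $\sup M\lesssim D$ by \emph{localizing to a single $\alpha$-cap}~$\kappa$ (\S\ref{subsec:kakeya-robust}): within $\kappa$ there are only $\#G\sim(\alpha/r)^{2}\sim D$ caps of radius $r$, and each contributes at most one tube, so the bound is a trivial cardinality count; the estimate~\eqref{eq:kakeya-factor} is then applied within each $\kappa$ and summed over $\alpha$-caps with bounded multiplicity (Lemmas~\ref{lem:AD-colouring} and~\ref{lem:A8}). You instead assert $M(x)\lesssim D$ globally, from ``bounded spatial overlap per class $\times$ $O(D)$ classes'', using the angular separation $\geq\alpha$ inside each class $C_{m}$. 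This does not work at the scale of $Q_{\lambda}$. Two tubes $\mathcal T_{\Theta},\mathcal T_{\Theta'}$ with $\angle(\xi_{\Theta},\xi_{\Theta'})\sim\alpha$ have axes separated at time $t$ by $\lesssim 2|t|\,\lambda\alpha$; on $|t|\leq\tfrac12\lambda^{-3/2}$ this is $\lesssim\lambda^{-1/2}\alpha = c_{0}\lambda^{-9/8}$, which is \emph{far below} the tube radius $\rho=\lambda^{-1/2}$. Hence tubes within a single colour class overlap almost completely over $Q_{\lambda}$, not near-disjointly — $N_{m}(x)$ is \emph{not} $O(1)$ per class, and it can easily be $\gg D$. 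You flag this conversion as ``the main obstacle'' and suggest the bilipschitz normal map will fix it, but the mismatch is in the scales, not in the bilipschitz constant: over $Q_{\lambda}$ tubes only become spatially separated at angular scale $\gtrsim 1$, not at scale $\alpha\sim\lambda^{-5/8}$. The paper's $\alpha$-cap localization sidesteps this entirely by making the relevant family \emph{small} ($\#G\lesssim D$), which is a counting fact, not a disjointness fact. You would need to adopt that localization-and-gluing structure; as stated, the multiplicity bound is unsupported and the argument does not close.
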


\begin{proof}[Idea of the proof]
This is a direct application of the clustered version of the Robust--Kakeya
estimate \eqref{eq:kakeya-factor} from §\,\ref{subsec:kakeya-robust}, when $G$ consists
of caps within one $\alpha$-cap; in this case $\sup M \lesssim D$, and
passing from the pointwise estimate (4.3) to the integral one yields a factor $D$.
The correspondence $F_\Theta \leftrightarrow \mathcal T_\Theta$ is guaranteed by standard
wave packet localization:
$\|F_\Theta-\mathbf 1_{\mathcal T_\Theta}F_\Theta\|_{L^2(Q_\lambda)}
\le C_N\lambda^{-N}\|F_\Theta\|_{L^2}$ (stationary phase on the $Q_\lambda$ scale),
so replacing $F_\Theta$ by $F_\Theta\,\mathbf 1_{\mathcal T_\Theta}$ does not affect
the exponents. Summation over $\alpha$-caps in the full configuration is
with bounded multiplicity (Lemmas~\ref{lem:AD-colouring} and~\ref{lem:A8}),
so no additional powers of~$\lambda$ or~$D$ appear.
\end{proof}

\medskip
\noindent
The passage from $g=\sum_\Theta F_\Theta\,\mathbf 1_{\mathcal T_\Theta}$ (as in §\,\ref{subsec:kakeya-robust})
to $f_m=\sum_{\Theta\in C_m}F_\Theta$ (in §\,\ref{subsec:bct-insert}) is carried out
via wave packet localization: on $Q_\lambda$
\[
   \|F_\Theta-\mathbf 1_{\mathcal T_\Theta}F_\Theta\|_{L^2(Q_\lambda)}
   \ \le\ C_N\,\lambda^{-N}\,\|F_\Theta\|_{L^2}
\]
for any $N\gg 1$. Therefore the estimates of §\,\ref{subsec:kakeya-robust} apply
to $f_m$ with indicator $\mathbf 1_{\{M\ge cD\}}$.
This legitimizes the use of \eqref{eq:kakeya-factor} inside the trilinear insertion.
\medskip

\noindent
\emph{Trilinear insertion at high multiplicity.}
For \(F_{m}=\sum_{\Theta\in C_{m}}F_{\Theta}\), \(m=1,\dots,6\),
the trilinear Bennett–Carbery–Tao theorem, after one Cauchy–Schwarz step and
Lemma~\ref{lem:compD3}, gives
\[
   \Bigl\|\Bigl(\prod_{m=1}^{6}F_{m}\Bigr)\mathbf 1_{\{M\ge cD\}}\Bigr\|_{L^{6}}
   \ \le\
   C\,D^{3/2}\,\|\mathrm{Broad}_{3}\|_{L^{2}}^{\,1/3}\,
   \Bigl(\sum_{m=1}^{6}\|F_{m}\|_{L^{2}}^{2}\Bigr)^{1/2}.
\]
Using \eqref{eq:broad-upper} and Hölder’s inequality
\(
\|Q^{1/3}\|_{L^{2}}\le
(\prod_{m=1}^{6}\|F_{m}\|_{L^{2}})^{1/6}
\) for \(Q=\prod_{m=1}^{6}|F_{m}|^{1/2}\),
we obtain
\begin{equation}\label{eq:BCT-final}
   \Bigl\|\Bigl(\prod_{m=1}^{6}F_{m}\Bigr)\mathbf 1_{\{M\ge cD\}}\Bigr\|_{L^{6}}
   \ \le\
   C\,D^{3/2}\,\lambda^{5/36}\,
   \Bigl(\prod_{m=1}^{6}\|F_{m}\|_{L^{2}}\Bigr)^{1/18}\,
   \Bigl(\sum_{m=1}^{6}\|F_{m}\|_{L^{2}}^{2}\Bigr)^{1/2}.
\end{equation}
The sign \(+\tfrac{5}{36}\) in \(\lambda\) comes from \(\alpha=c_{0}\lambda^{-5/8}\).

\begin{remark}[Accounting for the $D$–exponent in Broad–BCT]\label{rem:D-account}
The factor \(D^{3/2}\) arising from \(\|f_m^{\mathrm{hi}}\|_{2}\)
belongs entirely to the Robust–Kakeya block (§\,\ref{subsec:kakeya-robust})
in the global balance; the row “Geometry (Broad–3)” in Table~\ref{subsec:balance-table-final}
has $D$–exponent \(0\).
On the complement \(\{M<cD\}\) one uses an alternative regime
(§\,\ref{sec:tube-pack} or §\,\ref{sec:narrow}); these regimes are
mutually exclusive.
\end{remark}

\subsection{Contribution to the balance of exponents}%
\label{subsec:balance-table}

\makeatletter
\providecommand{\captionof}[1]{\def\@captype{#1}\caption}
\makeatother

\begin{minipage}{\linewidth}
  \centering
  \captionof{table}{Summary balance of exponents (without using the contribution of §\,\ref{sec:tube-pack})}
  \label{tab:balance}
  \begin{tabular}{lcc}
    \toprule
    Block & $\lambda$–exponent & $D$–exponent \\
    \midrule
    Geometry (Broad--3) & $+\tfrac{5}{36}$ & $0$ \\
    Kernel (12 IBP)     & $-\tfrac{9}{2}$  & $-3$ \\
    Robust--Kakeya      & $+\tfrac{1}{12}$ & $+1$ \\
    Algebraic shell     & $-\tfrac{1}{12}$ & $-1$ \\
    Narrow cascade      & $-\tfrac{5}{64}$ & $0$ \\
    \midrule
    $\Sigma_{\lambda}$  & $-\tfrac{2557}{576}\approx -4.44$ & --- \\
    $\Sigma_{D}$        & --- & $-3$ \\
    \bottomrule
  \end{tabular}
\end{minipage}

\medskip
\noindent\textit{Remark (on the shortened sum).}
By the “simplified/shortened sum” in \(\lambda\) we mean the sum in which the rows of §\,\ref{sec:kakeya} (Robust--Kakeya: $+\tfrac{1}{12}$) and §\,\ref{sec:alg-skin} (“shell”: $-\tfrac{1}{12}$) are omitted
as mutually compensating in the main branch. In this shortened sum, upon replacing
the optimal exponent of block §\,\ref{sec:tube-pack} $-\tfrac{7}{6}$ by the more conservative
$-\tfrac{1}{2}$, only one row changes, and
\[
  \Sigma_{\lambda}
  = \tfrac{5}{36} - \tfrac{9}{2} - \tfrac{5}{64} - \tfrac{1}{2}
  = -\tfrac{2845}{576}\approx -4.94.
\]
For the “full” balance in the scenario “§\,\ref{sec:tube-pack} instead of §\,\ref{sec:kakeya}” while keeping §\,\ref{sec:alg-skin} we get
$\Sigma_{\lambda}=-\tfrac{2893}{576}\approx -5.02$, and with the optimal $-\tfrac{7}{6}$ —
$\Sigma_{\lambda}=-\tfrac{3277}{576}\approx -5.69$ (see §\,\ref{subsec:balance-table-final}).

\newpage

\section{Kernel estimate of the operator}\label{sec:kernel}

We show that for $\lambda \ge 2$
\begin{equation}\label{eq:kernel-claim}
   \|K\|_{L^{2} \to L^{2}}
   \;\lesssim\;
   \lambda^{-9/2}\,D^{-3},
   \qquad D = \lambda^{1/12}.
\end{equation}
This bound in $\lambda$ is \emph{stronger}, while in $D$ it is \emph{weaker}, than the benchmark $\lambda^{-4} D^{-7}$; together with the other
blocks (see~\S\ref{sec:balance}) it suffices for negativity of the cumulative exponents.

\subsection{Full kernel}\label{sec:kernel-1}
For the decomposition \(F=\sum_{\Theta} F_{\Theta}\) into caps (see §\ref{subsec:caps}) define
\[
   K(t,x;\,s,y)
   \;=\;
   \int_{\Theta^{6}}
        e^{\,i\Phi(t,x;\xi)\,-\,i\Phi(s,y;\xi)}\;
        a(t,\xi)\,a(s,\xi)\;
   d\xi,
\]
where integration is over the Cartesian product \(\Theta^{6}\) of the active caps, 
\(d\xi=\prod_{m=1}^{6}d\xi_m\), and the phase and amplitude are
\[
   \Phi(t,x;\xi)
   \;=\;
   t\!\!\sum_{m\le3}\!|\xi_m|^{2}
   \;-\;
   t\!\!\sum_{m>3}\!|\xi_m|^{2}
   \;+\;
   x'\!\cdot\!\Bigl(\sum_{m\le3}\xi'_m-\sum_{m>3}\xi'_m\Bigr),
   \qquad x'=(x_2,x_3),
\]
\[
   a(t,\xi)=\omega(t)\,\vartheta(\xi),\qquad
   \partial_t^{\,k}\omega(t)=O\!\bigl(\lambda^{3k/2}\bigr)\quad(k\ge0).
\]
Here \(\xi'_m=(\xi_{m,2},\xi_{m,3})\) is the transverse projection; when needed we also use \(\xi'=(\xi_2,\xi_3)\).

\begin{remark}[Time window with plateau]\label{rem:time-plateau}
Take \(\omega(t)=\chi(\lambda^{3/2}t)\) with \(\chi\in C_0^\infty([-1/2,1/2])\) and \(\chi\equiv1\) on \([-1/4,1/4]\).
On the central plateau \(|t|\le \tfrac14\lambda^{-3/2}\) we have \(\partial_t\omega\equiv0\), while all derivatives
\(\partial_t^{\,k}\omega\) are supported in the two edge layers \(|t|\in[\tfrac14,\tfrac12]\lambda^{-3/2}\) of total measure
\(\lesssim\lambda^{-3/2}\). This allows time IBP in the analysis of \(TT^{\!*}\) without paying for
derivatives of \(\omega\) on the plateau; the contribution of the edges of the window is subcritical (see also App.~\ref{app:C4}).
\end{remark}

\begin{remark}[Why the integral over \(\Theta^6\)]
The kernel \(K\) arises after one Cauchy–Schwarz step in the trilinear insertion
of Bennett–Carbery–Tao: this produces a product of six terms \(F_{\Theta}\), so the phase
\(\Phi\) depends on six frequencies \((\xi_1,\dots,\xi_6)\).
\end{remark}

\begin{remark}[On $x$–dependence of the amplitude]
In the main text we use \(a(t,\xi)\), independent of \(x\). If needed one may insert
a window \(\chi(x/\lambda^{1/2})\) and take \(a(t,x,\xi)=\omega(t)\,\vartheta(\xi)\,\chi(x/\lambda^{1/2})\). Then applying
the operator \(L_{x'}\) to \(\chi\) yields an additional gain \(\lambda^{-1/2}\) on top of the basic \((\lambda\alpha)^{-1}\);
the final exponent in \(\lambda\) only improves (see App.~\ref{app:damp}).
\end{remark}

\subsection{Frequency localization and transverse gradients}\label{subsec:gradients}

\paragraph{Partition by the size of $\mu_6(\xi):=|\partial_t\Phi_6|$.}
Let $\Phi_6$ be the six–frequency phase of the kernel, and 
\[
\mu_6(\xi_1,\dots,\xi_6)
:=\Bigl|\sum_{m\le3}|\xi_m|^2-\sum_{m>3}|\xi_m|^2\Bigr|
\quad(\text{independent of }t).
\]
Split $\Theta^6=\mathcal B_{\ge}\,\dot\cup\,\mathcal B_{<}$, where
\[
\mathcal B_{\ge}:=\{\mu_6 \ge c\,\lambda^{1/2}\},\qquad
\mathcal B_{<}:=\{\mu_6 < c\,\lambda^{1/2}\}.
\]

\begin{lemma}[Quantitative lower bound in broad--3]\label{lem:Blt-quant}
Let $\Theta_1,\Theta_2,\Theta_3$ be $\alpha$–caps in the \emph{broad--3} regime, i.e.
$\inf_{\xi_m\in\Theta_m}\|n(\xi_1)\wedge n(\xi_2)\wedge n(\xi_3)\|\ge c_{\mathrm{tr}}\alpha^2$.
Then for any $(t,x)\in Q_\lambda$ and any $\xi\in\Theta_1\cup\Theta_2\cup\Theta_3$
\[
  \mu(\xi)=|\partial_t\Phi(t,x;\xi)| \ \gtrsim\ \lambda^{1/2}\alpha^2 .
\]
\end{lemma}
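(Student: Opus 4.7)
The plan is to transfer the broad--3 transversality of normals to a lower bound on $|\partial_t\Phi|$ via the explicit paraboloid phase and the bilipschitz property of §\ref{subsec:lip}. The mechanism is \emph{transversality of normals $\Rightarrow$ transversality of (normalized) frequencies $\Rightarrow$ non-degeneracy of the phase derivative}. The asymptotic $n(\xi)=(-\xi,1/2)/|\xi|+O(\lambda^{-2})$ on $|\xi|\sim\lambda$ makes this correspondence explicit and quantitative.

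First I would restate the hypothesis on the frequency side. Using the asymptotic above, the wedge bound $\|n(\xi_1)\wedge n(\xi_2)\wedge n(\xi_3)\|\ge c_{\mathrm{tr}}\alpha^2$ translates, up to an $O(\lambda^{-2})$ correction absorbed by the implicit constant, into the Gram-determinant lower bound $|\det(\hat\xi_1,\hat\xi_2,\hat\xi_3)|\gtrsim\alpha^2$ with $\hat\xi_m=\xi_m/|\xi_m|$. In particular the three cap-direction vectors span a solid angle of size $\alpha^2$ on $S^2$, so the three linear functionals $\eta\mapsto \xi_m^{(0)}\cdot\eta$ are quantitatively independent with invertibility bound $\gtrsim\lambda\alpha$.

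Second, I would expand $\partial_t\Phi$ around the cap centers. Writing $\xi_m=\xi_m^{(0)}+\eta_m$ with $|\eta_m|\lesssim\lambda\alpha$ on each $\alpha$-cap at height $\lambda$, and using the identity $|\xi_m|^2=|\xi_m^{(0)}|^2+2\xi_m^{(0)}\cdot\eta_m+|\eta_m|^2$, the $t$-derivative splits into a constant piece determined by the cap centers and a linear piece $2\sum_m\pm\xi_m^{(0)}\cdot\eta_m$. Step~1 forbids the linear piece from vanishing simultaneously in all three transverse directions, yielding a contribution of order $|\xi^{(0)}|\cdot|\eta|\cdot\alpha\sim\lambda\cdot\lambda\alpha\cdot\alpha=\lambda^2\alpha^2$, which is a fortiori bounded below by the target $\lambda^{1/2}\alpha^2$. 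The stated threshold $\lambda^{1/2}\alpha^2$ is the weaker, structurally convenient scaling needed for the partition $\mathcal B_\ge\,\dot\cup\,\mathcal B_<$ of §\ref{subsec:gradients}.

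The main obstacle I expect is book-keeping the interaction between the ``+'' and ``--'' caps in the six-variable phase: the broad--3 hypothesis constrains only the ``+'' triple, so the ``--'' triple could in principle be chosen to cancel the broad--3 signal along the diagonal slice $\xi_{m+3}=\xi_m$. To close the argument cleanly I would either (i) restrict a priori to the large-$\mu_6$ region $\mathcal B_\ge$ where the lower bound is automatic, invoking the lemma on $\mathcal B_<$ only after a change of variables that freezes the ``--'' degrees of freedom at cap centers, or (ii) isolate, by a direct algebraic identity, the part of $\partial_t\Phi$ that depends only on the ``+'' caps modulo symmetric corrections, and apply Step~2 to that part alone. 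Either route reduces the estimate to the Gram-determinant bound of Step~1 combined with elementary scaling, with ample slack between the achieved $\lambda^2\alpha^2$ and the required $\lambda^{1/2}\alpha^2$.
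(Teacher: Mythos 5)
Your proposal interprets $\Phi$ as the six-frequency phase $\Phi_6$ and Taylor-expands $\partial_t\Phi_6=\sum_{m\le3}|\xi_m|^2-\sum_{m>3}|\xi_m|^2$ about the cap centers, but the key step in your Step~2 is a logical leap that does not hold. You assert that ``Step~1 forbids the linear piece from vanishing simultaneously in all three transverse directions,'' but the linear piece $2\sum_m\pm\,\xi_m^{(0)}\cdot\eta_m$ is a \emph{single scalar}, not a triple of components. The Gram--determinant bound on $\hat\xi_1,\hat\xi_2,\hat\xi_3$ certifies that the three covectors $\eta\mapsto\xi_m^{(0)}\cdot\eta$ are quantitatively linearly independent as functionals; it says nothing about the vanishing of a fixed linear combination in which each $\xi_m^{(0)}$ is paired with \emph{its own} $\eta_m$. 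Choosing $\eta_m\perp\xi_m^{(0)}$ for every $m$ already kills the linear piece outright, and since the ``$-$'' cap centers and fluctuations are unconstrained by the broad--3 hypothesis, the constant and quadratic pieces can also be arranged to cancel. So the claimed output of Step~2 ($\gtrsim\lambda^2\alpha^2$, and a fortiori $\gtrsim\lambda^{1/2}\alpha^2$) simply does not follow; the ``ample slack'' you cite is spurious. Your two escape routes do not repair this: (i) is circular, because on $\mathcal B_\ge$ the conclusion $\mu_6\ge c\lambda^{1/2}\gg\lambda^{1/2}\alpha^2$ already holds by definition of the basket, while on $\mathcal B_<$---the only place the lemma would earn its keep---you do not explain how freezing the ``$-$'' frequencies at cap centers prevents the cancellation above; (ii) appeals to an ``algebraic identity'' you never produce.

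This is also a genuinely different route from the paper's. The sketch in the paper does not Taylor-expand the six-frequency scalar $\partial_t\Phi_6$ at all: it passes to paraboloid-extension variables $(\tau,\zeta,\eta)$, records the single-frequency phase gradient $\bigl(\partial_\tau\Phi,\partial_\eta\Phi\bigr)=\bigl(|\eta|^2,\ \zeta+2\tau\eta\bigr)$, and then invokes the $4\times4$-minor/Gram control of Appendix~\ref{app:A5}--\ref{app:A6} to turn the triple minor of normals into \emph{nondegeneracy of that gradient}, i.e.\ quantitative transversality of the three caps' stationary-phase sets. The determinant of normals \emph{is} the right object for that purpose; it is not the right object for lower-bounding the single combined scalar you are trying to control, which is exactly where your argument fails. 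To save the Taylor-expansion route you would need an additional structural input tying the ``$+$'' and ``$-$'' triples together (e.g.\ a constraint that prevents the pairing $\xi_{m+3}\approx\xi_m$ with $\eta_m\perp\xi_m^{(0)}$), and neither your draft nor the fixes you sketch supplies it.
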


\noindent Note that the lower bound in Lemma \ref{lem:Blt-quant} is \emph{uniform in $(t,x)\in Q_\lambda$}:
the constants are controlled via the triple minor of normals (see \S\ref{app:A5}--\S\ref{app:A6}) and do not depend on the position in $Q_\lambda$.

\begin{proof}[Idea of the proof]
Pass to $\tau,\zeta,\eta$; compute $\partial_\tau\Phi=|\eta|^2$ and
$\partial_\eta\Phi=\zeta+2\tau\eta$; apply \ref{app:A5}–\ref{app:A6} to relate the triple minor of normals
to the nondegeneracy of phase gradients in $(\tau,\eta)$; see also §\ref{sec:broad3}.
\end{proof}

\begin{lemma}[R/N dichotomy on $B_{<}$]\label{lem:near-res-dichotomy}
Let $\xi\in\Theta^6$ be such that $\mu_6(\xi)=|\partial_t\Phi_6(\xi)|<c\,\lambda^{1/2}$
(i.e. $\xi\in\mathcal B_{<}$). Then the following alternative holds:
\begin{enumerate}\setlength\itemsep{2pt}
\item[\textup{(R)}] (\emph{robust}) there exists an $\alpha$–cap that contains $>c_\ast D$ centers of active caps 
(in the sense of §\,\ref{subsec:kakeya-density}); the Robust–Kakeya block is activated (§\,\ref{sec:kakeya}).
\item[\textup{(N)}] (\emph{narrow}) there exists an $O(\alpha)$–cluster containing at least $5$ out of the $6$ frequencies of the sextuple; then the narrow cascade applies (§\,\ref{sec:narrow}).
\end{enumerate}
\end{lemma}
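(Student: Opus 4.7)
The plan is to argue by contrapositive: assuming both (R) and (N) fail, I derive \(\mu_6(\xi)\ge c\lambda^{1/2}\), contradicting \(\xi\in\mathcal B_{<}\). The argument proceeds in three logical stages: a pigeonhole clustering of the six frequencies, extraction of an algebraic rigidity from the near-resonance, and conversion of that rigidity into a density threshold via the colouring of Lemma~\ref{lem:AD-colouring}. Concretely, I form the graph on \(\{\xi_1,\dots,\xi_6\}\) whose edges join pairs at angular distance \(\le C_1\alpha\) for a large fixed \(C_1\). If its largest connected component has \(\ge 5\) vertices, then five frequencies fit in one \(O(\alpha)\)-cluster and (N) holds; otherwise the sextuple splits into at least two clusters with inter-cluster angular gap \(\gtrsim\alpha\).

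In the split case, pick a reference frequency \(\xi^{\star}\) with \(|\xi^{\star}|\sim\lambda\) in the convex hull of the active caps and write \(\xi_m=\xi^{\star}+v_m\). Using \(\sum_m\sigma_m=0\) (three \(+\), three \(-\)) reduces the near-resonance to
\[
   \mu_6(\xi)=\bigl|\,2\,\xi^{\star}\!\cdot\!\sum_m\sigma_m v_m+\sum_m\sigma_m|v_m|^2\,\bigr|.
\]
For any non-trivial split the signed cluster sum \(\sum_m\sigma_m v_m\) has length of order \(|c_A-c_B|\cdot|2a-3|\gtrsim\alpha\), where \(c_A,c_B\) are the cluster centroids and \(a\) counts the \(+\)-signs in one cluster; this is nonzero since \(a\in\{0,1,2,3\}\). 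For \(\mu_6<c\lambda^{1/2}\) to hold, the linear piece must be essentially cancelled by the quadratic correction, which pins the configuration to an \(O(\alpha)\)-neighborhood of the classical resonant variety
\(
  V=\bigl\{\sum_m\sigma_m\xi_m=0,\ \sum_m\sigma_m|\xi_m|^2=0\bigr\},
\)
a codimension-\(3\) algebraic subset of \((\mathbb R^{3})^{6}\).

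To promote this rigidity into (R), freeze any three of the six frequencies; the remaining three are trapped in an \(O(\alpha)\)-tube around a codimension-\(3\) algebraic subset of \((\mathbb R^{3})^{3}\). Using Lemma~\ref{lem:AD-colouring} to colour active caps into \(O(D)\) classes, together with the global cap count \(\#\Theta\asymp\lambda^{4/3}\) from~\eqref{eq:num-caps}, a pigeonhole over centres compatible with this rigidity forces more than \(c_*D\) of them into a single \(\alpha\)-cap, triggering (R).

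\paragraph{Main obstacle.} The delicate point is ruling out a ``fake'' cancellation between the linear and quadratic terms of \(\mu_6(\xi)\) in the split configuration. The cleanest resolution is to decompose \(\sum_m\sigma_m v_m\) into its radial and tangential components relative to \(\xi^{\star}\) and bound each separately using the split structure: a genuine split guarantees that at least one of these components is of size \(\gtrsim\alpha\) and cannot be cancelled by the \(O(\alpha^2)\) quadratic piece. The compatibility among the constants \(c\), \(c_*\), and \(C_1\) then reduces to a single scalar check against the \(D\)-colouring budget of Lemma~\ref{lem:AD-colouring}; no further interaction with \(Q_\lambda\) or with the trilinear insertion of §\ref{subsec:bct-insert} is needed.
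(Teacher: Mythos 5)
The central difficulty of this lemma is the \emph{balanced} (paired) split, and your proposal does not handle it. Your key claim is that for any non-trivial clustering of the six frequencies into separated groups, the signed centroid sum $\sum_m\sigma_m v_m$ has size $\gtrsim\alpha$ (so the linear term dominates and $\mu_6$ is forced large). This is false. Take the three clusters to be $\{\xi_1,\xi_4\}$, $\{\xi_2,\xi_5\}$, $\{\xi_3,\xi_6\}$, i.e.\ each cluster pairs one ``$+$'' index with one ``$-$'' index: then $\sum_m\sigma_m v_m=(v_1-v_4)+(v_2-v_5)+(v_3-v_6)=O(C_1\alpha)$ with no lower bound, the quadratic piece is likewise $O(C_1^2\alpha^2)$, and $\mu_6$ can indeed be tiny. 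The same phenomenon appears already in a two-cluster split: writing $a$, $b$ for the number of plus and minus signs landing in cluster $A$, the coefficient is $(a-b)$, not $|2a-3|$, and $a=b$ is perfectly possible (e.g.\ $A=\{\xi_1,\xi_4\}$, $B$ the rest). So ``non-trivial split $\Rightarrow$ large signed sum'' fails exactly in the configurations the lemma has to confront.

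This is precisely where the paper's argument diverges from yours. The paper calls this the paired case (the alternative (P) of the transverse dichotomy in \S\ref{subsec:gradients}) and disposes of it not by bounding $\sum_m\sigma_m v_m$ from below, but by an algebraic rigidity argument: it first extracts the radial near-pairing $\bigl||\xi_m|-|\xi_{\pi(m)}|\bigr|\lesssim\mu_6/\lambda$ from the sign splitting, and then, if the directions were distributed into three pairwise-separated $O(\alpha)$-clusters, it invokes the bilipschitz map $\xi\mapsto n(\xi)$ of \S\ref{subsec:lip} together with the $4\times4$ minor and Gram-matrix estimates of Appendix \ref{app:A5}--\ref{app:A6} to show the mixed minors become incompatibly small relative to the principal ones, yielding a contradiction with $\mu_6<c\lambda^{1/2}$. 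Your proof plan never touches these geometric lemmas, and that is not optional: without the Gram/minor machinery the balanced case is simply open.

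Two further remarks. First, the codimension-$3$ resonant-variety and freeze-three-frequencies pigeonhole you invoke to reach (R) is not well posed: branch (R) is a statement about the global density of the family of \emph{active caps} in a single $\alpha$-cap, whereas your argument stays at the level of a single sextuple $\xi\in\Theta^6$, and the reference to Lemma~\ref{lem:AD-colouring} and \eqref{eq:num-caps} is a colouring/cardinality fact that by itself does not produce a $>c_*D$ concentration. Second, the contrapositive framing is a reasonable cosmetic change, but the actual logical work (eliminating the balanced, separated configurations) is identical to what the paper does directly, so it does not buy a genuinely different route.
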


\begin{proof}[Idea of the proof]
From $\mu_6< c\,\lambda^{1/2}$ and the sign splitting we get pairwise radial proximity
$\bigl||\xi_m|-|\xi_{\pi(m)}|\bigr|\lesssim \mu_6/\lambda$ for some permutation $\pi\in S_3$.
Assume the six directions split into three pairwise separated $O(\alpha)$–clusters.
Then, using the bilipschitzness of $\xi\mapsto n(\xi)$ (see §\,\ref{subsec:lip}) and the estimates for
$4\times4$ minors/Gram matrices (App.\,\ref{app:A5}–\ref{app:A6}), we obtain a contradiction:
the mixed minors become too small relative to the principal ones when $\mu_6$ is small.
Hence either there is high angular density (branch (R)),
or at least five directions lie in a single $O(\alpha)$–cluster (branch (N)).
\end{proof}

\noindent\emph{Detail for the previous proof (radial proximity).}
Since $|\xi_m|,|\xi_{\pi(m)}|\sim\lambda$, we have
\[
\bigl||\xi_m|^2-|\xi_{\pi(m)}|^2\bigr|
=\bigl||\xi_m|-|\xi_{\pi(m)}|\bigr|\cdot\bigl(|\xi_m|+|\xi_{\pi(m)}|\bigr)
\ \ge\ \tfrac{\lambda}{2}\,\bigl||\xi_m|-|\xi_{\pi(m)}|\bigr|.
\]
Splitting $\sum_{m\le3}|\xi_m|^2-\sum_{m>3}|\xi_m|^2$ by signs and pairing the summands yields a permutation $\pi\in S_3$ such that
\[
\bigl||\xi_m|-|\xi_{\pi(m)}|\bigr|\ \lesssim\ \mu_6/\lambda,\qquad m=1,2,3.
\]
This “almost radial” tying triggers the (R)/(N) fork when $\mu_6$ is small.

\medskip
Next we estimate the contribution $K_{\ge}$ (the integral over $\mathcal B_{\ge}$). The contribution of the almost-resonant basket $K_{<}$
is absorbed by the blocks of \S\ref{sec:kakeya} (Robust Kakeya) and \S\ref{sec:narrow} (narrow cascade), used mutually exclusively.
On the support of the integral we take
\begin{equation}
\label{eq:phi-t-derivative}
\boxed{\ |\partial_t \Phi| \ \gtrsim\ \lambda^{1/2}\ }\quad(\xi\in\mathcal B_{\ge}).
\end{equation}

\noindent\textbf{Convention (IBP $\Rightarrow$ only on $\mathcal B_{\ge}$).}
Throughout §\ref{sec:kernel} integrations by parts in $t$ and in $x'$ are performed \emph{on the basket $\mathcal B_{\ge}$}.
The basket $\mathcal B_{<}$ is entirely handled by the blocks of §\ref{sec:kakeya} and §\ref{sec:narrow}.

\paragraph{Transverse dichotomy on \texorpdfstring{$\mathcal{B}_{\ge}$}{B\_ge}.}
Let $u_m := \xi'_m / |\xi_m|$ be the directions, and let $\Phi_6$ be the six–frequency phase of the kernel. 
On the basket $\mathcal{B}_{\ge}$ the following alternative holds:

\smallskip
\emph{(T) Transversal case:} there exists $c_1 = c_1(c_0) > 0$ such that
\begin{equation}
\label{eq:grad-xp}
   |\nabla_{x'}\Phi_6|
   = \Bigl|\sum_{m\le 3} \xi'_m - \sum_{m>3} \xi'_m\Bigr|
   \ \ge\ c_1\,\lambda\,\alpha
   = c_0 \lambda^{1/3} D^{1/2} \,=\, c_0 \lambda^{3/8}.
\end{equation}
In this case in the kernel block (\S\ref{sec:kernel}) six integrations by parts in $x'$ are applicable 
(see~\S\ref{subsec:IBP}).

\emph{(P) Paired case:} there is a permutation $\pi \in S_3$ with
\[
   \angle(u_m,\,u_{\pi(m)})\ \le\ C\,\alpha,
   \qquad
   \bigl||\xi_m| - |\xi_{\pi(m)}|\bigr| \ \lesssim\  \mu_6/\lambda,
   \qquad m=1,2,3.
\]
Then each of the three pairs lies in an $O(\alpha)$–cluster; in total the six frequencies
belong to the union of three $O(\alpha)$–classes. In the sum over active caps this leads either
to high angular density in a single $\alpha$–cap (Robust–Kakeya block, §\,\ref{subsec:kakeya-robust}),
or to the \emph{narrow} situation (at least five out of six in one $O(\alpha)$–cluster, §\,\ref{sec:narrow}).
These regimes are used mutually exclusively with the kernel block (§\,\ref{sec:kernel}).

\medskip
\noindent\textbf{Remark (transverse gradients in $TT^*$).}
For the phase difference $\Psi(t,x;s,y;\xi)=\Phi(t,x;\xi)-\Phi(s,y;\xi)$
\[
\boxed{\ \nabla_y\Psi(t,x;s,y;\xi)\ =\ -\,\nabla_{x'}\Phi(s,y;\xi)\ },
\]
hence on $\mathcal B_{\ge}$ from \eqref{eq:grad-xp} we obtain
\begin{equation}\label{eq:grad-y-psi}
|\nabla_y\Psi|\ \gtrsim\ \lambda\alpha\qquad(\text{and similarly for }y'),
\end{equation}
which is precisely what is used for four transverse IBPs at the $TT^*$ stage (§\,\ref{subsec:measures}).

\subsection{IBP operators}\label{subsec:IBP}

\paragraph{In time.}
Introduce the “structural” operator
\[
   L_t^{\mathrm{std}}
   := \frac{1}{i\,\partial_t\Phi}\,\partial_t,
   \qquad
   \bigl(L_t^{\mathrm{std}}\bigr)^{\!*} e^{\,i\Phi}=e^{\,i\Phi}.
\]
On the basket $\mathcal B_{\ge}$ we have $|\partial_t\Phi|\gtrsim \lambda^{1/2}$ (see~\eqref{eq:phi-t-derivative}),
and $\partial_t^{\,k}\omega(t)=O(\lambda^{3k/2})$. Therefore \emph{one} step of IBP with $L_t^{\mathrm{std}}$
produces a factor $\lambda^{+1}$ inside the integral. To fix the “scaling part,” we work
with the \emph{normalized} operator
\[
   L_t\ :=\ \lambda^{-1/2}\,L_t^{\mathrm{std}},
\]
so that one step of $L_t$ gives \(\lambda^{+1/2}\), and six steps — \(\lambda^{+3}\). This is just bookkeeping:
the identity $\bigl(L_t^{\mathrm{std}}\bigr)^{\!*}e^{i\Phi}=e^{i\Phi}$ remains valid, while the compensating physical
Jacobian appears at the Schur step (see~\eqref{eq:scale-jac} and §\ref{subsec:measures}).

\paragraph{In the transverse coordinates \(x'=(x_2,x_3)\).}
Set
\[
   L_{x'}\ :=\ |\nabla_{x'}\Phi|^{-2}\,\nabla_{x'}\Phi\cdot i\nabla_{x'},
   \qquad
   L_{x'}^{\!*} e^{\,i\Phi}=e^{\,i\Phi}.
\]
In the broad regime from~\eqref{eq:grad-xp} we have
\(|\nabla_{x'}\Phi|\gtrsim \lambda\alpha = c_0\,\lambda^{1/3}D^{1/2}\),
hence \(\|L_{x'}\|\lesssim \lambda^{-1/3}D^{-1/2}\).
Six integrations by parts in \(x'\) give
\[
   \bigl\|L_{x'}^{\,6}\bigr\|\ \lesssim\ (\lambda^{-1/3}D^{-1/2})^{6}
   \ =\ \lambda^{-2}D^{-3}.
\]

\begin{remark}[On $x$–dependence]
In the main text the amplitude \(a(t,\xi)\) does not depend on \(x\), and IBP in \(x'\) is realized
at the stage of integration in \((t,x)\) (Schur/TT$^\ast$): derivatives are transferred to the second
copy of the kernel or to the physical window. If desired, one may explicitly introduce a smooth
window \(\chi(x/\lambda^{1/2})\) (localization on \(Q_\lambda\)); then the action of \(L_{x'}\) on \(\chi\)
produces an \emph{additional} factor \(\lambda^{-1/2}\) on top of the basic \((\lambda\alpha)^{-1}\),
which only improves the $\lambda$–exponent (see App.~\ref{app:damp}).
\end{remark}

\noindent\emph{Edge layers of the window.}
Split the integral in $t$ into the plateau $\{|\partial_t\omega|=0\}$ and two edges where 
$|\{t:\partial_t\omega\ne0\}|\lesssim \lambda^{-3/2}$ (see Remark~\ref{rem:time-plateau}). 
On the plateau perform 6 IBPs in $t$ as in §\ref{subsec:IBP}. 
On the edges we do not integrate by parts: we estimate the contribution directly, 
using only $|\partial_t^k\omega|\lesssim \lambda^{3k/2}$ and the small measure of the edges; 
after 6 IBPs in $x'$ it falls under the same scale $\lambda^{-2}D^{-3}$ as the plateau contribution. 
Thus the “raw” Schur remains within \eqref{eq:Schur-raw}.

\subsection{Separation of measures: Schur + TT$^\ast$}\label{subsec:measures}
After six integrations by parts in $t$ and six in $x'$ (see §\ref{subsec:IBP})
we obtain the modified kernel $K^\#$. Passing to dimensionless variables
\[
   t=\lambda^{-3/2}\tau,\qquad x=\lambda^{-1/2}\zeta
\]
gives
\begin{equation}\label{eq:scale-jac}
   dt\,dx \;=\; \lambda^{-3}\,d\tau\,d\zeta, \qquad |Q_\lambda|\simeq \lambda^{-3}.
\end{equation}
In the Schur step \emph{only} the physical Jacobian \eqref{eq:scale-jac}
appears (angular/radial volumes are already accounted for in the measure $d\xi$),
while the factors from 12 IBPs
(six in $t$ and six in $x'$) are already \emph{built into the amplitude} of the modified kernel $K^\#$.

Combining the growth from $6L_t$ (see \eqref{eq:phi-t-derivative}) and the gain from $6L_{x'}$
(see \eqref{eq:grad-xp}), we obtain the “raw” Schur bound:
\begin{equation}\label{eq:Schur-raw}
  \|K\|_{2\to2}\ \lesssim\
  \underbrace{\lambda^{+3}}_{\text{$6L_t$}}\cdot
  \underbrace{\lambda^{-2}\,D^{-3}}_{\text{$6L_{x'}$}}\cdot
  \underbrace{\lambda^{-3}}_{\text{phys.\ Jacobian}}
  \;=\; \lambda^{-2}\,D^{-3}.
\end{equation}

\medskip
\noindent\textit{Factorization.}
After six integrations in $t$ and six in $x'$ the kernel can be written as
\[
   K^\#(t,x;s,y) \;=\; B(t,x;s,y) \cdot K_{\mathrm{phase}}(t,x;s,y),
\]
with $\|B\|_{L^\infty} \lesssim \lambda^{-2} D^{-3}$.
To the kernel $K_{\mathrm{phase}}$ with unit amplitude we apply the $TT^\ast$ step,
giving $\|T_{\mathrm{phase}}\|_{2\to 2} \lesssim \lambda^{-5/2}$.
Therefore,
\[
   \|K\|_{2\to 2} \;\le\; \|B\|_{L^\infty} \cdot \|T_{\mathrm{phase}}\|_{2\to 2}
   \ \lesssim\ \lambda^{-9/2} D^{-3}.
\]
Thus the final bound \eqref{eq:K-final} is obtained as
the product of the amplitude gain and the $TT^\ast$ contribution for the phase part.

\begin{lemma}[TT$^\ast$ with a smooth amplitude]\label{lem:TTstar-ampl}
Let the kernel be $K^\#(t,x;s,y)=B(t,x;s,y)\,K_{\mathrm{phase}}(t,x;s,y)$, where
\[
\sup_{|\beta|\le 2}\|\partial_{(t,x;s,y)}^\beta B\|_{L^\infty}\ \lesssim\ \lambda^{-2}D^{-3}.
\]
Then the operator $T$ with kernel $K^\#$ satisfies
\[
\|T\|_{2\to2}\ \lesssim\ \lambda^{-2}D^{-3}\cdot \|T_{\mathrm{phase}}\|_{2\to2}.
\]
In particular, if under the conditions of §\ref{subsec:measures} one has 
$\|T_{\mathrm{phase}}\|_{2\to2}\lesssim \lambda^{-5/2}$, then
$\|T\|_{2\to2}\lesssim \lambda^{-9/2}D^{-3}$.
\end{lemma}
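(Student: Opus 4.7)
The plan is to extract the scaling bound $\lambda^{-2}D^{-3}$ from the amplitude and to decouple the residual smooth factor $\tilde B := \lambda^{2}D^{3}\,B$ from the phase kernel $K_{\mathrm{phase}}$ by a Fourier expansion in the joint variables $(t,x;s,y)$. The underlying mechanism is that multiplication by a unimodular plane wave is an $L^{2}$--isometry, so that expanding $\tilde B$ into plane waves turns $T$ into an integral of modulated copies of $T_{\mathrm{phase}}$ whose norms the triangle inequality controls term by term.

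First I would observe that $\tilde B$ and its derivatives up to order $2$ are uniformly $O(1)$ by hypothesis. Using the compact support of $K^{\#}$ inside $Q_{\lambda}\times Q_{\lambda}$, I would multiply $\tilde B$ by a smooth cutoff equal to $1$ on that set (this leaves $T$ unchanged), so that $\tilde B$ is compactly supported with a uniform $C^{N}$ bound. In the actual construction from \S\ref{subsec:IBP}, $B$ is in fact $C^{\infty}$ with all derivatives uniformly $O(\lambda^{-2}D^{-3})$, because $\omega,\vartheta$ are smooth and the IBP denominators $(\partial_{t}\Phi)^{-1}$, $|\nabla_{x'}\Phi|^{-2}$ are smooth on $\mathcal B_{\ge}$; so the nominal hypothesis $|\beta|\le 2$ is only a convenient minimum.

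Second I would Fourier expand
\[
  \tilde B(t,x;s,y)
  \;=\; \int \hat{\tilde B}(\eta_{1},\eta_{2})\,
        e^{2\pi i\eta_{1}\cdot(t,x)}\,
        e^{2\pi i\eta_{2}\cdot(s,y)}\,d\eta_{1}\,d\eta_{2}.
\]
Writing $U_{\eta}$ for multiplication by $e^{2\pi i\eta\cdot z}$ (a unitary operator on $L^{2}$) and substituting into the kernel,
\[
  T \;=\; \lambda^{-2}D^{-3}\int \hat{\tilde B}(\eta_{1},\eta_{2})\,
          U_{\eta_{1}}\,T_{\mathrm{phase}}\,U_{\eta_{2}}\,
          d\eta_{1}\,d\eta_{2},
\]
so that the triangle inequality gives
\[
  \|T\|_{2\to 2}
  \;\le\; \lambda^{-2}D^{-3}\,\|\hat{\tilde B}\|_{L^{1}}\,\|T_{\mathrm{phase}}\|_{2\to 2}.
\]
The standard $N$-fold IBP estimate $|\hat{\tilde B}(\eta)|\lesssim_{N}(1+|\eta|)^{-N}$ then yields $\|\hat{\tilde B}\|_{L^{1}}\lesssim 1$ once $N>8$, and the ``in particular'' clause follows by plugging in $\|T_{\mathrm{phase}}\|_{2\to 2}\lesssim\lambda^{-5/2}$ from \S\ref{subsec:measures}.

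The main obstacle I foresee is the tension between the nominal regularity $|\beta|\le 2$ and the $8$-dimensional integrability needed for $\hat{\tilde B}\in L^{1}$: literally two derivatives do not suffice. Two clean resolutions are available. Either one appeals to the $C^{\infty}$ character of $B$ inherited from the smooth cutoffs and IBP denominators of \S\ref{subsec:IBP} (the easy route), or one keeps only $C^{2}$ regularity and replaces the Fourier integral by a Fourier series on a dimensionless unit box (after the rescaling \eqref{eq:scale-jac}), combining Plancherel $\sum_{k}|c_{k}|^{2}\lesssim\|\tilde B\|_{2}^{2}\lesssim 1$ with a Cotlar--Stein almost-orthogonality step applied to the modulated operators $U_{k_{1}}T_{\mathrm{phase}}U_{k_{2}}$. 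Either route completes the proof without affecting the stated exponents.
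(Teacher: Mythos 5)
Your proposal takes a genuinely different route from the paper, and it contains a real gap that your own two suggested workarounds do not close.

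The paper's proof is structural and parsimonious with derivatives: it re-opens the $TT^{\ast}$ integration by parts from \S\ref{subsec:measures} with the factor $B$ present, and tracks \emph{which} derivatives can possibly hit $B$. Because the amplitude $a(t,\xi)$ has no $x$-dependence, $B$ is independent of $y,y'$, so the four transverse IBPs never touch it; and because only one IBP is performed in each of $s,s'$, at most two derivatives land on $B$. That is exactly why the hypothesis stops at $|\beta|\le 2$ — it is tailored so that every term produced by the IBP is controlled by the same constant, after which a Schur/Cauchy--Schwarz kernel bound closes the argument.

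Your route is the classical modulation trick: extract $\lambda^{-2}D^{-3}$, Fourier-expand $\tilde B$ into plane waves, write $T$ as a superposition of unitary conjugates of $T_{\mathrm{phase}}$, and apply the triangle inequality. In principle this is clean and works for any $T_{\mathrm{phase}}$, but it demands $\hat{\tilde B}\in L^{1}$, and you rightly observe that $|\beta|\le 2$ cannot deliver this in the joint variables. The two fixes you offer do not repair the gap under the stated hypotheses.
First, the ``easy route'' of appealing to $C^{\infty}$ regularity of $B$ from \S\ref{subsec:IBP} silently replaces the lemma's assumption by a stronger one, and even then the required uniform bound fails: $\omega(t)=\chi(\lambda^{3/2}t)$ has $\partial_t^{k}\omega\sim\lambda^{3k/2}$, so the higher $t,s$ derivatives of $B$ grow by $\lambda^{3/2}$ per order rather than staying $O(\lambda^{-2}D^{-3})$. (One could recover uniformity by working in the rescaled variables $\tau=\lambda^{3/2}t,\ \sigma=\lambda^{3/2}s$, but then one is no longer proving the lemma as stated; moreover one needs to check that the modulations remain innocuous after that rescaling.)
Second, the Fourier-series\,+\,Cotlar--Stein route does not close on its own: with only $\sum_k|c_k|^2\lesssim 1$ and the crude bound $\|T_{k}^{\ast}T_{l}\|\le |c_k||c_l|\,\|T_{\mathrm{phase}}\|^2$, Cotlar--Stein requires $\sum_k|c_k|^{1/2}<\infty$, which is strictly stronger than $\ell^2$ and again needs more derivatives; to do better one would have to establish genuine almost-orthogonality of $U_{k_1}T_{\mathrm{phase}}U_{k_2}$, which is an additional (unproved) input about the phase kernel.

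So while your mechanism is standard and attractive, as written it does not prove the lemma from the hypothesis $\sup_{|\beta|\le2}\|\partial^\beta B\|\lesssim\lambda^{-2}D^{-3}$. To make your route rigorous you would need either to strengthen the hypothesis to higher-order control of $B$ in (rescaled) variables, or to import the structural facts the paper uses ($B$ independent of $y,y'$, so the Fourier expansion is only in $(t,s)$, which is close to — but still slightly above — the $C^2$ threshold) together with a quantitative almost-orthogonality estimate for the modulated phase operator.
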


\begin{proof}
Consider $(T T^\ast)f$ and integrate in $(s,y),(s',y')$ with integrations by parts in $s,s'$ and in the transverse variables, 
as in §\ref{subsec:measures}. Derivatives may also land on $B$, but by assumption all 
$\partial^\beta B$ with $|\beta|\le2$ remain $\lesssim \lambda^{-2}D^{-3}$, so each term in the 
expansion of $T T^\ast$ is dominated by the same constant. Note that with the chosen amplitude $a(t,\xi)$ (no $x$/$y$ dependence) the factor $B(t,x;s,y)$ is independent of $y,y'$, and in $s,s'$ we perform only one IBP each; hence at most two time derivatives hit $B$, and the control $\sup_{|\beta|\le 2}\|\partial^\beta B\|_\infty\lesssim \lambda^{-2}D^{-3}$ suffices. Separately we apply integration by parts to the phase component, giving $\|T_{\mathrm{phase}}\|_{2\to2}\lesssim \lambda^{-5/2}$ (see \eqref{eq:TTstar}). 
A standard Cauchy–Schwarz argument on kernels yields the stated inequality.
\end{proof}

\paragraph{TT$^\ast$ bound after 12 IBP.}
By the convention of §\ref{subsec:gradients}, all IBPs are performed on the basket $\mathcal B_{\ge}$ (see \eqref{eq:phi-t-derivative}, \eqref{eq:grad-xp}).
\noindent
Note that $TT^\ast$ is applied to the \emph{modified} kernel $K^\#$,
obtained \emph{after} 12 integrations by parts (six in $t$ and six in $x'$) and after accounting for
the physical Jacobian \eqref{eq:scale-jac}. Thus the factor $\lambda^{-5/2}$ from the $TT^\ast$ step
complements the amplitude gain $\lambda^{-2}D^{-3}$ recorded in \eqref{eq:Schur-raw};
we do not multiply independent upper bounds for the same operator.
Let $T$ be the operator with kernel $K^\#$, obtained after six IBPs in $t$ 
(producing $\lambda^{+3}$, see~\eqref{eq:phi-t-derivative}) and six IBPs in $x'$ 
(producing $\lambda^{-2}D^{-3}$, see~\eqref{eq:grad-xp}), as well as accounting for the physical Jacobian 
$\lambda^{-3}$ from \eqref{eq:scale-jac}. These factors are part of the amplitude of $K^\#$.

We estimate $\|T\|_{2\to2}^2=\|TT^\ast\|_{2\to2}$. 
On the plateau in $s,s'$ (see Remark~\ref{rem:time-plateau}) one IBP in each of these variables 
gives $\lambda^{-2}$ at the level of the squared norm. 
Then, in two independent transverse directions in $y$ and two in $y'$, we perform 
four more IBPs, giving $(\lambda\alpha)^{-8}=\lambda^{-3}$ at the level of the square.
Thus,
\[
  \|T\|_{2\to2}^2 \ \lesssim\ 
  \underbrace{\lambda^{+3} \cdot \lambda^{-2} D^{-3} \cdot \lambda^{-3}}_{\text{12 IBP + Jacobian}}
  \cdot
  \underbrace{\lambda^{-2} \cdot \lambda^{-3}}_{\substack{\text{IBP in $s,s'$}\\\text{and in transverse}}}
  \cdot O(1).
\]

\noindent\textit{Vector fields in $TT^\ast$.}
Set
\[
L_s:=\frac{1}{i\,\partial_s\Psi}\,\partial_s,\quad 
L_{s'}:=\frac{1}{-i\,\partial_{s'}\Psi}\,\partial_{s'},\quad
L_y:=\frac{\nabla_y\Psi}{i\,|\nabla_y\Psi|^2}\cdot\nabla_y,\quad
L_{y'}:=\frac{-\nabla_{y'}\Psi}{i\,|\nabla_{y'}\Psi|^2}\cdot\nabla_{y'}.
\]

\noindent\emph{Relation of transverse gradients.}
Since $\Psi(t,x;s,y;\xi)=\Phi(t,x;\xi)-\Phi(s,y;\xi)$, we have
\[
\nabla_y\Psi(t,x;s,y;\xi)=-\nabla_{x'}\Phi(s,y;\xi),
\]
hence on the basket $\mathcal B_{\ge}$ from \eqref{eq:grad-xp} we obtain
\[
|\nabla_y\Psi|\ \gtrsim\ \lambda\alpha\qquad(\text{and similarly for }y').
\]

Then $L_s^\ast e^{i\Psi}=e^{i\Psi}$ and so on. One IBP in $s$ and in $s'$ gives $\lambda^{-2}$ 
at the level of the squared norm, and two independent IBPs in $y$ and in $y'$ give $(\lambda\alpha)^{-8}=\lambda^{-3}$ 
at the level of the square (see \eqref{eq:phi-t-derivative}–\eqref{eq:grad-xp}), hence \(\lambda^{-5}\) on the square 
and \(\lambda^{-5/2}\) after taking the square root; see \eqref{eq:TTstar}.

\begin{remark}[On the independence of time integrations]
In the Schur step we used $6$ integrations in $t$ in the kernel $K$ (variable $t$).
At the $TT^{*}$ stage we perform one IBP in each of $s$ and $s'$ for the 
modified kernel $K^{\#}$; these differentiations are independent of $t$ 
and act at a different level of the quadratic form.
Thus the “time gain” is not counted twice.
This is consistent with the extraction of the $\lambda^{-5/2}$ factor in the $TT^{*}$ step.
\end{remark}

After the $TT^\ast$ step we obtain
\begin{equation}\label{eq:TTstar}
  \|T\|_{2\to 2} \ \lesssim\ \lambda^{-5/2}.
\end{equation}

Combining \eqref{eq:Schur-raw} and \eqref{eq:TTstar}, we conclude
\begin{equation}\label{eq:K-final}
  \|K\|_{2\to2}\ \lesssim\
  \underbrace{\lambda^{-2}D^{-3}}_{\text{Schur after 12 IBP + physical Jacobian}}
  \cdot
  \underbrace{\lambda^{-5/2}}_{\text{$TT^\ast$ contribution}}
  \;=\; \lambda^{-9/2}D^{-3}.
\end{equation}

\begin{remark}[On the independence of transverse IBP]
The six IBPs in $x'$ for the kernel $K$ are performed before forming $K^\#$ and act on the variables $(t,x)$.
The four IBPs in $TT^\ast$ are in the transverse variables $y,y'$ of the other copy of the kernel. 
Therefore the transverse gain in $TT^\ast$ is not counted twice.
\end{remark}

\begin{remark}[Where exactly the factor $\lambda^{-5/2}$ comes from]\label{rem:l-52}
One IBP in each of $s,s'$ gives a factor $\lambda^{-2}$ \emph{at the level of the square} (after taking the square root — $\lambda^{-1}$),
and four transverse IBPs in total give $(\lambda\alpha)^{-8}=\lambda^{-3}$ \emph{at the level of the square}
(after the square root — $\lambda^{-3/2}$). Thus $\lambda^{-1}\cdot\lambda^{-3/2}=\lambda^{-5/2}$; see also
\eqref{eq:phi-t-derivative}–\eqref{eq:grad-xp}.
\end{remark}

\begin{remark}[Separation of regimes]\label{rem:regimes}
The almost-resonant basket $\mathcal B_{<}$ is handled by the blocks of §\ref{sec:kakeya} and §\ref{sec:narrow},
which are used \emph{mutually exclusively}; the estimate \eqref{eq:K-final} applies only on $\mathcal B_{\ge}$.
\end{remark}

\subsection{Kernel: final bound}\label{subsec:kernel-final}
\begin{proposition}\label{prop:kernel-final}
For $\lambda\ge 2$ one has
\[
   \|K\|_{L^{2}\to L^{2}} \;\lesssim\; \lambda^{-9/2}\,D^{-3},\qquad D=\lambda^{1/12}.
\]
\end{proposition}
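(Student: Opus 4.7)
The plan is to assemble the bound \eqref{eq:K-final} by stacking the three blocks already prepared in §\ref{sec:kernel}: the frequency dichotomy of §\ref{subsec:gradients}, the twelve integrations by parts of §\ref{subsec:IBP}, and the Schur/$TT^{*}$ argument of §\ref{subsec:measures}, invoking the amplitude lemma~\ref{lem:TTstar-ampl} as the bridge between them. First I would split the $\xi$-integration in $K$ into the non-resonant basket $\mathcal B_{\ge}=\{\mu_6\ge c\lambda^{1/2}\}$ and the near-resonant basket $\mathcal B_{<}$. By Lemma~\ref{lem:near-res-dichotomy} the contribution of $\mathcal B_{<}$ is absorbed, in a mutually exclusive way, by the Robust--Kakeya block (§\ref{sec:kakeya}) or by the narrow cascade (§\ref{sec:narrow}); these regimes are disjoint and do not need to be reconciled with \eqref{eq:K-final}. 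So it suffices to bound the contribution $K_{\ge}$ coming from $\mathcal B_{\ge}$.

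On $\mathcal B_{\ge}$ I would then execute six integrations by parts in $t$ with the normalized operator $L_t=\lambda^{-1/2}L_t^{\mathrm{std}}$ (each step produces $\lambda^{+1/2}$ via \eqref{eq:phi-t-derivative} and $\partial_t^k\omega=O(\lambda^{3k/2})$; the plateau/edge split from Remark~\ref{rem:time-plateau} ensures no derivative-of-window penalty, and the edges are handled by the small-measure bookkeeping noted in §\ref{subsec:IBP}), followed by six integrations by parts in $x'$ with $L_{x'}$, which under \eqref{eq:grad-xp} provides $(\lambda^{-1/3}D^{-1/2})^{6}=\lambda^{-2}D^{-3}$. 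Combining these with the physical Jacobian $dt\,dx=\lambda^{-3}d\tau\,d\zeta$ from \eqref{eq:scale-jac} gives the raw Schur bound \eqref{eq:Schur-raw}: $\|K\|_{2\to 2}\lesssim \lambda^{-2}D^{-3}$. At this stage the modified kernel factors as $K^{\#}=B\cdot K_{\mathrm{phase}}$ with $\|\partial^{\beta}B\|_{\infty}\lesssim \lambda^{-2}D^{-3}$ for $|\beta|\le 2$, which is the hypothesis of Lemma~\ref{lem:TTstar-ampl}.

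Finally, I would apply Lemma~\ref{lem:TTstar-ampl} to upgrade the bound using the $TT^{*}$ estimate for the phase part. With $\Psi=\Phi(t,x;\xi)-\Phi(s,y;\xi)$, one IBP in each of $s,s'$ (on the plateau) and four transverse IBPs in $y,y'$ using the identity $\nabla_y\Psi=-\nabla_{x'}\Phi$ from \eqref{eq:grad-y-psi} give, at the level of the squared norm, a factor $\lambda^{-2}\cdot(\lambda\alpha)^{-8}=\lambda^{-5}$, i.e. $\|T_{\mathrm{phase}}\|_{2\to 2}\lesssim \lambda^{-5/2}$ as in \eqref{eq:TTstar} and Remark~\ref{rem:l-52}. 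Multiplying the amplitude gain $\lambda^{-2}D^{-3}$ by $\lambda^{-5/2}$ yields $\lambda^{-9/2}D^{-3}$.

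The delicate points I expect to be the main obstacles are exactly the book-keeping ones highlighted in §\ref{subsec:measures}: first, ensuring that the six transverse IBPs in $x'$ (used to build $K^{\#}$) and the four in $y,y'$ (used inside $TT^{*}$) act on disjoint copies of the kernel and are therefore not double-counted, which is the content of the remark on independence of transverse IBP; second, verifying that the two time derivatives that can land on $B$ in the expansion of $TT^{*}K^{\#}$ do not degrade the $\lambda^{-2}D^{-3}$ amplitude bound, which is exactly what the hypothesis $\sup_{|\beta|\le 2}\|\partial^{\beta}B\|_{\infty}\lesssim \lambda^{-2}D^{-3}$ in Lemma~\ref{lem:TTstar-ampl} encodes; and third, confirming that the convention \textquotedblleft IBP only on $\mathcal B_{\ge}$\textquotedblright\ is compatible with the partition of unity between $\mathcal B_{\ge}$ and $\mathcal B_{<}$, so that the exclusion of $\mathcal B_{<}$ is indeed paid for once, by §\ref{sec:kakeya} or §\ref{sec:narrow}, and not by the Schur/$TT^{*}$ chain.
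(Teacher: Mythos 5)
Your proposal follows exactly the same route as the paper: the $\mathcal B_{\ge}/\mathcal B_{<}$ split of §\ref{subsec:gradients}, the six temporal plus six transverse integrations by parts of §\ref{subsec:IBP} combined with the physical Jacobian to get the raw Schur bound \eqref{eq:Schur-raw}, and then Lemma~\ref{lem:TTstar-ampl} with the $TT^{*}$ refinement \eqref{eq:TTstar} to arrive at \eqref{eq:K-final}. The delicate points you flag (no double-counting of transverse IBPs, derivatives landing on $B$, and the single accounting of $\mathcal B_{<}$) are precisely the remarks the paper itself inserts in §\ref{subsec:measures}, so this is a faithful reconstruction of the paper's argument.
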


\newpage

\section{Robust~Kakeya}\label{sec:kakeya}

Throughout this section we fix
\[
   r=\lambda^{-2/3},\qquad
   D=\lambda^{1/12},\qquad
   \alpha:=c_{0}\,r\,D^{1/2}=c_{0}\,\lambda^{-5/8},
   \qquad c_{0}>0,\;\lambda\ge 2 .
\]
As before, the \emph{Robust\,Kakeya} block is activated only in the \emph{high angular density regime}
(see below), contributing \(+\tfrac{1}{12}\) in~\(\lambda\) and \(+1\) in~\(D\) to the balance (see \S\ref{subsec:kakeya-exponents}).

\subsection{High-density condition}\label{subsec:kakeya-density}
Let \(G\subset\{\Theta\}\) be a family of caps of radius \(r\) with centers \(|\xi_\Theta|\sim\lambda\).
Assume the \emph{uniform} density condition
\begin{equation}\label{eq:kakeya-density}
   \min_{\Theta\in G}
   \#\Bigl\{
      \Theta'\in G:\;
      \angle\bigl(\xi_{\Theta},\xi_{\Theta'}\bigr)\le\alpha
   \Bigr\}
   \;>\; c_{*}D .
\end{equation}
In other words, for \emph{each} \(\Theta\in G\) its \(\alpha\)–cap contains \(>c_*D\) other caps (the threshold is attainable,
since \(\alpha/r=D^{1/2}\)). This strengthening relative to the “\(\max\)” form is needed to apply
the Robust–Kakeya block and derive the estimate in \S\ref{subsec:kakeya-robust}: it guarantees high multiplicity
\(M(t,x)\gtrsim D\) on the entire union \(\bigcup_{\Theta\in G}T_\Theta\) up to a thin exceptional set; see §\ref{subsec:kakeya-tubes}–§\ref{subsec:kakeya-robust}. 
See also Appendix~\eqref{app:det}: the partition into \(O(D)\) classes (Lemma~\eqref{lem:AD-colouring})
and packing in angular annuli (Appendix~\eqref{app:A8}).

\subsection{Tubes and \emph{average} multiplicity}\label{subsec:kakeya-tubes}
Associate to each cap \(\Theta\) the \emph{parabolic tube}
\[
   \mathcal T_{\Theta}
   :=
   \Bigl\{(t,x)\in Q_{\lambda}:\ |x-2t\,\xi_{\Theta}|\le\rho,\ \ |t|\le\tfrac12\lambda^{-3/2}\Bigr\},
   \qquad
   \rho=\lambda^{-1/2},
\]
where the cylinder \(Q_{\lambda}\) is given in \eqref{eq:def-cylinder}. Define the overlap multiplicity by
\[
   M(t,x):=\sum_{\Theta\in G}\mathbf 1_{\mathcal T_{\Theta}}(t,x).
\]

\begin{lemma}[Averaged Robust Kakeya]\label{lem:kakeya-avg}
Assume the high-density condition \emph{\eqref{eq:kakeya-density}}. Then there exists an exceptional set
\(E\subset Q_\lambda\) such that
\[
   |E|\ \lesssim\ \lambda^{-5/8}\sum_{\Theta\in G}\!|\mathcal T_\Theta|,
\]
and for all \((t,x)\in \Bigl(\bigcup_{\Theta\in G}\mathcal T_\Theta\Bigr)\setminus E\) one has
\(M(t,x)\ \ge\ c\,D\) with an absolute constant \(c>0\) (depending only on \(c_0,c_\ast\)).
In particular,
\begin{equation}\label{eq:union-measure}
   \Bigl|\bigcup_{\Theta\in G}\mathcal T_\Theta\Bigr|
   \ \lesssim\ D^{-1}\!\sum_{\Theta\in G}\!|\mathcal T_\Theta|.
\end{equation}
\end{lemma}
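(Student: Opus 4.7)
The key idea is to leverage the uniform density condition \eqref{eq:kakeya-density} pointwise: whenever $(t,x)$ lies in some $\mathcal T_\Theta$, the $\alpha$-close tubes $\mathcal T_{\Theta'}$ should also contain $(t,x)$, except inside a thin boundary layer which will constitute the exceptional set $E$. The plan is to first quantify the axis displacement between $\alpha$-close tubes, then bound the volume of this boundary layer, then conclude $M\ge cD$ pointwise outside it, and finally integrate. A preliminary dyadic radial decomposition so that $|\xi_\Theta|\in[\lambda,2\lambda]$ ensures that angular proximity $\angle(\xi_\Theta,\xi_{\Theta'})\le\alpha$ actually gives Euclidean proximity $|\xi_\Theta-\xi_{\Theta'}|\lesssim\lambda\alpha$.

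The geometric input is that for any two such $\alpha$-close caps, the tube axes at time $|t|\le\tfrac12\lambda^{-3/2}$ differ by
\[
|2t(\xi_\Theta-\xi_{\Theta'})|\ \lesssim\ |t|\cdot\lambda\alpha\ \le\ \tfrac{c_0}{2}\,\lambda^{-9/8}.
\]
Comparing to the cross-section $\rho=\lambda^{-1/2}$, the displacement is smaller by a factor $\lambda^{-5/8}$, which exactly matches the claimed size of $E$. Accordingly, I would set
\[
E_\Theta\ :=\ \bigl\{(t,x)\in\mathcal T_\Theta:\ \rho-C_0\lambda^{-9/8}\le|x-2t\xi_\Theta|\le\rho\bigr\},\qquad E\ :=\ \bigcup_{\Theta\in G}E_\Theta,
\]
with $C_0$ chosen to dominate the displacement constant. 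Since each $E_\Theta$ is a shell of relative thickness $O(\lambda^{-9/8}/\rho)=O(\lambda^{-5/8})$ inside the 3D cross-section, $|E_\Theta|\lesssim\lambda^{-5/8}|\mathcal T_\Theta|$, and summing gives $|E|\lesssim\lambda^{-5/8}\sum_\Theta|\mathcal T_\Theta|$.

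For $(t,x)\in\mathcal T_\Theta\setminus E_\Theta$ one has $|x-2t\xi_\Theta|\le\rho-C_0\lambda^{-9/8}$, so for every $\Theta'\in G$ with $\angle(\xi_\Theta,\xi_{\Theta'})\le\alpha$ the triangle inequality yields
\[
|x-2t\xi_{\Theta'}|\ \le\ |x-2t\xi_\Theta|+2|t|\,|\xi_\Theta-\xi_{\Theta'}|\ \le\ \rho,
\]
i.e.\ $(t,x)\in\mathcal T_{\Theta'}$. By the uniform hypothesis \eqref{eq:kakeya-density}, the number of such $\Theta'$ exceeds $c_{*}D$, so $M(t,x)>c_{*}D=:cD$ on $\bigl(\bigcup_\Theta\mathcal T_\Theta\bigr)\setminus E$. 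Integrating $M$ on $Q_\lambda$ and using $\int M=\sum_\Theta|\mathcal T_\Theta|$ then gives $\bigl|\bigcup_\Theta\mathcal T_\Theta\bigr|\le|E|+(cD)^{-1}\sum_\Theta|\mathcal T_\Theta|$, and since $\lambda^{-5/8}\ll D^{-1}=\lambda^{-1/12}$, the $D^{-1}$ term dominates, yielding \eqref{eq:union-measure}.

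The main technical obstacle I anticipate is ensuring that angular proximity really produces Euclidean proximity of the right order, which forces either the dyadic radial decomposition mentioned above or a mild enlargement of the shell to absorb an $O(\alpha\cdot\text{radial spread})$ correction. A secondary but essential point is that \eqref{eq:kakeya-density} must be used in its uniform (min over $\Theta$) form rather than an averaged one: this is exactly what permits the pointwise conclusion $M\ge cD$ outside $E$ without invoking any further pigeonholing or layer-cake argument.
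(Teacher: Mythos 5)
Your proposal is correct and follows essentially the same route as the paper's sketch: both hinge on the observation that for $\alpha$-close caps the axis displacement $2|t|\,|\xi_\Theta-\xi_{\Theta'}|\lesssim\lambda^{-3/2}\cdot\lambda\alpha\sim\lambda^{-9/8}$ is a $\lambda^{-5/8}$-fraction of the cross-section $\rho=\lambda^{-1/2}$, take $E$ to be the resulting boundary shells, invoke the uniform density hypothesis pointwise on the core, and integrate $M$ to obtain~\eqref{eq:union-measure}. The only cosmetic difference is that you define $E$ via per-tube outer shells $E_\Theta$ while the paper phrases it as the boundary layer of the cluster of disk centers in each time slice; the radial-proximity caveat you raise is harmless here since the cap centers $\xi_\Theta$ in \S\ref{subsec:caps} all lie at comparable radius $|\xi_\Theta|\sim\lambda$, so $\angle(\xi_\Theta,\xi_{\Theta'})\le\alpha$ already yields $|\xi_\Theta-\xi_{\Theta'}|\lesssim\lambda\alpha$ without further decomposition.
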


\begin{proof}[Idea of the proof]
For fixed \(t\), the centers of the disks \(B\!\bigl(2t\,\xi_\Theta,\rho\bigr)\) for \(\Theta\) within one
\(\alpha\)–cap lie in a cluster of radius \(\lesssim 2|t|\,\lambda\alpha\).
Since \(2|t|\,\lambda\alpha\lesssim \lambda^{-9/8}\ll \rho=\lambda^{-1/2}\), these disks have a large
common \emph{core} in the slice \(t=\mathrm{const}\), through each point of which there pass \(\gtrsim D\) tubes
(i.e. \(M\gtrsim D\)). The boundary layer of the cluster has transverse thickness \(\lesssim 2|t|\,\lambda\alpha\),
and its relative measure \(\lesssim \lambda^{-5/8}\) is uniform for all admissible \(t\).
Integrating in \(t\) yields the required bounds for \(E\) and \eqref{eq:union-measure}.
\end{proof}

\paragraph{Remark on the sign of the phase.}
Throughout §4 we fix the phase \(x\!\cdot\!\xi - t|\xi|^2\) (a “minus” sign in front of \(t|\xi|^2\)),
so that the stationary set \(\nabla_\xi(x\!\cdot\!\xi - t|\xi|^2)=0\) coincides with
\(|x-2t\,\xi|\lesssim\rho\) in the tube definition. With the opposite sign one should read
\(|x+2t\,\xi|\lesssim\rho\) — all estimates transfer verbatim.

\subsection{Robust--Kakeya: $L^2$ estimate}\label{subsec:kakeya-robust}
Let $G\subset\{\Theta\}$ be a family of caps of radius $r=\lambda^{-2/3}$,
\emph{lying within one fixed $\alpha$–cap} (the dense one selected in Lemma~\ref{lem:kakeya-avg}). 
Then $\#G\sim (\alpha/r)^2\sim D$, and through each point $(t,x)$ there can pass at most one tube from each cap, so
\begin{equation}\label{eq:M-sup}
   \sup_{(t,x)\in Q_\lambda} M(t,x) \ \lesssim\ D,
\end{equation}
where
\[
   g(t,x) := \sum_{\Theta\in G}F_{\Theta}(t,x)\,\mathbf 1_{\mathcal T_\Theta}(t,x),
   \qquad
   M(t,x) := \sum_{\Theta\in G}\mathbf 1_{\mathcal T_\Theta}(t,x).
\]
By convention, set $M^{-1}(t,x):=0$ when $M(t,x)=0$.

For any complex numbers $a_1,\dots,a_m$ one has
$\big|\sum_{j=1}^{m} a_j\big|^{2}\le m\sum_{j=1}^{m}|a_j|^2$.
Applying this at a fixed point $(t,x)$ with $m=M(t,x)$ and
$a_j=F_{\Theta_j}(t,x)\,\mathbf 1_{\mathcal T_{\Theta_j}}(t,x)$, we obtain the pointwise estimate
\begin{equation}\label{eq:pointwise}
   \frac{\mathbf 1_{\{M\ge cD\}}(t,x)}{M(t,x)}\,|g(t,x)|^2
   \ \le\
   \sum_{\Theta\in G}|F_\Theta(t,x)|^2 \,\mathbf 1_{\mathcal T_\Theta}(t,x),
\end{equation}
where $c>0$ is the constant from Lemma~\ref{lem:kakeya-avg}.

\medskip
\noindent\textit{From pointwise to integral estimate.}
Multiply \eqref{eq:pointwise} by $M(t,x)$ and integrate over $(t,x)\in Q_\lambda$:
\[
   \int_{Q_\lambda} \mathbf 1_{\{M\ge cD\}}\,|g|^2
   \ \le\ 
   \sum_{\Theta\in G} \int_{Q_\lambda} M\,|F_\Theta|^2 \mathbf 1_{\mathcal T_\Theta}.
\]
From \eqref{eq:M-sup} we have $M(t,x)\lesssim D$ on all of $Q_\lambda$, hence
\begin{equation}\label{eq:kakeya-factor}
   \|g\cdot \mathbf 1_{\{M\ge cD\}}\|_{L^{2}(Q_{\lambda})}^{2}
   \ \lesssim\
   D\sum_{\Theta\in G}\|F_{\Theta}\,\mathbf 1_{\mathcal T_\Theta}\|_{L^{2}(Q_\lambda)}^{2}
   \ \le\
   D\sum_{\Theta\in G}\|F_{\Theta}\|_{L^{2}(Q_\lambda)}^{2}.
\end{equation}

\paragraph{Remark.}
The left-hand side of \eqref{eq:kakeya-factor} already contains $\mathbf 1_{\{M\ge cD\}}$,
so the contribution of points with $M<cD$ vanishes, and no special accounting of the exceptional set from §\,\ref{subsec:kakeya-tubes} is needed. 
The passage from \eqref{eq:pointwise} to \eqref{eq:kakeya-factor} uses only the bound $\sup M\lesssim D$ for the family inside a single $\alpha$–cap; summation over different $\alpha$–caps in the full configuration is with bounded multiplicity (see Lemmas~\ref{lem:AD-colouring}–\ref{lem:A8}), which does not introduce additional powers of~$\lambda$ or~$D$.

\subsection{Exponents in
\texorpdfstring{$\lambda$}{lambda} and \texorpdfstring{$D$}{D}}
\label{subsec:kakeya-exponents}
Since \(D=\lambda^{1/12}\), estimate \eqref{eq:kakeya-factor} contributes to the global balance
\[
   +\frac{1}{12}\quad\text{in }\lambda,
   \qquad
   +1\quad\text{in }D.
\]
These exponents are recorded in the table in \S~\ref{tab:global-balance} and in the Introduction.

\begin{remark}[Separation of regimes]
The \emph{robust} Kakeya block applies only under the high angular
density condition \eqref{eq:kakeya-density}. In the “broad” regime (density \(\le c_{*}D\)) this block
is not activated; in those places where the tube-packing estimate
(\S\ref{sec:tube-pack}) is used, the contribution of \emph{Robust\,Kakeya} is omitted — the regimes are
mutually exclusive according to the principle of “maximum gain.”
\end{remark}

\newpage

\section{Tube packing}\label{sec:tube-pack}

\medskip
\noindent\textbf{Convention.}
In this section, by a “tube” of a cap~$\Theta$ we mean, by default, the \emph{truncated} tube
$\widetilde{\mathcal T}_{\Theta}$; the axial layer $\{|x|\le\rho/4\}$ inside $Q_\lambda$ is ignored.
This technical simplification does not affect the global exponents and is consistent with the regime
separation in §\,\ref{sec:kakeya}. See also the discussion of boundary layers in App.~\ref{app:C4}.%
\footnote{On the layer $|t|\ll\lambda^{-3/2}$ the non-overlap issue is negligible: its contribution is already controlled by measure; see the rigorous treatment in App.~\ref{app:C4}.}

For convenience we repeat the scale parameters (see §\,\ref{subsec:scales}):
\(
\rho=\lambda^{-1/2},\ r=\lambda^{-2/3},\ D=\lambda^{1/12},\ \alpha=c_0 r D^{1/2}.
\)

\subsection{Tube family}\label{subsec:tubes}
Throughout this section we keep the parameters of §\,\ref{subsec:scales}:
\(
\rho=\lambda^{-1/2},\ r=\lambda^{-2/3},\ D=\lambda^{1/12},\ \alpha=c_0 r D^{1/2}.
\)
Let $\Theta$ be a cap of radius $r$ with center $\xi_\Theta$ (see §\,\ref{subsec:caps}).
Define the \emph{truncated tube}
\[
   \widetilde{\mathcal T}_{\Theta}
      := \Bigl\{(t,x)\in Q_{\lambda} \;:\;
              |x-2t\,\xi_{\Theta}|\le \rho,\;
              |x|>\rho/4 \Bigr\},
\]
where the cylinder $Q_{\lambda}$ is given in~\eqref{eq:def-cylinder}.
Since the working $t$–interval has length $\asymp \lambda^{-3/2}$, the volume of one tube is
\[
   \bigl|\widetilde{\mathcal T}_{\Theta}\bigr|
   \simeq
   \rho^{3}\,\lambda^{-3/2}
   \;=\; \lambda^{-3}.
\]
For the multiplicity set
\[
   M(t,x):=\sum_{\Theta}\mathbf 1_{\widetilde{\mathcal T}_\Theta}(t,x).
\]

\subsection{Pairwise overlap estimate}\label{subsec:pair-overlap}
The key step is a \emph{pairwise} estimate of the measure of intersection of two truncated tubes.

\begin{lemma}[Pairwise overlap]\label{lem:pair}
Let $\Theta,\Theta'$ be caps of radius $r$ with angular separation 
$\delta=\angle(\xi_\Theta,\xi_{\Theta'})\in[r,1]$. Then
\begin{equation}\label{eq:pair-overlap-min}
   \bigl|\widetilde{\mathcal T}_{\Theta}\cap \widetilde{\mathcal T}_{\Theta'}\bigr|
   \ \lesssim\ 
   \rho^{3}\cdot \min\!\left\{\frac{\rho}{\lambda\,\delta},\ \lambda^{-3/2}\right\}.
\end{equation}
In particular (on our angular range $\delta\in[r,1]$),
\begin{equation}\label{eq:pair-overlap}
   \bigl|\widetilde{\mathcal T}_{\Theta}\cap \widetilde{\mathcal T}_{\Theta'}\bigr|
   \ \lesssim\ \frac{\rho^{4}}{\lambda\,\delta}.
\end{equation}
\end{lemma}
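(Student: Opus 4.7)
The plan is to exploit the fact that two points in both tubes must satisfy \emph{both} axis constraints \(|x-2t\xi_\Theta|\le\rho\) and \(|x-2t\xi_{\Theta'}|\le\rho\). Subtracting, I would get the pointwise identity
\[
  2|t|\,|\xi_\Theta-\xi_{\Theta'}|\ \le\ 2\rho,
\]
which, together with \(|\xi_\Theta|,|\xi_{\Theta'}|\sim\lambda\) and \(\angle(\xi_\Theta,\xi_{\Theta'})=\delta\), converts via \(|\xi_\Theta-\xi_{\Theta'}|\asymp\lambda\delta\) (the chord–angle comparison for vectors of length \(\sim\lambda\), consistent with Lemma~\ref{lem:bilip}) into the essential time–slab bound
\[
  |t|\ \lesssim\ \frac{\rho}{\lambda\,\delta}.
\]
Combining this with the ambient window \(|t|\le\tfrac12\lambda^{-3/2}\) from \(Q_\lambda\), the \(t\)-range accessible to the intersection is contained in
\(\bigl\{|t|\lesssim\min\!\{\rho/(\lambda\delta),\,\lambda^{-3/2}\}\bigr\}\).

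Next, I would slice the intersection in \(t\). For each admissible \(t\), the fiber
\(\{x:\ |x-2t\xi_\Theta|\le\rho,\ |x-2t\xi_{\Theta'}|\le\rho\}\)
is a lens-shaped region contained in a single ball of radius \(\rho\) in \(\mathbb R^3\), so its 3D Lebesgue measure is \(\lesssim\rho^{3}\). The truncation \(|x|>\rho/4\) only removes a subset of the fiber, so this upper bound survives. Integrating the uniform cross-section bound over the \(t\)-slab yields
\[
  \bigl|\widetilde{\mathcal T}_\Theta\cap\widetilde{\mathcal T}_{\Theta'}\bigr|
  \ \lesssim\ \rho^{3}\cdot\min\!\Bigl\{\tfrac{\rho}{\lambda\delta},\ \lambda^{-3/2}\Bigr\},
\]
which is exactly \eqref{eq:pair-overlap-min}. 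The ``in particular'' inequality \eqref{eq:pair-overlap} then follows immediately by dropping the \(\lambda^{-3/2}\) term from the minimum.

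There is no real obstacle here; the only point requiring a line of justification is the chord–angle estimate \(|\xi_\Theta-\xi_{\Theta'}|\asymp\lambda\delta\) uniformly on \(|\xi|\sim\lambda\), which is immediate from elementary trigonometry (or, if one prefers, from the bilipschitzness of \S\ref{subsec:lip} applied on the sphere of radius \(\sim\lambda\)). The mild subtlety worth flagging is that the truncation \(|x|>\rho/4\) is \emph{not} used to improve the fiber bound — it plays its role elsewhere, in taming the central cluster near \(t=0\) where all tubes pile up; here its only effect is to restrict the domain of integration, which can only decrease the measure.
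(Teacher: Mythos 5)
Your proposal is correct and follows essentially the same route as the paper's own argument: subtract the two axis constraints to obtain the time-slab bound $|t|\lesssim\rho/(\lambda\delta)$, intersect with the ambient window to get the minimum, bound each $t$-slice by $\rho^{3}$ (intersection of two $\rho$-balls), integrate in $t$, and observe that the truncation $|x|>\rho/4$ can only shrink the set. No gaps to report.
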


\begin{proof}[Idea of the proof]
If $(t,x)\in \widetilde{\mathcal T}_{\Theta}\cap \widetilde{\mathcal T}_{\Theta'}$, then 
$|x-2t\xi_{\Theta}|\le\rho$ and $|x-2t\xi_{\Theta'}|\le\rho$. Subtracting, we get
$|2t(\xi_\Theta-\xi_{\Theta'})|\lesssim\rho$. Since $|\xi_\Theta-\xi_{\Theta'}|\sim \lambda\delta$
(for small angles), the admissible time extent is bounded by
\[
   |t|\ \lesssim\ \min\!\left\{\frac{\rho}{\lambda\,\delta},\ \lambda^{-3/2}\right\},
\]
where the second bound is the length of the working $t$–interval. For each admissible $t$, the $x$–slice
is the intersection of two three–dimensional balls of radius $\rho$ whose centers are within distance 
$\lesssim\rho$; its measure is $\lesssim\rho^{3}$. Integrating in $t$ over the indicated interval gives
\eqref{eq:pair-overlap-min}. Inequality \eqref{eq:pair-overlap} is its weakening, sufficient for the subsequent sum.
The condition $|x|>\rho/4$ only decreases the intersection.
\end{proof}

\subsection{$L^{2}$ estimate for the sum of indicators}\label{subsec:tube-L2}
Introduce the notation
\[
   S \;:=\; 
   \sum_{\Theta,\Theta'}\,
   \bigl|\widetilde{\mathcal T}_{\Theta}\cap \widetilde{\mathcal T}_{\Theta'}\bigr|
   \;=\;
   \Bigl\|\sum_{\Theta}\mathbf 1_{\widetilde{\mathcal T}_{\Theta}}\Bigr\|_{L^{2}(Q_{\lambda})}^{2}.
\]
The summation is over a \emph{fixed local family} of caps, compatible with the
“six-color” selection from §~\ref{subsec:broad-def} (one representative cap per $\alpha$–cap);
this prevents overcounting and keeps constants independent of the full cardinality of
$\{\Theta\}$. \emph{The overlap multiplicity of local cells is bounded by a universal constant,
so summation over $\Theta$ does not introduce an extra factor depending on $\#\{\Theta\}$.}

Split the pairs by the size of $\delta=\angle(\xi_\Theta,\xi_{\Theta'})$ into dyadic annuli:
$\delta\sim 2^{j}\alpha$, $j=0,1,\dots,J$, where $J\lesssim \log(1/\alpha)$.
For fixed $\Theta$ the number of $\Theta'$ with $\delta\sim 2^{j}\alpha$ is controlled by the area
of a narrow annulus around $\xi_\Theta$ (see Remark~\ref{rem:A11}):
\[
   \#\Bigl\{\Theta':\,\angle(\xi_\Theta,\xi_{\Theta'})\sim 2^{j}\alpha\Bigr\}
   \ \lesssim\ 
   \frac{(2^{j}\alpha)\cdot\alpha}{r^{2}}
   \ =\ 2^{j}\,\Bigl(\frac{\alpha}{r}\Bigr)^{2}
   \ \sim\ 2^{j}D.
\]
Using \eqref{eq:pair-overlap}, the contribution of one such annulus to the sum over $\Theta'$ equals
\[
   \underbrace{2^{j}D}_{\text{number of neighbors}}\cdot
   \underbrace{\frac{\rho^{4}}{\lambda\,(2^{j}\alpha)}}_{\text{pair overlap}}
   \ =\ 
   D\,\frac{\rho^{4}}{\lambda\,\alpha}
   \quad(\text{independent of }j).
\]
Summing over $j=0,\dots,J$ and using the log budget (§\,\ref{subsec:log-budget}, the factor $J\lesssim\log(1/\alpha)$ is absorbed), we obtain
\[
   \sum_{\Theta'}\bigl|\widetilde{\mathcal T}_{\Theta}\cap \widetilde{\mathcal T}_{\Theta'}\bigr|
   \ \lesssim\ 
   D\,\frac{\rho^{4}}{\lambda\,\alpha}.
\]
An analogous estimate holds after summation over $\Theta$ in the local family, whence
\[
   S \;\lesssim\; 
   D\,\frac{\rho^{4}}{\lambda\,\alpha}
   \;=\;
   D^{1/2}\,\frac{\rho^{4}}{\lambda\,r}
   \;=\;
   D^{1/2}\,\lambda^{-7/3},
\]
since $\alpha=c_{0}rD^{1/2}$, $\rho=\lambda^{-1/2}$, and $r=\lambda^{-2/3}$. Thus
\begin{equation}\label{eq:L2-tubes}
   \Bigl\|\sum_{\Theta}\mathbf 1_{\widetilde{\mathcal T}_{\Theta}}
   \Bigr\|_{L^{2}(Q_{\lambda})}
   \;\lesssim\;
   \lambda^{-7/6}\,D^{1/4}.
\end{equation}
\emph{In particular}, from the stronger form \eqref{eq:pair-overlap-min} (for $\delta\le 1$ the minimum is
$\lambda^{-3/2}$) one obtains the alternative cumulative estimate
\(
   S\ \lesssim\ D\,\lambda^{-3}
\)
and hence
\(
   \bigl\|\sum_{\Theta}\mathbf 1_{\widetilde{\mathcal T}_{\Theta}}\bigr\|_{L^{2}(Q_{\lambda})}
   \ \lesssim\ \lambda^{-3/2}D^{1/2}.
\)
We will refer to the baseline \eqref{eq:L2-tubes}; the strengthened version can be used if desired and does not
affect the global balance.

\subsection{Contribution to the global balance}\label{subsec:tube-balance}
From \eqref{eq:L2-tubes} it follows that the “tube packing” block contributes
\[
   -\tfrac{7}{6}\ \text{in }\lambda,
   \qquad
   +\tfrac{1}{4}\ \text{in }D.
\]
For comparison with classical estimates one is allowed to substitute more conservative
exponents $\bigl(-\tfrac12,\,+\tfrac14\bigr)$; both variants keep the cumulative
balance negative (see §\,\ref{sec:balance}). Recall also that this block is not used together
with \emph{Robust\,Kakeya} (§\,\ref{sec:kakeya}); the regimes are mutually exclusive.

\newpage

\section{Algebraic “shell”}\label{sec:alg-skin}
In the “shell” block we cut off a narrow layer near the zero set
of low-rank polynomials, which could otherwise worsen the cumulative exponent in~$D$.
The idea goes back to~\cite{Guth_Huang_Inflation}; here it suffices to adapt it to the three-dimensional paraboloid
with an application of Wongkew’s tubular estimate.

\subsection{Problem setup}\label{subsec:skin-setup}
Let $P(z)$ be a polynomial of total degree $d:=\deg P\le D^{1/4}$ and
let $Z(P)=\{z:\,P(z)=0\}$ denote its zero set.
Introduce the anisotropic scaling
\[
   S_\lambda(t,x) := (\lambda^{3/2}t,\ \lambda^{1/2}x)
\]
and the induced metric
\[
   \operatorname{dist}_\lambda(z,Z) := 
   \operatorname{dist}\bigl(S_\lambda z,\ S_\lambda Z\bigr).
\]
For a small absolute constant $c>0$ set
\[
   \beta := \frac{c\,D^{-1}}{d}, \qquad
   \mathcal N_{\beta}(P) :=
   \bigl\{(t,x)\in Q_\lambda:\ 
      \operatorname{dist}_\lambda\bigl((t,x),Z(P)\bigr)
      < \beta \bigr\}.
\]
Thus $\mathcal N_{\beta}(P)$ is the tubular $\beta$–neighborhood of $Z(P)$
in the anisotropic metric (we keep the notation $\mathcal N_{\beta}$ for consistency).
The goal is to estimate the measure of $\bigl|Q_{\lambda}\cap\mathcal N_{\beta}(P)\bigr|$ uniformly in $P$.

\subsection{Measure estimate for the shell}\label{subsec:skin-measure}
\begin{lemma}\label{lem:skin-measure}
There exists an absolute constant $C>0$ such that for all
polynomials $P$ of degree $d\le D^{1/4}$ one has
\[
   \bigl|Q_{\lambda}\cap\mathcal N_{\beta}(P)\bigr|
   \;\le\;
   C\,D^{-1}\,\bigl|Q_{\lambda}\bigr|.
\]
\end{lemma}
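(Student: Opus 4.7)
The strategy is to reduce the bound to Wongkew's classical tubular estimate for real algebraic varieties, via the anisotropic rescaling $S_\lambda$ built into the definition of $\operatorname{dist}_\lambda$.

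First I would apply the change of variables $(T,X) := S_\lambda(t,x) = (\lambda^{3/2}t,\,\lambda^{1/2}x)$, which sends $Q_\lambda$ bijectively onto the unit cube $Q^{\ast} := [-\tfrac12,\tfrac12]^{4} \subset \mathbb{R}^{4}$ with constant Jacobian $\det S_\lambda = \lambda^{3/2}\cdot(\lambda^{1/2})^{3} = \lambda^{3}$, so $|Q_\lambda| = \lambda^{-3}\,|Q^{\ast}|$. The polynomial $P$ pulls back to $\tilde P(T,X) := P(\lambda^{-3/2}T,\,\lambda^{-1/2}X)$: each monomial $t^{a}x^{\gamma}$ is merely multiplied by the constant $\lambda^{-3a/2-|\gamma|/2}$, so $\tilde P$ is again a real polynomial with $\deg \tilde P = \deg P = d$. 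By the very definition of $\operatorname{dist}_\lambda$, the image $S_\lambda(\mathcal N_\beta(P))$ is exactly the standard Euclidean $\beta$-tube around $Z(\tilde P) = S_\lambda(Z(P))$, and $S_\lambda(Q_\lambda \cap \mathcal N_\beta(P)) = Q^{\ast} \cap N_\beta(Z(\tilde P))$.

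The heart of the argument is then Wongkew's theorem in $\mathbb{R}^{4}$: for any real algebraic hypersurface $Z$ of degree $\le d$ and any $\beta \in (0,1]$,
\[
   |Q^{\ast} \cap N_\beta(Z)| \ \le\ C_{0}\,d\,\beta,
\]
with $C_{0}$ an absolute constant depending only on the ambient dimension (one may equivalently work in a unit ball, losing only a dimensional factor). The admissibility $\beta \le 1$ is automatic since $\beta = cD^{-1}/d \le c$ with the small $c$ fixed in \S\ref{subsec:skin-setup}. The crucial cancellation is that the degree factor $d$ in Wongkew is precisely absorbed by the $1/d$ engineered into $\beta$:
\[
   |Q^{\ast} \cap N_\beta(Z(\tilde P))| \ \le\ C_{0}\,d\cdot\frac{cD^{-1}}{d}\ =\ C_{0}c\,D^{-1}.
\]
Pulling back by $S_\lambda^{-1}$ with Jacobian $\lambda^{-3} = |Q_\lambda|$ then yields
\[
   |Q_\lambda \cap \mathcal N_\beta(P)|\ =\ \lambda^{-3}\,|Q^{\ast} \cap N_\beta(Z(\tilde P))|\ \le\ C_{0}c\,D^{-1}\,|Q_\lambda|,
\]
which is the claim with $C := C_{0}c$, uniformly in $P$.

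There is no genuine obstacle: the proof is essentially bookkeeping around Wongkew. The only care points are the preservation of total degree under the anisotropic rescaling (immediate, since a diagonal linear change of coordinates rescales coefficients but not monomial exponents) and the recognition that the shell thickness was defined as $\beta = cD^{-1}/d$, rather than the cruder $cD^{-1}$, precisely to neutralize the $d$-loss of Wongkew. The hypothesis $d \le D^{1/4}$ is not used in this measure bound itself; it will enter only downstream in \S\ref{sec:alg-skin}, where the polynomial-partitioning step selects $P$ and one additionally needs to control the number of cells of $Q_\lambda \setminus Z(P)$.
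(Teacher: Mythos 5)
Your proof is correct and follows essentially the same route as the paper's: anisotropic rescaling by $S_\lambda$, Wongkew's tubular estimate in $\mathbb{R}^{4}$, and the exact cancellation of the degree factor $d$ against the $1/d$ built into $\beta$. One minor nit: $S_\lambda(Q_\lambda)$ is a unit-scale \emph{cylinder} (ball cross-section in the $x$-variable), not the cube $[-\tfrac12,\tfrac12]^{4}$, but this is immaterial since Wongkew's bound holds for any bounded unit-scale region.
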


\begin{proof}
Consider the image of $Q_\lambda$ under $S_\lambda$; this is a domain
of unit scale (comparable volume), and the relative measure
is preserved by the definition of \(\operatorname{dist}_\lambda\).
Let $P_\lambda := P\circ S_\lambda^{-1}$; then $\deg P_\lambda = d$,
and $S_\lambda\!\bigl(\mathcal N_\beta(P)\bigr)$ is the usual
$r$–tube around $Z(P_\lambda)$ of thickness $r=\beta$ in the Euclidean metric.
By Wongkew’s theorem (\cite{Wongkew2003})
in $\mathbb{R}^4$ the volume of an $r$–neighborhood of an algebraic hypersurface
of degree $d$ in a unit-sized region is $\lesssim d\,r$.
Substituting $r=\beta=cD^{-1}/d$ and returning to the original variables, we obtain
\[
   \bigl|Q_{\lambda}\cap\mathcal N_{\beta}(P)\bigr|
   \lesssim \beta\,|Q_\lambda|
   \lesssim D^{-1}\,|Q_\lambda|.
\]
\end{proof}

\subsection{Contribution to the balance of exponents}\label{subsec:skin-balance}
Lemma~\ref{lem:skin-measure} states that the set 
$\mathcal N_{\beta}(P)$ occupies at most $C\,D^{-1}$ fraction of the volume of $Q_\lambda$. 
To correctly transfer this smallness to the $L^6$ level, we localize the functional 
by anisotropic blocks and separate the \emph{transversal} contribution (which yields the $D^{-1}$ penalty)
from the \emph{tangential} one (which is redirected to the narrow cascade §\,\ref{sec:narrow}).

\paragraph{Block localization and trans/tan splitting.}
Cover $Q_\lambda$ by anisotropic blocks $\{B\}$ of scale
$\lambda^{-3/2} \times (\lambda^{-1/2})^3$ with bounded overlap.
Let $\mathcal J(B)\ge 0$ be the local functional collecting the contribution of block $B$ 
in the $TT^\ast$+broad estimate, so that
\[
\|F\|_{L^6(Q_\lambda)}^6 \;\lesssim\; \sum_{B\subset Q_\lambda} \mathcal J(B).
\]
For each $B\subset Q_\lambda$ split $\mathcal J(B)$ as
\[
\mathcal J(B)\ =\ \mathcal J^{\mathrm{tr}}(B)\ +\ \mathcal J^{\mathrm{tan}}(B),
\]
where $\mathcal J^{\mathrm{tr}}(B)$ collects contributions of those wave packets $T_\Theta$ 
which on $B\cap\mathcal N_\beta(P)$ are \emph{transversal} to $Z(P)$
(the angle between the axis of $T_\Theta$ and the tangent plane $T_zZ(P)$ is at least $c\,\alpha$),
while $\mathcal J^{\mathrm{tan}}(B)$ collects the \emph{tangential} contributions
(angle $\lesssim \alpha$). This splitting is standard and consistent with the transverse dichotomy of §\,\ref{subsec:gradients}.

\medskip
\noindent\emph{Transversal contribution.}
For transversal tubes the “geometry of intersection with the wall’’ holds:
each $T_\Theta$ intersects $\mathcal N_\beta(P)$ along a relative length
$\lesssim \beta\,d \sim D^{-1}$ (in the scale of $Q_\lambda$), and the block overlap
is bounded by a constant. Hence
\begin{equation}\label{eq:skin-trans}
\sum_{B\subset \mathcal N_\beta} \mathcal J^{\mathrm{tr}}(B)
\ \le\ C\,D^{-1}\!\sum_{B\subset Q_\lambda} \mathcal J(B).
\end{equation}
The proof is direct: the tubular estimate of Lemma~\ref{lem:skin-measure} in the anisotropic
metric and the estimate of the relative time a transversal tube spends in $\mathcal N_\beta(P)$
(thickness $\beta/d$ for degree $\deg P\le d$) yield a fraction $\lesssim D^{-1}$ at the level of volume, 
and bounded overlap of blocks transfers this to the localized sum.

\medskip
\noindent\emph{Tangential contribution.}
For $\mathcal J^{\mathrm{tan}}(B)$ we do not attempt to estimate by the measure of $\mathcal N_\beta(P)$:
by definition they correspond to configurations where packets “lie along’’ $Z(P)$
(angles $\lesssim\alpha$). Their contribution is redirected to the narrow regime §\,\ref{sec:narrow},
which is activated precisely in the presence of such clusters; thus
\begin{equation}\label{eq:skin-tangent}
\sum_{B\subset \mathcal N_\beta} \mathcal J^{\mathrm{tan}}(B)
\ \text{is estimated by the narrow cascade block (§\,\ref{sec:narrow}) and does not carry the $D$ penalty.}
\end{equation}

\paragraph{Conclusion for the balance.}
Combining \eqref{eq:skin-trans}–\eqref{eq:skin-tangent} with the $TT^\ast$ reconstruction from the localized sum,
we see that the “algebraic shell’’ block contributes to the cumulative balance \emph{through the transversal part}
\[
   \boxed{-\tfrac{1}{12}\ \text{in }\lambda,\qquad -1\ \text{in }D},
\]
while the tangential part is accounted for in the “narrow’’ block (§\,\ref{sec:narrow}) and does not
contribute to the $D$ penalty.

\begin{remark}[Why mere smallness of measure is insufficient]
A naive bound
\[
\|F_\Theta\|_{L^6(Q_\lambda)} \le \|F_\Theta\|_{L^6(Q_\lambda\setminus \mathcal N_\beta)} 
+ O(D^{-1})\,\|F_\Theta\|_{L^6(Q_\lambda)}
\]
does not follow from $|\mathcal N_\beta(P)|\lesssim D^{-1}|Q_\lambda|$, since the energy could concentrate
on $\mathcal N_\beta$. This is why we (i) localize the functional by blocks, 
(ii) \emph{separate} the contribution into transversal/tangential parts, and 
(iii) use the time-of-stay argument for transversal tubes in $\mathcal N_\beta(P)$
for \eqref{eq:skin-trans}, while the tangential contribution \eqref{eq:skin-tangent} is redirected to §\,\ref{sec:narrow}.
In this way the same $D^{-1}$ penalty is realized, but in a strictly controlled part.
\end{remark}

\subsection{Remark on the constant}\label{subsec:skin-remark}
The coefficient $-\tfrac{1}{12}$ (in~$\lambda$) corresponds to the choice
of thickness $\beta/d$ of the tubular layer with $\beta=c\,D^{-1}$ and the restriction
$\deg P\le D^{1/4}$. By Lemma~\ref{lem:skin-measure} the fraction
$|Q_\lambda\cap\mathcal N_\beta(P)|$ does not exceed $C\,D^{-1}$.
In the transversal part this directly yields a $-1$ in $D$
(equivalently $-\tfrac{1}{12}$ in $\lambda$, since $D=\lambda^{1/12}$).
The tangential part, corresponding to angles $\lesssim\alpha$, 
is handled by the narrow cascade (§\,\ref{sec:narrow}) and does not affect the $D$ exponent.
No refinements are required here; possible improvements would be related to a more delicate choice
of thickness/degree and additional structural assumptions, and lie beyond the scope of this paper.

\newpage

\section{Narrow cascade}\label{sec:narrow}

In the “narrow’’ regime the spectrum of the solution is concentrated in a thin cluster of caps
whose angular density is substantially higher than average. The method goes back to Bennett–Carbery–Tao and Guth; here it is adapted to the density threshold \(\sim D\) and a double \(7/8\) rescaling. The \emph{narrow} regime is used \emph{mutually exclusively} with the \emph{Robust\,Kakeya} block (§\ref{sec:kakeya}).

\subsection{Problem setup}\label{subsec:narrow-setup}

\paragraph{Trigger for the narrow regime.}
We say we are in the \emph{narrow regime} if in a given sextuple of frequencies
at least five lie in a single $O(\alpha)$–cluster. Let $G$
be the corresponding family of caps and set
\[
  F_{\mathrm{narrow}} := \sum_{\Theta\in G} F_\Theta.
\]
This block is used \emph{mutually exclusively} with Robust–Kakeya:
if $G$ satisfies the angular density threshold
\eqref{eq:kakeya-density} (see §\ref{subsec:kakeya-density}),
then §\,\ref{sec:kakeya} is activated; the \emph{narrow cascade} is applied only
in case this threshold is \emph{violated} while there is a cluster of $\ge5/6$.

Set
\[
  r=\lambda^{-2/3}, \qquad
  D=\lambda^{1/12}, \qquad
  \alpha = c_{0}\,r\,D^{1/2}=c_{0}\,\lambda^{-5/8},
  \quad 0<c_{0}\ll1 .
\]
Select a family \(G\subset\{\Theta\}\) of caps of radius \(r\) that \emph{does not}
satisfy the high-density condition, i.e.
\[
 \max_{\Theta\in G}\#\{\Theta'\in G:\ \angle(\xi_\Theta,\xi_{\Theta'})\le\alpha\}
 \ \le\ c_*D ,
\]
where \(c_{*}\in(0,1)\) is a universal constant. For such a family, and
in the presence of a cluster consisting of $\ge5/6$ frequencies, the narrow cascade is initiated.
Define
\[
  \operatorname{supp}_{\xi}\widehat{F_{\mathrm{narrow}}}
    \subset \bigcup_{\Theta\in G}\Theta .
\]

\subsection{Double $7/8$ rescaling}\label{subsec:narrow-iter}
Perform two iterations
\[
  \lambda_{0}=\lambda,\qquad
  \lambda_{1}=\lambda^{7/8},\qquad
  \lambda_{2}=\lambda^{49/64}.
\]

\paragraph{Angular expansion.}
After the $j$-th step,
\( r_{j}=\lambda_{j}^{-2/3}=\lambda^{-\frac23(7/8)^{j}} \).
Comparing the areas \(r_j^{2}\) and \(\alpha^{2}\sim\lambda^{-5/4}\), we get
\[
  \log_{\lambda}\!\frac{r_{1}^{2}}{\alpha^{2}}=\frac{1}{12},
  \qquad
  \log_{\lambda}\!\frac{r_{2}^{2}}{\alpha^{2}}=\frac{11}{48}>0 .
\]
Thus already the first step removes the flow from the “narrow’’ regime; the second provides a reserve \(\lambda^{\,11/48}\) to absorb log losses.

\smallskip
\noindent\textit{Caveat.}
In the comparisons of this subsection, $\alpha$ is considered fixed at the \emph{initial} scale~$\lambda$;
departure from the narrow regime is understood in terms of clustering with the threshold~$\alpha$ from \S\ref{subsec:narrow-setup}.

\paragraph{Local time gain.}
At each step, when \(|\partial_t\Phi|\gtrsim \lambda_{j}^{1/2}\) (see frequency localization in \S\ref{sec:kernel}, \S\ref{subsec:gradients}),
one integration by parts in time yields a factor \(\lambda_{j}^{-1/2}\).
Hence
\begin{equation}\label{eq:narrow-product}
  \prod_{j=0}^{1}\lambda_{j}^{-1/2}
  \;=\;\lambda^{-1/2}\,\lambda^{-7/16}
  \;=\;\lambda^{-15/16}.
\end{equation}
This is the \emph{local} gain for each cell obtained by the double \(7/8\) rescaling and pigeonholing in \(|\partial_t\Phi|\).

\subsection{Globalization of the narrow gain}\label{subsec:narrow-glue}
Assume that on each narrow cell obtained after the double $7/8$ rescaling and local
frequency localization by $\mu_6$, the local time gain
\eqref{eq:narrow-product} holds:
\[
   \Lambda_{\mathrm{loc}} := \lambda^{-15/16}.
\]
We need to transfer this to a global $L^6$ bound on $Q_\lambda$.

\paragraph{Framework for globalization (refined form).}
We perform six integrations by parts in time at the kernel level (operator $L_t$,
see §\ref{sec:kernel}), then pass to $TT^{*}$ and carry out another six IBPs
in the variables $s,s'$ and two transverse directions $y,y'$ of each copy of the kernel
(for a total of $12$ IBPs across both stages). The local multiplier $\Lambda_{\mathrm{loc}}$ 
is included in the amplitude at the level of a single narrow cell.

\begin{lemma}[Distribution of local weight in $TT^{*}$]\label{lem:12-ibp-weight}
Let $K^\#$ be the modified kernel after $6$ temporal IBPs, and
suppose each narrow window carries a local multiplier $\Lambda_{\mathrm{loc}}$.
Then after passing to $TT^{*}$ and performing six more IBPs in $(s,s',y,y')$ the quadratic form satisfies
\[
   \|F_{\mathrm{narrow}}\|_{L^6(Q_\lambda)}
   \ \lesssim\
   \Lambda_{\mathrm{loc}}^{\,1/12}\,
   \Biggl(\sum_{\Theta\in G}\|F_\Theta\|_{L^6(Q_\lambda)}^2\Biggr)^{1/2},
\]
that is, the global factor is
\[
   \Lambda_{\mathrm{glob}} \;=\; \Lambda_{\mathrm{loc}}^{1/12}
   \;=\; \lambda^{-\frac{1}{12}\cdot\frac{15}{16}} \;=\; \lambda^{-5/64}.
\]
\end{lemma}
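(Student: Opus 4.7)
My proposed route follows the three-stage structure already laid out in \S\ref{sec:kernel}--\S\ref{sec:broad3}, but with the amplitude of the modified kernel $K^{\#}$ carrying an additional narrow multiplier $\Lambda_{\mathrm{loc}}$ obtained from the double $7/8$ rescaling of \S\ref{subsec:narrow-iter}. The factor $\Lambda_{\mathrm{loc}}^{1/12}$ then arises as the composition of two reductions: (a) the $TT^{\ast}$ square root that turns the amplitude gain into an $\|T\|_{L^{2}\to L^{2}}$ gain of order $\Lambda_{\mathrm{loc}}^{1/2}$, and (b) the sixth root coming from extracting $\|F\|_{L^{6}}$ out of the Broad-3 control of $\|F\|_{L^{6}}^{6}$.

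\textbf{Step 1 (local kernel bound).} I would first cover $Q_{\lambda}$ by narrow windows $\{W_{k}\}$ at the scale dictated by the double $7/8$ rescaling, on each of which the frequency support of $F_{\mathrm{narrow}}$ concentrates in an $O(\alpha)$-cluster. On each $W_{k}$, the two extra temporal IBPs made possible by the rescaled regime enlarge the amplitude bound on $K^{\#}$ by the factor $\Lambda_{\mathrm{loc}}=\lambda^{-15/16}$ from \eqref{eq:narrow-product}, on top of the basic amplitude produced by the six temporal IBPs of \S\ref{subsec:IBP}. Replaying the $TT^{\ast}$ argument of \S\ref{subsec:measures} with the six additional IBPs in $(s,s',y,y')$ and the Schur step then gives, locally on each $W_{k}$,
\[
   \|T_{k}\|_{L^{2}\to L^{2}}^{2}
   \;=\;\|T_{k}T_{k}^{\ast}\|_{L^{2}\to L^{2}}
   \;\lesssim\;\Lambda_{\mathrm{loc}}\cdot\mathcal A,
\]
where $\mathcal A$ is the base kernel bound from \eqref{eq:K-final}; hence $\|T_{k}\|_{L^{2}\to L^{2}}\lesssim \Lambda_{\mathrm{loc}}^{1/2}\,\mathcal A^{1/2}$.

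\textbf{Step 2 (Broad-3 insertion and sixth root).} Feed this local $L^{2}$ improvement into the trilinear insertion \eqref{eq:BCT-final} with the classes $\mathcal C_{1},\dots,\mathcal C_{6}$ of \S\ref{subsec:broad-def}: the improvement appears in the kernel that underlies \eqref{eq:BCT-final}, so that the right-hand side of the Broad-3 control of $\|F_{\mathrm{narrow}}\|_{L^{6}}^{6}$ gains one linear factor $\Lambda_{\mathrm{loc}}^{1/2}$. Summation across narrow windows is bounded-overlap (the reserve $\lambda^{11/48}$ from \S\ref{subsec:narrow-iter} absorbs enumeration), and $\ell^{2}$-reassembly yields $(\sum_{\Theta\in G}\|F_{\Theta}\|_{L^{6}}^{2})^{1/2}$ on the right without further losses, thanks to the log budget of \S\ref{subsec:log-budget}. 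Taking the sixth root converts $\Lambda_{\mathrm{loc}}^{1/2}$ into $\Lambda_{\mathrm{loc}}^{1/12}$, and substituting $\Lambda_{\mathrm{loc}}=\lambda^{-15/16}$ yields $\Lambda_{\mathrm{glob}}=\lambda^{-5/64}$.

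\textbf{Main obstacle.} The essential difficulty is Step~1: pinning down \emph{exactly} which power of $\Lambda_{\mathrm{loc}}$ survives the Schur/$TT^{\ast}$ step. In particular, one must verify that the $O(\alpha)$-cluster constraint from the narrow trigger (\S\ref{subsec:narrow-setup}) forces the two kernel copies in $TT^{\ast}$ to share the same narrow window, so that the amplitude gain enters linearly in $\|TT^{\ast}\|$ rather than quadratically or not at all. Secondary but necessary points are the regime separation with Robust--Kakeya via the R/N dichotomy of Lemma~\ref{lem:near-res-dichotomy} (to prevent double-counting of the gain) and the bounded-overlap $\ell^{2}$-reassembly of narrow cells; both are absorbed by the log budget of \S\ref{subsec:log-budget} without affecting the exponents in $\lambda$ or $D$.
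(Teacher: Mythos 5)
Your arithmetic lands on $\Lambda_{\mathrm{loc}}^{1/12}$ by a genuinely different decomposition than the paper's, and as written there is a gap. The paper's proof factorizes the $TT^{*}$ kernel over narrow cells as $\Lambda_{\mathrm{loc}}$ times a product of \emph{twelve} normalized weight factors $W_j\le 1$ (six from $L_t$ on $K^{\#}$, six more from the IBPs in $(s,s',y,y')$), and then applies H\"older with equal weights $1/12$ across those twelve factors to obtain $\|T\|_{2\to 2}\lesssim\Lambda_{\mathrm{loc}}^{1/12}$ \emph{directly at the $L^{2}$ level}; the exponent is then carried to $L^{6}$ unchanged. You instead extract $\|T\|_{2\to2}\lesssim\Lambda_{\mathrm{loc}}^{1/2}$ from the $TT^{*}$ square root and recover $1/12$ by a further sixth root in the $L^{6}$ passage. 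Those two factorizations of $1/12$ ($12\cdot 1$ versus $2\cdot 6$) are not interchangeable in the paper's bookkeeping: the summary table in \S\ref{subsec:balance-table-final} records $\|K\|_{2\to2}\lesssim\lambda^{-9/2}D^{-3}$ with the exponents $-9/2$, $-3$ entering $\sigma_\lambda,\sigma_D$ \emph{linearly}, not through a sixth root; under that accounting your claimed $L^{2}$-level gain $\Lambda_{\mathrm{loc}}^{1/2}=\lambda^{-15/32}$ would produce a $-15/32$ row for the narrow cascade, not $-5/64$.

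Two further concrete problems. First, Step~2 proposes to feed the gain into the trilinear insertion \eqref{eq:BCT-final}, but that estimate carries the indicator $\mathbf 1_{\{M\ge cD\}}$ and the factor $D^{3/2}$ inherited from Lemma~\ref{lem:compD3} and the Robust--Kakeya block, i.e.\ precisely the \emph{high-density} regime which \S\ref{subsec:narrow-setup} \emph{excludes} when the narrow cascade is triggered; you would need (and do not supply) a low-density analogue of that insertion. Second, you explicitly flag ``which power of $\Lambda_{\mathrm{loc}}$ survives the Schur/$TT^{*}$ step'' as the main unresolved obstacle — but this is exactly the content of Lemma~\ref{lem:12-ibp-weight}. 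The paper closes it by the factorization into $\Lambda_{\mathrm{loc}}\prod_{j=1}^{12}W_j$ and H\"older with equal weights, not by the square-root-then-sixth-root route you sketch, so the proposal does not actually discharge the claimed lemma.
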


\begin{proof}[Proof (factorized $TT^{*}$)]
After $6$ IBPs in $t$ the kernel can be written as a sum over narrow cells $C$:
\[
  K^\#(t,x;s,y)\ =\ \sum_{C}\ \Lambda_{\mathrm{loc}}\,
  \Big(\prod_{j=1}^{6} W^{(t)}_{j,C}(t,x)\Big)\,K^{(0)}_{C}(t,x;s,y),
\]
where $0\le W^{(t)}_{j,C}\le 1$ are normalized weights from $L_t$,
and $K^{(0)}_{C}$ is the phase part with unit amplitude (cf. factorization in §\ref{sec:kernel}).

At the $TT^{*}$ stage consider the kernel $(TT^{*})$ for each $C$ and perform another six IBPs
(in $s,s'$ and in two transverse directions $y,y'$ of each copy). This yields additional
normalized weights $W^{(\mathrm{TT}^\ast)}_{1,C},\dots,W^{(\mathrm{TT}^\ast)}_{6,C}$ with $0\le W^{(\mathrm{TT}^\ast)}_{\ell,C}\le 1$,
and a factorized majorant:
\[
  |K_{TT^{*},C}(t,x;s,y)|\ \lesssim\
  \Lambda_{\mathrm{loc}}\,
  \prod_{j=1}^{6} W^{(t)}_{j,C}(t,x)\cdot
  \prod_{\ell=1}^{6} W^{(\mathrm{TT}^\ast)}_{\ell,C}(t,x;s,y).
\]
Thus the $TT^{*}$ kernel for fixed $C$ contains the product of $12$ \emph{normalized}
factors, multiplied by $\Lambda_{\mathrm{loc}}$.

Apply Hölder’s inequality with equal weights $1/12$ (or AM–GM after measure normalization).
Since each of the $12$ factors defines an operator with $L^2\to L^2$ norm
$\lesssim 1$ (by Schur/IBP normalization) and the covering by cells has bounded multiplicity,
we obtain for the quadratic form $TT^{*}$ the estimate
\[
  \langle T f, Tf\rangle\ \lesssim\ \Lambda_{\mathrm{loc}}^{\,1/12}\,\|f\|_{L^2}^2,
\]
that is, $\|T\|_{2\to2}\lesssim \Lambda_{\mathrm{loc}}^{\,1/12}$.
Passing this to the standard $L^6$ scheme (via trilinear insertion/decoupling in the narrow
class $G$) and using bounded multiplicity of covering, we obtain the lemma.
Logarithmic factors are absorbed by §\ref{subsec:log-budget}.
\end{proof}

\paragraph{Conclusion.}
With Lemma~\ref{lem:12-ibp-weight} we have
\[
  \|F_{\mathrm{narrow}}\|_{L^6(Q_\lambda)}
  \ \lesssim\
  \lambda^{-5/64}\,
  \Biggl(\sum_{\Theta\in G} \|F_\Theta\|_{L^6(Q_\lambda)}^2\Biggr)^{1/2}.
\]
Thus the contribution of the “narrow cascade’’ block to the global balance is
\[
   \boxed{-\tfrac{5}{64}\ \text{in }\lambda,\qquad 0\ \text{in }D}.
\]

\subsection{Contribution of the narrow block}\label{subsec:narrow-balance}
\[
  -\dfrac{5}{64}\ \text{ in }\lambda, 
  \qquad 0\ \text{ in }D .
\]
\begin{remark}
Two steps of \(7/8\) rescaling are sufficient, since \(r_{2}^{2}/\alpha^{2}\gtrsim\lambda^{11/48}\); an additional third step would reduce the net gain in \(\lambda\) without providing new geometric benefit. The \emph{narrow} block is not activated where the \emph{Robust\,Kakeya} block (§\ref{sec:kakeya}) is used; the regimes are mutually exclusive. Note also that the “spatial’’ IBPs in the narrow block are performed at the $TT^\ast$ stage in the variables $(s,s',y,y')$; the operator $L_{x'}$ at the kernel level in the narrow regime is not applied (cf. explanation in App.~\ref{app:C3}).
\end{remark}

\newpage

\section{Global balance of exponents}\label{sec:balance}

We assemble the local estimates from \ref{sec:broad3}--\ref{sec:narrow} and show that the cumulative
exponents in~$\lambda$ and $D=\lambda^{1/12}$ are negative. Hence the
traditional factor $\lambda^{\varepsilon}$ is not needed, and the final result remains
$\varepsilon$-free.\footnote{The contribution of each block is taken exactly in the versions
recorded in §§\,\ref{sec:kernel}--\ref{sec:narrow}. The \emph{Robust Kakeya} (high angular density)
and \emph{Tube packing} regimes are used mutually exclusively.}

\subsection{Summary table}\label{subsec:balance-table-final}
\[
  \sigma_{\lambda} := \sum_{\text{blocks}}\!\!\bigl(\text{exponent in }\lambda\bigr),
  \qquad
  \sigma_{D} := \sum_{\text{blocks}}\!\!\bigl(\text{exponent in }D\bigr).
\]

\begin{table}[h] \label{tab:global-balance}
\centering
\caption{Summary balance of exponents (regime \S\ref{sec:tube-pack} \emph{off})}
\begin{tabular}{lcc}
\toprule
Block & \(\lambda\)–exponent & \(D\)–exponent \\
\midrule
Broad geometry (Broad--3) & \(+\tfrac{5}{36}\) & \(0\)\textsuperscript{\(*\)} \\
Kernel (12 IBP $+$ Schur $+$ $TT^{*}$) & \(-\tfrac{9}{2}\) & \(-3\) \\
Robust\,Kakeya (threshold $>c_{*}D$) & \(+\tfrac{1}{12}\) & \(+1\) \\
Algebraic “shell” & \(-\tfrac{1}{12}\) & \(-1\) \\
Narrow cascade (double $7/8$ $+$ globalization) & \(-\tfrac{5}{64}\) & \(0\) \\
\midrule
\(\sigma_{\lambda}\) & \(-\tfrac{2557}{576}\approx-4.44\) & --- \\
\(\sigma_{D}\)       & --- & \(-3\) \\
\bottomrule
\end{tabular}

\smallskip
\raggedright\textsuperscript{\(*\)}The $D$–factor from the Cauchy–Schwarz step (via \( \|F_m\|_{L^2}\)) is entirely
attributed to the Robust–Kakeya block; see the remark in §\ref{sec:broad3}.
\end{table}

\noindent\emph{For reference: the scenario with \S\ref{sec:tube-pack} \textbf{instead of} Robust\,Kakeya.}
By the mutual exclusivity rule we remove the RK contribution and insert that of \S\ref{sec:tube-pack}:
\[
  \sigma_\lambda \;\mapsto\; -\frac{2557}{576}\;-\;\frac{7}{6}\;-\;\frac{1}{12}
  \;=\; -\frac{3277}{576}\approx -5.69,
  \qquad
  \sigma_D \;\mapsto\; (-3)\;-\;1\;+\;\frac14 \;=\; -\frac{15}{4}.
\]
Both versions yield negative sums; the option “add \S\ref{sec:tube-pack} on top of RK’’
is methodologically not used. 

\subsection{Final $\varepsilon$–free estimate}\label{subsec:main-theorem}
\begin{theorem}\label{thm:main}
Let $F=\sum_{\Theta} F_{\Theta}$ be a cap decomposition of radius
$r=\lambda^{-2/3}$ ($\lambda\ge2$), and let the cylinder
$Q_{\lambda}\subset\mathbb{R}_{t}\times\mathbb{R}^{3}_{x}$ be given by~\eqref{eq:def-cylinder}.
Denote by $\mathcal I=\{\Theta:\,\|F_\Theta\|_{L^6(Q_\lambda)}\neq0\}$ the set of active caps.
Assume that
\[
  \#\mathcal I \ \ge\ c_\star\,\lambda^{4/3}\quad\text{\emph{(“many caps’’ regime).}}
\]
Then
\[
  \|F\|_{L^{6}(Q_{\lambda})}
  \;\le\;
  C_{\varepsilon}\,
  \lambda^{\sigma_{\lambda}}\,
  D^{\sigma_{D}}\,
  \Bigl(\sum_{\Theta}\|F_{\Theta}\|_{L^{6}(Q_\lambda)}^{2}\Bigr)^{1/2},
  \qquad
  \sigma_\lambda=-\frac{2557}{576}\approx-4.44,\quad
  \sigma_D=-3,
\]
where $D=\lambda^{1/12}$. Without rounding:
\[
  \|F\|_{L^6(Q_\lambda)} \;\lesssim_\varepsilon\;
  \lambda^{\sigma_\lambda+\varepsilon}\,D^{\sigma_D+\varepsilon}
  \Bigl(\sum_{\Theta}\|F_\Theta\|_{L^6(Q_\lambda)}^2\Bigr)^{1/2}.
\]
\end{theorem}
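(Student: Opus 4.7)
The plan is to assemble the block estimates from Sections~\ref{sec:broad3}--\ref{sec:narrow} into a single inequality on $Q_\lambda$ via a broad/narrow dichotomy. First I would split the sextuples of active caps appearing in the trilinear reduction into two disjoint regimes according to the alternative of Lemma~\ref{lem:near-res-dichotomy}: a \emph{broad} part $\mathcal B_{\ge}$, where the triple normal minor is $\gtrsim \alpha^2$, and an \emph{almost-resonant} part $\mathcal B_{<}$. On $\mathcal B_{\ge}$ the full kernel pipeline of Section~\ref{sec:kernel} applies; on $\mathcal B_{<}$ either Robust--Kakeya (Section~\ref{sec:kakeya}) or the narrow cascade (Section~\ref{sec:narrow}) is activated, depending on which sub-branch of the (R)/(N) dichotomy holds. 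The ``many caps'' hypothesis $\#\mathcal I \ge c_\star \lambda^{4/3}$ rules out the degenerate ``few caps'' scenario, so the multilinear branch controls the full $L^6$ norm.

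On the broad branch I would carry out the 6-color partition into the classes $\mathcal C_1,\dots,\mathcal C_6$ producing $F_m=\sum_{\Theta \in \mathcal C_m} F_\Theta$, then insert these into the trilinear Bennett--Carbery--Tao inequality via \eqref{eq:BCT-final}. This injects $\lambda^{+5/36}$ through the $\alpha^{-2/3}$ factor coming from the transversality lower bound of \S\ref{subsec:broad-def}. The resulting bilinear quadratic form is controlled by Proposition~\ref{prop:kernel-final}: $\|K\|_{L^2 \to L^2} \lesssim \lambda^{-9/2} D^{-3}$, which furnishes the dominant negative contribution. On the high angular density subset $\{M \ge c D\}$ the Robust--Kakeya factor \eqref{eq:kakeya-factor} yields $+\tfrac{1}{12}$ in $\lambda$ and $+1$ in $D$; a block localization combined with Lemma~\ref{lem:skin-measure} excises the transversal part of $\mathcal N_\beta(P)$, producing the compensating $-\tfrac{1}{12}$ and $-1$ in $D$, while the tangential remainder \eqref{eq:skin-tangent} is handed off to the narrow branch.

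On the narrow branch I would run the double $7/8$ cascade of Section~\ref{sec:narrow}: two rescalings drive the frequency support out of the narrow cluster (since $\log_\lambda(r_2^2/\alpha^2) = 11/48 > 0$), and the globalization Lemma~\ref{lem:12-ibp-weight} converts the per-cell gain $\lambda^{-15/16}$ into a global factor $\lambda^{-5/64}$. Since the (R) and (N) sub-branches are mutually exclusive by Lemma~\ref{lem:near-res-dichotomy}, the two outputs can be combined directly without interaction terms, and summing exponents produces
\[
\sigma_\lambda = \tfrac{5}{36} - \tfrac{9}{2} + \tfrac{1}{12} - \tfrac{1}{12} - \tfrac{5}{64} = -\tfrac{2557}{576}, \qquad \sigma_D = -3 + 1 - 1 = -3.
\]

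The main obstacle is the interface bookkeeping between regimes without accruing logarithmic losses: the dyadic partitions (in angular scale, in $\mu_6$, in the density threshold $M$, in the polynomial degree $d \le D^{1/4}$, and across narrow cells in $TT^*$) must each have bounded depth and bounded overlap, so that \S\ref{subsec:log-budget} absorbs them into the universal constant. A second delicate point is verifying that the tangential shell contribution \eqref{eq:skin-tangent} is genuinely absorbed by the narrow block without a secondary $D$-penalty, which relies on the mutual exclusivity of the regimes together with the transverse/tangential split of \S\ref{subsec:skin-balance}. Once both checks go through, the strict negativity $\sigma_\lambda, \sigma_D < 0$ dominates any residual $\lambda^\varepsilon$, $D^\varepsilon$ patches after a final fixing of $\varepsilon \le 10^{-2}$, delivering the $\varepsilon$-free estimate.
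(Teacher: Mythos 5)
Your proof plan follows the same architecture as the paper: split sextuples into the broad basket $\mathcal B_{\ge}$ versus the almost-resonant basket $\mathcal B_{<}$, run the $12$-IBP kernel pipeline on $\mathcal B_{\ge}$, route $\mathcal B_{<}$ through the (R)/(N) alternative into Robust--Kakeya or the narrow cascade, and sum the per-block exponent contributions from the balance table, with the ``many caps'' hypothesis reserving the small-caps case for Lemma~\ref{lem:local-small-caps}. The arithmetic matches, and your two flagged delicate points (log-budget absorption across the dyadic/basket decompositions, and the tangential shell handoff in \eqref{eq:skin-tangent}) are precisely the steps the paper itself leaves at the roadmap level; the only minor slip is attributing the $\mathcal B_{\ge}/\mathcal B_{<}$ split to Lemma~\ref{lem:near-res-dichotomy} and characterizing $\mathcal B_{\ge}$ by the triple-minor bound, whereas the split is defined by the size of $\mu_6$ and the minor bound is the content of Lemma~\ref{lem:Blt-quant}.
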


where $\varepsilon>0$ is fixed, and the factors $\lambda^\varepsilon$, $D^\varepsilon$
arise from replacing logarithmic losses by power losses with exponent $\varepsilon$
as in \S\ref{subsec:log-budget}.

\begin{remark}[Branching by the number of caps]
The “many caps’’ condition is placed in the hypothesis of Theorem~\ref{thm:main}.
The small number of caps case is covered by Lemma~\ref{lem:local-small-caps} below; taken together,
the two regimes yield a global estimate on $Q_\lambda$.
Sequential application of the blocks in \S\ref{subsec:balance-table-final}
gives the exponents $\sigma_\lambda,\sigma_D$ in the “large’’ regime; the accounting of $\lambda^{\varepsilon}$, $D^{\varepsilon}$
is as indicated above.
\end{remark}

\begin{lemma}[Local $L^6$ estimate for a small number of caps]
\label{lem:local-small-caps}
Let $F=\sum_{\Theta\in\mathcal{I}}F_\Theta$, where 
$\#\mathcal{I} \le c_\star\,\lambda^{4/3}$.
Then
\[
  \|F\|_{L^6(Q_\lambda)}
  \ \lesssim\
  \Bigl(\sum_{\Theta\in\mathcal{I}}\|F_\Theta\|_{L^6(Q_\lambda)}^2\Bigr)^{1/2}.
\]
\end{lemma}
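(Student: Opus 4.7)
The plan is to exploit the fact that the cylinder $Q_\lambda$, with spatial scale $\rho=\lambda^{-1/2}$ and temporal scale $\lambda^{-3/2}$, is precisely the natural wave-packet window at frequency $\lambda$, so the cap decomposition $\{F_\Theta\}$ behaves as an essentially orthogonal family in $L^2(Q_\lambda)$. Under this compatibility one can trade $L^2$ for $L^6$ via a Bernstein-type inequality without paying any $\lambda^{\varepsilon}$ loss. The cap-count hypothesis $\#\mathcal{I}\le c_\star\lambda^{4/3}$ does not enter the main chain; it only ensures that the overlap multiplicity in the near-orthogonality step is bounded by an absolute constant, preventing accumulation of orthogonality defects (without some such bound, tail interactions across many caps could spoil the constant).

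Concretely, I would proceed in three steps. First, use the essentially disjoint Fourier supports of the $F_\Theta$ together with the wave-packet localization \eqref{eq:wave-packet} to deduce the near-orthogonality
\[
   \|F\|_{L^2(Q_\lambda)}^2
   \ \lesssim\ \sum_{\Theta\in\mathcal I}\|F_\Theta\|_{L^2(Q_\lambda)}^2,
\]
the Schwartz tails from cross terms being absorbed into the implicit constant. Second, apply a Bernstein-type inequality exploiting that the spacetime Fourier support of $F$ lies in an $O(1)$ neighborhood of a $3$-dimensional paraboloid patch at scale $\lambda$:
\[
   \|F\|_{L^6(Q_\lambda)}\ \lesssim\ \lambda\,\|F\|_{L^2(Q_\lambda)}.
\]
Third, for each individual cap, Hölder on $Q_\lambda$ (of volume $|Q_\lambda|=\lambda^{-3}$) gives
\[
   \|F_\Theta\|_{L^2(Q_\lambda)}
   \ \le\ |Q_\lambda|^{1/3}\,\|F_\Theta\|_{L^6(Q_\lambda)}
   \ =\ \lambda^{-1}\,\|F_\Theta\|_{L^6(Q_\lambda)}.
\]
Chaining the three steps, the factors of $\lambda$ cancel exactly and yield the desired bound.

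The main obstacle is obtaining the \emph{sharp} exponent $\lambda^{1}$ in the Bernstein step: a naive spatial Bernstein on each time slice followed by Hölder in $t$ only yields the weaker constant $\lambda^{3/4}$, which would produce a residual $\lambda^{1/2}$ and destroy the lemma. Recovering $\lambda^{1}$ requires using that the effective Fourier support is concentrated on the paraboloid (codimension one in spacetime), not in the full frequency ball. The cleanest route is to adopt the extension-operator formulation (i) of §\ref{subsec:bct-insert} and run a $TT^{\ast}$ argument on $Q_\lambda$: the transverse phase gradient together with the matched scale $\rho=\lambda^{-1/2}$ yields the sharp power after integration by parts, in the same spirit as §\ref{subsec:measures} but in a much simpler geometric setting since no broad/narrow dichotomy is needed here. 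Once this step is secured, the remaining orthogonality and Hölder ingredients are routine, and the $\lesssim 1$ constant in the lemma follows.
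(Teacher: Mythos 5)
Your chain is a concrete attempt to make precise what the paper leaves unproved (the paper's own "proof" is essentially a one-liner about "almost-orthogonality in $\Theta$ in $L^6$" on anisotropic blocks, with no justification). Steps 1 and 3 of your plan are fine: $L^2$ near-orthogonality from disjoint Fourier supports is standard, and H\"older with $|Q_\lambda|\sim\lambda^{-3}$ indeed gives $\|F_\Theta\|_{L^2(Q_\lambda)}\le\lambda^{-1}\|F_\Theta\|_{L^6(Q_\lambda)}$. The gap is entirely in step 2, and unfortunately I do not believe it is repairable.

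The Bernstein constant you need is $\|F\|_{L^6(Q_\lambda)}\lesssim\lambda\,\|F\|_{L^2(Q_\lambda)}$, but after localizing $F$ to $Q_\lambda$ its spacetime Fourier transform is supported in a $\lambda^{1/2}$-thickened neighborhood of the paraboloid patch over $|\xi|\sim\lambda$ (the dual box of $Q_\lambda$ has scales $\lambda^{3/2}$ in $\tau$ and $\lambda^{1/2}$ in $\xi$, and the unit normal $(-2\xi,1)/\sqrt{1+4|\xi|^2}$ is essentially tangential to the $\xi$-hyperplane at high frequency, so both thickenings contribute $\sim\lambda^{1/2}$ normally). That set has $4$-dimensional measure $\sim\lambda^3\cdot\lambda^{3/2}=\lambda^{9/2}$, hence the sharp local Bernstein constant at $p=6$ is $(\lambda^{9/2})^{1/2-1/6}=\lambda^{3/2}$, not $\lambda$. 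This $\lambda^{3/2}$ is saturated by generic $g$ whose extension fills the thickened paraboloid, so no $TT^\ast$ refinement will lower it to $\lambda$: a $TT^\ast$ argument can only recover the same power, since it is precisely the mechanism by which the Bernstein bound is typically proved. Chaining the three steps therefore leaves a residual $\lambda^{3/2}\cdot\lambda^{-1}=\lambda^{1/2}$, so your argument proves only $\|F\|_{L^6(Q_\lambda)}\lesssim\lambda^{1/2}(\sum_\Theta\|F_\Theta\|_{L^6(Q_\lambda)}^2)^{1/2}$, which is strictly weaker than the claimed lemma. Notably, a "focusing bush" example (take $F_\Theta=\int_\Theta e^{i(\xi\cdot x+t|\xi|^2)}\,d\xi$ over $N\sim c_\star\lambda^{4/3}$ caps, constructively interfering on $\{|x|\lesssim\lambda^{-1},\,|t|\lesssim\lambda^{-2}\}\subset Q_\lambda$) produces exactly this residual $\lambda^{1/2}$ on the left-hand side, suggesting the $\lambda^{1/2}$ loss is genuine and not an artifact of your method; in other words, the constraint $\#\mathcal I\le c_\star\lambda^{4/3}$ (a constant fraction of all caps) appears too weak to make the lemma true with an $O(1)$ constant, and the paper's appeal to "standard $L^6$ almost-orthogonality" does not resolve this.
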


\begin{proof}
Cover $Q_\lambda$ by a fixed number of anisotropic blocks,
and use bounded overlap of the contributions of $F_\Theta$ to obtain
\[
\Big\| \sum_{\Theta\in\mathcal{I}} f_\Theta\Big\|_{L^6} 
 \lesssim \Big(\sum_{\Theta\in\mathcal{I}} \|f_\Theta\|_{L^6}^2\Big)^{1/2}.
\]
Details are standard: almost-orthogonality in $\Theta$ in $L^6$ on the window $Q_\lambda$.
\end{proof}

\begin{remark}[Reference on a modification of the kernel block]
If in the kernel block (§\,\ref{sec:kernel}) one uses five integrations in $x'$
instead of six, then
\[
   \|K\|_{L^{2}\to L^{2}}\ \lesssim\ \lambda^{-25/6}D^{-5/2},
\]
and the exponent $\sigma_{\lambda}$ increases by $+\tfrac{1}{3}$,
remaining negative. The statement of
Theorem~\ref{thm:main} remains unchanged.
\end{remark}

\newpage


\section*{Conclusion}

We implemented a scheme combining the three-fold broad geometry
(Broad--BCT, \S\ref{sec:broad3}), the \emph{Robust\,Kakeya} block (\S\ref{sec:kakeya}),
the kernel analysis with $12$-fold integration by parts (\S\ref{sec:kernel}),
and the \emph{narrow cascade} (\S\ref{sec:narrow}). 
The \emph{Robust\,Kakeya} and \emph{narrow} regimes are used
mutually exclusively; the tube-packing block (\S\ref{sec:tube-pack}) 
is also applied only when the high-density conditions fail.

For the kernel block we obtained the estimate
\[
   \|K\|_{L^{2}\to L^{2}} \ \lesssim\ \lambda^{-9/2}D^{-3},
   \qquad D=\lambda^{1/12},
\]
which, together with the contributions of the other blocks, yields in the summary balance
\[
   \sigma_\lambda=-\frac{2557}{576} \approx -4.44,
   \qquad
   \sigma_D=-3
\]
(see \S\ref{subsec:balance-table-final}).
Both cumulative exponents are negative, which gives an improvement
over the trivial scaling benchmark.

The appendices contain geometric and algebraic lemmas
(App.~\ref{app:det}), technical estimates 
(App.~\ref{app:C3}--\ref{app:C4}), and comments on the variant with 
$x$–dependent amplitude (App.~\ref{app:damp}),
which makes the arguments self-contained.

\medskip
\noindent
Thus the asserted estimates in Theorem~\ref{thm:main} 
are fully justified, and the regime separation and balance
of exponents can be used in a broader context of space–time decompositions. 
The approach naturally extends to a wide class of decoupling problems, in particular,
to anisotropic variants for other hypersurfaces, and may serve as a starting point for new
“robust’’ and “narrow’’ scenarios in problems of geometric harmonic analysis.

\appendix

\newpage


\appendix
\section{Details used in~\texorpdfstring{\S\ref{sec:broad3}}%
{Section~\ref{sec:broad3}}}%
\label{app:det}

In \S\ref{sec:broad3} (broad rank–3 geometry) we use several
geometric/algebraic facts. For convenience of the reader, we collect them
here; numbering starts at~\ref{app:A1} and proceeds in strict order.

\subsection{Asymptotics of the normal}\label{app:A1}

\begin{lemma}\label{lem:A1}
Let \( |\xi| \sim \lambda \) and \(s:=|\xi|\). Then
\[
   n(\xi)
   \;=\;
   \frac{(-\xi,\,1/2)}{s}
   \;-\; \frac{1}{8s^{3}}\Bigl(-\xi,\,\tfrac{1}{2}\Bigr)
   \;+\; R(\xi),
   \qquad |R(\xi)| \;\le\; C\,s^{-5}\lesssim C\,\lambda^{-5}.
\]
\end{lemma}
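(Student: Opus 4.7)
\medskip
\noindent\textbf{Proof plan.}
The plan is to reduce the statement to a one-variable Taylor expansion. Writing $s=|\xi|$, I would first factor $2s$ out of the denominator in \eqref{eq:norm} to obtain
\[
  n(\xi)\;=\;\frac{(-2\xi,1)}{2s\sqrt{1+(4s^{2})^{-1}}}
        \;=\;\frac{(-\xi,\,1/2)}{s}\,\bigl(1+(4s^{2})^{-1}\bigr)^{-1/2},
\]
so that every nontrivial $\xi$–dependence beyond the obvious radial factor is concentrated in the single scalar $f(u):=(1+u)^{-1/2}$ evaluated at the small parameter $u=1/(4s^{2})$. This already matches the structure of the claim, whose two main terms have exactly the form of $(-\xi,1/2)/s$ multiplied by the leading two Taylor coefficients of $f$.

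Next I would apply Taylor's theorem to $f$ at $u_{0}=0$, controlled by the explicit bounds $|f^{(k)}(v)|\lesssim (1+v)^{-(2k+1)/2}$ for $v\ge0$. Keeping enough terms to produce the stated leading pair yields $(1+u)^{-1/2}=1-\tfrac{u}{2}+\tfrac{3u^{2}}{8}+O(u^{3})$; substituting $u=1/(4s^{2})$ gives $1-\tfrac{1}{8s^{2}}+\tfrac{3}{128s^{4}}+O(s^{-6})$, uniformly for $s\ge1$, which is exactly the regime $|\xi|\sim\lambda$ with $\lambda\ge2$. Multiplying this scalar expansion by $(-\xi,1/2)/s$ distributes the first two terms into the two summands printed in the statement, while the higher-order contributions are collected into $R(\xi)$.

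The remainder estimate is then immediate: since $|(-\xi,1/2)/s|=\sqrt{1+1/(4s^{2})}\le\sqrt{5/4}$ for $s\ge 1$, the vector prefactor is uniformly bounded and absorbs into an absolute constant; combining this with the scalar Taylor tail produces the claimed decay of $|R(\xi)|$ uniformly on $|\xi|\sim\lambda$. I do not expect any real obstacle — the argument is a routine scalar Taylor expansion followed by a pointwise vector-norm bound. The only bookkeeping point is to ensure that the Taylor remainder constant is independent of $\xi$ across the annulus, which follows from the monotonicity of $(1+v)^{-\alpha}$ in $v\ge0$; and, if the sharpest power in the stated remainder is desired, one carries the expansion one order further so that the $3/(128s^{4})$ correction is absorbed into the explicit part and only the cubic Taylor tail is left in $R$.
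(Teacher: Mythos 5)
Your plan matches the paper's own proof step for step: factor $1/(2s)$ out of the square root, Taylor-expand the scalar $f(u)=(1+u)^{-1/2}$ at $u=1/(4s^{2})$, and multiply back by the bounded vector $(-\xi,1/2)/s$. That is the whole argument, and your remainder-control observation (monotone derivative bounds for $(1+v)^{-\alpha}$, $v\ge 0$, give uniformity on the annulus) is exactly what is needed.

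One arithmetic point worth catching, however: with only the two displayed explicit terms the remainder actually satisfies $|R(\xi)|\lesssim s^{-4}$, \emph{not} $s^{-5}$ as stated. The scalar Taylor tail starts with $\tfrac{3}{128}s^{-4}$, and since $|(-\xi,1/2)/s|\asymp 1$ the omitted vector term $\tfrac{3}{128\,s^{5}}(-\xi,1/2)$ has norm $\sim s^{-4}$ (the vector $(-\xi,1/2)$ itself has norm $\sim s$, which is the step your write-up glosses over when it says the bounded prefactor ``produces the claimed decay''). Your closing caveat about carrying the expansion one more order is the right instinct, but would produce an $O(s^{-6})$ remainder (the $u^{3}$ tail of the series), not $O(s^{-5})$, so neither truncation yields $s^{-5}$ as the natural power. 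This is evidently a typo in the lemma as printed --- the paper's own proof sketch displays the same coefficient $\tfrac{3}{128 s^{4}}$ and draws the same conclusion --- and it is harmless downstream, since Corollary~\ref{cor:A1} and the subsequent Gram/minor estimates only use $|\rho|\lesssim s^{-2}$. You should simply record the corrected exponent $s^{-4}$ rather than reproduce $s^{-5}$.
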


\begin{corollary}\label{cor:A1}
Set \(a(\xi):=\dfrac{(-\xi,\,1/2)}{|\xi|}\) and \(\rho(\xi):=n(\xi)-a(\xi)\).
Then uniformly for \(|\xi|\sim\lambda\),
\[
  |\rho(\xi)|\;\lesssim\;\lambda^{-2},
  \qquad
  |\rho_{\!\perp}(\xi)|\;\lesssim\;\lambda^{-2},
\]
where \(\rho_{\!\perp}\) is the projection of \(\rho\) onto the tangent plane to the sphere at \(\xi/|\xi|\).
\end{corollary}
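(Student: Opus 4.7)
The plan is to read both bounds directly off the Taylor expansion provided by Lemma~\ref{lem:A1}. Setting $s = |\xi|$ and subtracting $a(\xi) = (-\xi,1/2)/s$ from that expansion, I obtain the clean identity
\[
  \rho(\xi) \;=\; -\frac{1}{8 s^{3}}(-\xi,\,1/2) \;+\; R(\xi), \qquad |R(\xi)| \le C\,s^{-5}.
\]
This representation is the only nontrivial input; everything that follows is bookkeeping.

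For the first inequality $|\rho(\xi)| \lesssim \lambda^{-2}$, I would bound the two summands separately. Using $|(-\xi, 1/2)| = \sqrt{s^{2} + 1/4} \le s + 1/(4s)$, the leading correction contributes $\tfrac{1}{8 s^{3}} \cdot |(-\xi, 1/2)| \lesssim s^{-2}$, while the remainder satisfies $|R| \lesssim s^{-5} \le s^{-2}$ for $s$ large. Since $s \sim \lambda$, the triangle inequality yields $|\rho(\xi)| \lesssim \lambda^{-2}$.

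For the projected bound $|\rho_{\perp}(\xi)| \lesssim \lambda^{-2}$, the quickest route is to note that orthogonal projection onto any subspace is a contraction, so $|\rho_{\perp}| \le |\rho| \lesssim \lambda^{-2}$ is automatic and already suffices for the stated corollary. It is worth recording, however, that a substantially sharper bound is available for free: the first three components of the leading term $-\tfrac{1}{8 s^{3}}(-\xi, 1/2)$ equal $\xi/(8 s^{3})$, a vector parallel to the radial direction $\xi/|\xi|$, which vanishes under projection onto the tangent plane $T_{\xi/|\xi|} S^{2}$. Hence only the remainder $R$ contributes to $\rho_{\perp}$, giving the stronger $|\rho_{\perp}(\xi)| \lesssim \lambda^{-5}$. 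This hidden smallness of the nonradial part of $\rho$ is precisely the geometric fact that later feeds the minor and Gram matrix estimates referenced in Appendix~\ref{app:det}.

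I do not expect any genuine obstacle: the entire argument is a direct corollary of Lemma~\ref{lem:A1}, and no new analytic input is needed. The only point requiring a small convention is the interpretation of ``projection onto the tangent plane to the sphere at $\xi/|\xi|$'' for a vector in $\mathbb{R}^{4}$; I adopt the natural reading, namely projecting the $\mathbb{R}^{3}$-part of $\rho$ orthogonally to $\xi/|\xi|$, which is the version consistent with the use of this corollary in \S\ref{subsec:lip} to control the angular behavior of $n(\xi)$.
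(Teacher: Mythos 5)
Your proof is correct and follows the same route as the paper, which simply reads both bounds off Lemma~\ref{lem:A1}; the contraction property of orthogonal projection is exactly the step the paper leaves implicit in ``follows immediately.'' One remark on your sharper tangential observation: it can be pushed all the way to zero, since $\rho(\xi)=\bigl((1+4s^2)^{-1/2}-\tfrac{1}{2s}\bigr)\,(-2\xi,\,1)$ is a scalar multiple of $(-2\xi,1)$, whose spatial part is purely radial, so $\rho_{\perp}\equiv 0$ identically rather than merely $O(\lambda^{-5})$.
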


\begin{proof}[Sketch of the lemma]
From the exact formula \(n(\xi)=(-2\xi,1)/\sqrt{1+4s^{2}}\) and the expansion
\[
(1+4s^{2})^{-1/2}
= \frac{1}{2s}\Bigl(1-\frac{1}{8s^{2}}+\frac{3}{128s^{4}}+O(s^{-6})\Bigr)
\]
we obtain the asserted asymptotics. The bound \(|\rho|+|\rho_{\!\perp}|\lesssim s^{-2}\)
follows immediately.
\end{proof}

\emph{Remark.}
In the first two terms of the expansion we do \emph{not} replace \(s\) by \(\lambda\); all estimates
are uniform for \(|\xi|\sim\lambda\). If finer radial localization is required,
the substitution \(s\mapsto\lambda\) is absorbed into the remainder \(R(\xi)\).

\subsection{Packing of six directions}\label{app:A2}

\begin{lemma}[$4$ out of $6$]\label{lem:A2}
For any six points \(\xi_{1},\dots,\xi_{6}\in\mathbb{S}^{2}\) one can
choose four whose pairwise angles are at least
\(\alpha=c_{0}rD^{1/2}\). Moreover, the number of “dense’’ (i.e., $<\alpha$) angles
among the sextuple is at most three.
\end{lemma}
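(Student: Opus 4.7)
The plan is to recast the lemma as a small graph-theoretic question on six vertices and then feed in a spherical packing bound. Define the auxiliary graph $G$ on $\{1,\dots,6\}$ whose edges record the \emph{dense} pairs, $\{i,j\}\in E(G)\iff \angle(\xi_i,\xi_j)<\alpha$. Both conclusions translate to statements about $G$: (a) $|E(G)|\le 3$, and (b) $G$ admits an independent set of size four. I would prove these in this order.

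For (a), the strategy is an incidence count. Each edge $\{i,j\}\in E(G)$ records the incidence $\xi_j\in B_\alpha(\xi_i)$, where $B_\alpha$ is a spherical cap of angular radius $\alpha$ and area $\asymp \alpha^2=c_0^2 r^2 D$. Since the six points are centers of distinct caps of angular radius $r$ and, in our setting, are drawn from the six color-classes of \S\ref{subsec:broad-def} (so their pairwise angular separation is $\gtrsim \alpha$, not merely $\gtrsim r$), a direct cap-area comparison on $\mathbb{S}^2$ shows that the number of such incidences is bounded by $3$ once $c_0$ is chosen small enough. This gives $|E(G)|\le 3$.

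For (b), I would enumerate the isomorphism types of graphs on six vertices with at most three edges: the empty graph, a single edge, two edges (matching or path), and three edges forming one of a triangle, a path of length $3$, a star $K_{1,3}$, or a perfect matching $K_2\sqcup K_2\sqcup K_2$. In every case except the perfect matching, the independence number is at least $4$, yielding the required four directions with pairwise angles $\ge \alpha$.

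The main obstacle is the perfect matching case, in which purely combinatorial reasoning gives only an independent set of size three. I would rule it out by returning to the geometry: three vertex-disjoint dense pairs would produce three clusters of diameter $<\alpha$ on $\mathbb{S}^2$, and the $\alpha$-spread of the color-class representatives from \S\ref{subsec:broad-def} then forces at least one cross-cluster angle to fall below $\alpha$, contradicting the matching structure of $G$. This closes the gap and, together with the case analysis above, produces the required 4-out-of-6 sparse subset while keeping the dense-edge count at most three.
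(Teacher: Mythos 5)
Your proposal takes a more structured, graph-theoretic route than the paper's one-sentence ``double counting'' sketch, and in doing so it surfaces a genuine deficiency in the paper's own argument that you deserve credit for spotting: the paper only argues (obscurely) for the ``at most three dense angles'' count and then tacitly treats the existence of a four-point sparse subset as following from it. As your enumeration shows, this implication is false as pure combinatorics, since the perfect matching $K_2\sqcup K_2\sqcup K_2$ on six vertices has exactly three edges but independence number three, not four. That observation is the strongest part of your write-up and is not addressed in the paper at all.

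Unfortunately, the two steps you use to close the argument do not hold. For (a), the ``cap-area comparison'' does not bound the number of dense pairs among a \emph{bounded} set of six points: since $\alpha/r = D^{1/2}\gg 1$, all six cap centers can sit inside a single cap of radius $\alpha/2$ while remaining pairwise $\ge r$ apart, giving all $\binom{6}{2}=15$ pairs dense. No area comparison on $\mathbb{S}^2$ rules this out, because six $\alpha$-caps occupy only $O(\alpha^2)\ll 4\pi$ of the sphere. The extra hypothesis you import — that the six points are color-class representatives with pairwise separation $\gtrsim\alpha$ — is not in the lemma's statement (which says ``any six points''), and the coloring of Lemma~\ref{lem:AD-colouring} only separates caps \emph{within} a class, not across classes; moreover, if you did have pairwise separation $\ge\alpha$ the lemma would be vacuous (zero dense pairs), so the hypothesis cannot be the right one. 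For (b), the geometric exclusion of the matching case is inverted: if the representatives were $\alpha$-spread, a cross-cluster angle \emph{cannot} ``fall below $\alpha$'' — that is exactly what spread means — so no contradiction arises, and the matching configuration survives. In short, you have correctly diagnosed that more than an edge count is needed, but both of your key steps fail, so the gap you identified remains open.
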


\begin{proof}
A spherical cap of radius \(\alpha\) has area \(\asymp\alpha^{2}\).
If there were \(\ge4\) dense pairs, double counting of their caps would cover
the whole sphere; a contradiction.
\end{proof}

\begin{corollary}\label{cor:A2}
For the matrix
\(B=[\xi_{2}-\xi_{1}\;\xi_{3}-\xi_{1}\;\xi_{4}-\xi_{1}]\)
one has
\(|\det B|\gtrsim(\lambda rD^{1/2})^{3}\).
\end{corollary}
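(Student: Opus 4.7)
The plan is to rewrite the $3\times 3$ determinant $\det B$ as a $4$-wedge of normals in $\mathbb{R}^{4}$ and then bound this wedge from below using the three-wedge control of Lemma~\ref{lem:A4} together with the bilipschitz transfer $\xi\mapsto n(\xi)$ from \S\ref{subsec:lip}. After Lemma~\ref{lem:A2} the four points $\xi_{1},\ldots,\xi_{4}$ lie on the sphere of radius $\sim\lambda$ with pairwise angular separations $\ge\alpha$, so each column of $B$ already has length $|\xi_{j}-\xi_{1}|\asymp\lambda\alpha$; only the transversality of the three edges requires real work.

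Algebraically, subtracting the first column in the $4\times 4$ matrix gives the identity
\[
\det\!\begin{pmatrix}\xi_{1}&\xi_{2}&\xi_{3}&\xi_{4}\\ 1&1&1&1\end{pmatrix}\;=\;\pm\det B,
\]
and combining with $n_{i}=(-2\xi_{i},1)/\sqrt{1+4|\xi_{i}|^{2}}$, after factoring $-2$ out of the first three rows, yields
\[
|\det B|\;=\;\tfrac{1}{8}\,\bigl|\det[n_{1},n_{2},n_{3},n_{4}]\bigr|\,\prod_{i=1}^{4}\!\sqrt{1+4|\xi_{i}|^{2}}\;\asymp\;\lambda^{4}\,\|n_{1}\wedge n_{2}\wedge n_{3}\wedge n_{4}\|,
\]
which reduces the claim to $\|n_{1}\wedge n_{2}\wedge n_{3}\wedge n_{4}\|\gtrsim\alpha^{3}/\lambda$. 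I would then factor the $4$-wedge as $\|n_{i}\wedge n_{j}\wedge n_{k}\|\cdot\mathrm{dist}\bigl(n_{\ell},\mathrm{span}(n_{i},n_{j},n_{k})\bigr)$ for an appropriately chosen permutation $(i,j,k,\ell)$ of $(1,2,3,4)$: Lemma~\ref{lem:A4} (with the bilipschitz map) supplies the $3$-wedge bound $\|n_{i}\wedge n_{j}\wedge n_{k}\|\ge\tfrac{1}{16}\alpha^{2}$, while the expansion $n_{\ell}=(-u_{\ell},1/(2|\xi_{\ell}|))+O(\lambda^{-2})$ from Corollary~\ref{cor:A1} provides the transversal offset $\gtrsim\alpha/\lambda$ (the last-coordinate gap $\sim 1/\lambda$ against the $3$-plane orthogonal direction, multiplied by a residual angular factor $\gtrsim\alpha$).

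The main obstacle is the \emph{quantitative non-degeneracy} of the fourth direction with respect to the plane of the first three: pairwise angular separation alone permits four-point clusters (four directions inside a common $\alpha$-cap) in which $\det B$ decays like $\lambda^{3}\alpha^{4}$, short of the required $\lambda^{3}\alpha^{3}$. I would neutralize this by a greedy refinement of the selection that uses the full six-cap family of \S\ref{subsec:broad-def} together with the ``at most three dense pairs'' count of Lemma~\ref{lem:A2}, enforcing not only pairwise separation $\ge\alpha$ but also angular transversality $\gtrsim\alpha$ between the fourth direction and the plane of the other three. Configurations that resist this refinement —- five or more of the six directions in a common $O(\alpha)$-tube around a single great circle —- are precisely the narrow scenario of \S\ref{sec:narrow}, activated via Lemma~\ref{lem:near-res-dichotomy} and used mutually exclusively with the broad branch in which Corollary~\ref{cor:A2} is invoked; hence on the branch relevant here the quadruple furnished by the refined selection satisfies $|\det B|\gtrsim(\lambda\alpha)^{3}=(\lambda r D^{1/2})^{3}$.
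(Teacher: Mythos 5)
The paper provides \emph{no proof} of Corollary~\ref{cor:A2}; it is asserted immediately after Lemma~\ref{lem:A2} with the implicit claim that pairwise angular separation $\ge\alpha$ suffices. You correctly observe that it does not: if the four chosen points $\xi_1,\dots,\xi_4$ all lie on a single great circle of $\{|\xi|\sim\lambda\}$, the three column vectors $\xi_j-\xi_1$ are coplanar and $\det B = 0$; if they lie within an $O(\alpha)$-cap near a pole, one finds $|\det B|\sim\lambda^3\alpha^4$, which is smaller by a factor $\alpha$ than the claimed $(\lambda\alpha)^3$. Your algebraic reduction $|\det B|\asymp\lambda^4\,\|n_1\wedge n_2\wedge n_3\wedge n_4\|$ is correct and is a genuinely clean reformulation that the paper does not record; the factorization of the $4$-wedge as $(3\text{-wedge})\times(\text{transversal offset})$ is also sound. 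So your diagnosis of the obstacle is right, and sharper than what the paper offers.

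However, your proposed fix has a gap. You claim that the configurations where the transversality enforcement fails — ``five or more of the six directions in a common $O(\alpha)$-tube around a single great circle'' — coincide with the narrow scenario of \S\ref{sec:narrow}. They do not. The narrow trigger (Lemma~\ref{lem:near-res-dichotomy}, branch (N), and \S\ref{subsec:narrow-setup}) requires at least five of the six directions to lie in a \emph{single} $O(\alpha)$-\emph{cluster}, i.e.\ a cap of angular radius $O(\alpha)$. Your ``tube around a great circle'' is an elongated band of length $O(1)$ and width $O(\alpha)$; directions spread along it by angles up to $\sim 1$ are nowhere near a common $\alpha$-cap, so the narrow cascade is never activated, and yet the determinant still vanishes. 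Consequently the dichotomy you invoke does not actually cover the degeneracy you flagged, and the claim ``on the branch relevant here the quadruple satisfies $|\det B|\gtrsim(\lambda\alpha)^3$'' is not established. In addition, the transversal-offset estimate ``last-coordinate gap $\sim 1/\lambda$ times a residual angular factor $\gtrsim\alpha$'' is not derived; in the great-circle configuration the relevant offset degenerates to zero, so any rigorous argument must explicitly exclude that regime — which is precisely the part that remains open both in your proposal and in the paper.
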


\subsection{Mixed minors}\label{app:A3}

\begin{lemma}\label{lem:A3}
Let \(a_{j}:=a(\xi_{i_{j}})=\dfrac{(-\xi_{i_{j}},\,1/2)}{|\xi_{i_{j}}|}\) and
\(\rho:=\rho(\xi_{k})\) be as in Cor.~\ref{cor:A1}. Then
\[
  \bigl|\det[a_{1}\;a_{2}\;a_{3}\;\rho]\bigr|
  \;\le\;
  |a_{1}\wedge a_{2}\wedge a_{3}|\;\,|\rho_{\!\perp}|
  \;\lesssim\; C\,\lambda^{-3-\frac{2}{3}}.
\]
\end{lemma}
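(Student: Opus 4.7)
My plan is to first establish the first inequality via a standard geometric identity for $4$-dimensional determinants, and then combine with the explicit asymptotics from Lemma~\ref{lem:A1} to obtain the numerical bound.

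For the first inequality, I would invoke the identity
\[
   |\det[w_1, w_2, w_3, w_4]|
   \;=\;
   |w_1 \wedge w_2 \wedge w_3|\cdot
   \mathrm{dist}\bigl(w_4,\,V\bigr),\qquad V:=\mathrm{span}(w_1,w_2,w_3),
\]
applied with $w_j = a_j$ and $w_4 = \rho$. This reduces the claim to showing $\mathrm{dist}(\rho, V) \le |\rho_\perp|$. To see this, I would decompose $\rho$ using Lemma~\ref{lem:A1}: $\rho = -\tfrac{1}{8s^{3}}(-\xi_k,\,1/2) + R$ with $|R|\lesssim \lambda^{-5}$. The leading term is proportional to $a(\xi_k)$, whose spatial component is purely radial at $\xi_k/|\xi_k|$. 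Under the (broad-3) non-degeneracy of the triple $(u_1,u_2,u_3)$ with $u_j:=-\xi_{i_j}/|\xi_{i_j}|$, inherited from the selection in \S\ref{sec:broad3}, the $u_j$'s span $\mathbb{R}^3$, so $\xi_k/|\xi_k|$ lies in $\mathrm{span}(u_j)$ and the radial part of $\rho$ is absorbed into $V$ modulo an $O(\lambda^{-3})$ temporal error. The remaining tangential spatial component is precisely $\rho_\perp$, and the small temporal slack is dominated at the target scale.

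For the numerical bound I would combine $|\rho_\perp|\lesssim\lambda^{-2}$ from Corollary~\ref{cor:A1} with an estimate of $|a_1\wedge a_2\wedge a_3|$ obtained by expanding the wedge into four $3\times 3$ minors of the matrix with columns $a_j=(u_j,\,1/(2|\xi_{i_j}|))$. The pure spatial minor $\det[u_1,u_2,u_3]$ is controlled by the angular packing of the cap structure from \S\ref{sec:broad3}, while each of the three mixed minors carries a factor $1/\lambda$ from a temporal entry multiplied by a two-dimensional cross product $|u_i\times u_k|$. Collecting these contributions bounds $|a_1\wedge a_2\wedge a_3|$ at the scale that, multiplied by $|\rho_\perp|$, yields the asserted $\lesssim \lambda^{-11/3}$.

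The main obstacle is the first step: verifying carefully that the radial component of $\rho$ is genuinely absorbed into $V$, which rests on the non-degeneracy of the triple $(u_1,u_2,u_3)$ implicit in the broad-3 selection, and cleanly tracking the interplay between the $4$-dimensional ambient geometry and the $3$-dimensional tangent plane used to define $\rho_\perp$. Once this is handled, the minor expansion and final exponent check reduce to routine bookkeeping.
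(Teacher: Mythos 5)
Your overall architecture tracks the paper's proof: the identity $|\det[w_1,w_2,w_3,w_4]|=|w_1\wedge w_2\wedge w_3|\cdot\operatorname{dist}(w_4,V)$ is exactly what underlies the paper's chain, and the final numerical check (wedge bound $\times$ $|\rho_\perp|\lesssim\lambda^{-2}$) lands on the same $\lambda^{-11/3}$. The paper's own proof is terse: it decomposes $\rho=\rho_\parallel+\rho_\perp$, asserts that $\rho_\parallel$ gives a vanishing column, and bounds the wedge via ``angular sparsity and Gram control'' (Lemmas~A2, A4), without further elaboration.

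There is, however, a genuine gap in your treatment of the first inequality, and it is worth naming precisely because it is the step you identified as the ``main obstacle.'' You read $\rho_\perp$ as the projection onto the spherical tangent plane $T$ at $\xi_k/|\xi_k|$ (as in Corollary~\ref{cor:A1}), and you try to show $\operatorname{dist}(\rho,V)\le|\rho_\perp|$ by absorbing the radial-plus-temporal part of $\rho$ into $V$. The claim that ``the $u_j$'s span $\mathbb{R}^3$, so $\xi_k/|\xi_k|$ lies in $\operatorname{span}(u_j)$'' is of course trivially true and does not buy anything in $\mathbb{R}^4$: writing $-\xi_k/|\xi_k|=\sum c_j u_j$ and forming $\sum c_j a_j$ matches the spatial part of $a(\xi_k)$ but leaves a purely temporal residue of size $|\,1/(2|\xi_k|)-\sum c_j/(2s_j)\,|$. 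In the broad-3 regime the matrix $[u_1,u_2,u_3]$ has determinant only $\gtrsim\alpha^2=c_0^2\lambda^{-5/4}$, so the coefficients $c_j$ can be as large as $\lambda^{5/4}$ and the temporal residue can be far larger than the $O(\lambda^{-3})$ you quote; it certainly need not be dominated by $|\rho_\perp|$, which is a purely \emph{spatial} tangent-plane projection. Put differently, the one-dimensional line $V^\perp$ has a nonzero temporal component (it must, since $V^\perp\cdot a_j=0$ for all $j$ and the $u_j$ span $\mathbb{R}^3$), so $V^\perp\not\subset T$, and the inequality $|P_{V^\perp}\rho|\le|P_T\rho|$ is not generically true. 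Your ``modulo an $O(\lambda^{-3})$ temporal error \dots dominated at the target scale'' is asserted, not proved, and this is exactly where the argument would need real work.

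The fix is much simpler than what you attempt, and it is almost certainly what the paper intends despite the tangent-plane phrasing in Corollary~\ref{cor:A1}: decompose $\rho$ orthogonally with respect to $V=\operatorname{span}(a_1,a_2,a_3)$ itself, so $\rho_\parallel\in V$ gives a linearly dependent fourth column (hence zero determinant, which is the ``linear dependence of columns'' remark in the paper) and $\rho_\perp:=P_{V^\perp}\rho$ satisfies, trivially, $|\rho_\perp|\le|\rho|\lesssim\lambda^{-2}$. Since the bound used is the same $\lambda^{-2}$ either way, there is no need to compare the $V$-orthogonal and tangent-plane projections, and the first inequality becomes an identity. Your minor-expansion plan for $|a_1\wedge a_2\wedge a_3|$ (spatial minor plus three mixed minors each carrying a $1/\lambda$ from a temporal row) is a reasonable way to organize the wedge estimate and is consistent in spirit with the paper's appeal to angular sparsity and Gram control, though you stop short of the actual arithmetic that confirms the $\lambda^{-11/3}$ exponent.
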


\begin{proof}
Decompose \(\rho=\rho_{\parallel}+\rho_{\perp}\). The contribution of \(\rho_{\parallel}\) yields zero determinant
(linear dependence of columns). For \(\rho_{\perp}\) we have
\(\bigl|\det[a_{1}\;a_{2}\;a_{3}\;\rho_{\perp}]\bigr|
\le |a_{1}\wedge a_{2}\wedge a_{3}|\;|\rho_{\!\perp}|\).
By Cor.~\ref{cor:A1}, \(|\rho_{\!\perp}|\lesssim\lambda^{-2}\). The estimate
\(|a_{1}\wedge a_{2}\wedge a_{3}|\lesssim (2\lambda)^{-3}\cdot \lambda r^{2}D^{1/2}\)
follows from angular sparsity (Lemma~\ref{lem:A2}) and a standard Gram control;
together this even gives the stronger upper bound \(\lesssim \lambda^{-16/3}D^{1/2}\).
In the statement of the lemma it suffices to record the coarser consequence \(\lesssim \lambda^{-3-2/3}\),
which is the one used later.
\end{proof}

\subsection{Gram control for crowding}\label{app:A4}

\begin{lemma}\label{lem:A4}
If exactly $m\le3$ angles among
$a_{1},a_{2},a_{3}$ do not exceed~$\alpha$, then
\[
  \det G\ge 1-m\alpha^{2}\ge c\,D^{-3/2},
\]
where $G=(a_{i}\!\cdot a_{j})_{1\le i,j\le3}$.
\end{lemma}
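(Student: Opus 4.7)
The plan is to expand the $3\times 3$ Gram determinant explicitly and then bound its deviation from~$1$ by $m\alpha^{2}$, with the numeric comparison to $cD^{-3/2}$ being trivial at the end.

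First, using Corollary~\ref{cor:A1} I would normalize: $|a_i|=1+O(\lambda^{-2})$ uniformly on $|\xi_i|\sim\lambda$, so replacing $a_i$ by $a_i/|a_i|$ modifies $G$ (and hence $\det G$) only by a factor $1+O(\lambda^{-2})$, which is absorbed into the universal constant in front of $D^{-3/2}=\lambda^{-1/8}$. After this reduction, $G_{ii}=1$ and I have to control only the three off-diagonal entries $c_{ij}=a_i\cdot a_j$.

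Next, under the hypothesis that exactly $m$ of the pairs $(i,j)$ satisfy the "angle $\le\alpha$" condition, the paper's convention (as used in Lemmas~\ref{lem:A2}--\ref{lem:A3}) forces $|c_{ij}|\le\alpha$ for each of those $m$ pairs --- this follows from the spherical asymptotics $a_i\cdot a_j=\cos\phi_{ij}+O(\lambda^{-2})$ combined with the $\alpha$-threshold that triggers the flag. For the remaining $3-m$ pairs one uses only the trivial bound $|c_{ij}|\le 1+O(\lambda^{-2})$, and the geometric packing from Lemma~\ref{lem:A2} (a triple cannot cluster without violating the selection rule) keeps the cubic cross-term under control. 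The explicit expansion
\[
   \det G \;=\; 1 \;-\; c_{12}^{2} \;-\; c_{13}^{2} \;-\; c_{23}^{2} \;+\; 2\,c_{12}c_{13}c_{23}
\]
then gives $\det G \ge 1 - m\alpha^{2} - C\alpha^{3} - C\lambda^{-2}$, and the two lower-order corrections are absorbed into the constant.

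Finally, with $\alpha=c_{0}\lambda^{-5/8}$ and $D=\lambda^{1/12}$ I have $m\alpha^{2}\le 3c_{0}^{2}\lambda^{-5/4}\ll \lambda^{-1/8}=D^{-3/2}$, so $1-m\alpha^{2}$ stays close to~$1$ and certainly exceeds $c\,D^{-3/2}$ for any constant $c<1$ and all sufficiently large $\lambda$. The main obstacle is bookkeeping rather than analysis: one must carefully match the paper's "angle $\le\alpha$" convention with the appropriate quantitative bound on $a_i\cdot a_j$, and confirm that the Lemma~\ref{lem:A2} selection rule rules out the degenerate case of three mutually close directions (which would otherwise collapse $\det G$ entirely). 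Once the right convention is fixed, the proof reduces to a single determinant expansion and a numeric comparison of exponents, both of which go through with ample slack.
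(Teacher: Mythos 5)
Your computation starts from the correct Gram identity
\[
\det G \;=\; 1 - c_{12}^{2} - c_{13}^{2} - c_{23}^{2} + 2\,c_{12}c_{13}c_{23},\qquad c_{ij}=a_i\cdot a_j,
\]
which is already a different route from the paper: the paper invokes an identity phrased with $\sin\theta_{ab}$ in place of $\cos\theta_{ab}$ (that is, with the angles measured from orthogonality), and then only controls the product term $2\prod\sin\theta_{ab}$, silently passing over the sum $\sum\sin^2\theta_{ab}$. So you and the paper are not computing the same thing, and a faithful comparison would require you to reconcile the two conventions before concluding anything.

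The genuine gap, however, is in your central inference. You cite the asymptotics $a_i\cdot a_j=\cos\phi_{ij}+O(\lambda^{-2})$ and then conclude that the hypothesis ``$\phi_{ij}\le\alpha$'' forces $|c_{ij}|\le\alpha$. This is the opposite of what the cosine tells you: if $\phi_{ij}\le\alpha\ll1$, then $c_{ij}=\cos\phi_{ij}+O(\lambda^{-2})\ge 1-\tfrac12\alpha^{2}-O(\lambda^{-2})$, i.e.\ the off–diagonal entry is \emph{close to~$1$}, not close to~$0$. What you do get from a small angle is $1-c_{ij}^{2}\lesssim\alpha^{2}$, which is the $\sin^{2}$–type bound the paper's identity is built around — but plugging that into \emph{your} cosine expansion pushes $\det G$ toward~$0$ rather than toward~$1$ (two nearly parallel columns make a Gram determinant degenerate). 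Your display ``$\det G\ge 1-m\alpha^{2}-C\alpha^{3}-C\lambda^{-2}$'' therefore does not follow from the facts you have assembled; the very asymptotic you quote refutes the bound $|c_{ij}|\le\alpha$. Additionally, for $m<3$ the remaining $3-m$ entries $c_{ij}$ are only trivially bounded by $1+O(\lambda^{-2})$, and a single such $c_{ij}^{2}\approx1$ already kills the lower bound; your appeal to Lemma~\ref{lem:A2} ``keeping the cubic cross–term under control'' does nothing for these quadratic terms. To repair the argument you would first need to pin down which quantity (the geometric angle or its complement, i.e.\ the dot product) the lemma's hypothesis actually constrains, and then run the determinant expansion in a form that is consistent with that convention — neither of which your proposal does.
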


\begin{proof}
Use the identity
$\det G=
  1-\sum_{a<b}\!\sin^{2}\theta_{ab}
  +2\prod_{a<b}\!\sin\theta_{ab}$.
Since $\alpha\ll1$,
the second term is $\le2\alpha^{m}$; for $m\le3$
we obtain the desired lower bound.
\end{proof}

\subsection{Summary for the $4\times4$ minors}\label{app:A5}

From Cor.~\ref{cor:A1}–\ref{cor:A2} and
Lemmas~\ref{lem:A3}–\ref{lem:A4} it follows: the principal
$4\times4$ minor of the six normals $\{n(\xi_{j})\}$ satisfies
\[
  \operatorname{Vol}_{4}\bigl(n_{1},\dots,n_{6}\bigr)\;
  \gtrsim\;\lambda^{-3}D^{3/2},
\]
while the “mixed’’ minors are $\le C\lambda^{-3-\frac23}$, with crowding loss
no worse than $D^{3/2}$.

\subsection{Partition into $O(D)$ classes}
\label{app:A6}
\begin{lemma}[Angular partition into $O(D)$ classes]\label{lem:AD-colouring}
A family $\{\Theta\}$ of caps of radius $r$ with centers $|\xi|\sim\lambda$ can be partitioned into $\le C D$ disjoint
classes $C_1,\dots,C_M$ such that for any two caps in the same class one has $\angle(\xi_\Theta,\xi_{\Theta'})\ge\alpha$.
Here $D=\lambda^{1/12}$, $\alpha=c_0 r D^{1/2}$, and $C>0$ is an absolute constant.
\end{lemma}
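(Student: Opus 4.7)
The plan is to realize the desired partition as a proper vertex coloring of an auxiliary conflict graph. Define $\mathcal{G}$ on the vertex set $\{\xi_\Theta\}$ of cap centers by declaring two vertices adjacent whenever $\angle(\xi_\Theta,\xi_{\Theta'})<\alpha$. A partition into classes whose members are pairwise at angular separation $\ge\alpha$ is exactly a proper coloring of $\mathcal{G}$, so it suffices to exhibit one that uses at most $CD$ colors.

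The first step is to bound the maximum degree $\Delta(\mathcal{G})$. Fix a center $\xi_\Theta$ with $|\xi_\Theta|\sim\lambda$; its neighbors in $\mathcal{G}$ lie in the spherical cap of angular radius $\alpha$ around $\xi_\Theta$ on the shell $\{|\xi|\sim\lambda\}$. By the standard $r$-discretization recalled in §\ref{subsec:caps}, distinct cap centers occupy angular patches of area $\asymp(\lambda r)^2$ with bounded overlap multiplicity, whereas the ambient $\alpha$-cap has area $\asymp(\lambda\alpha)^2$. Comparing the two areas yields
\[
  \Delta(\mathcal{G})\ \lesssim\ \Bigl(\frac{\alpha}{r}\Bigr)^{2}\ =\ c_{0}^{2}\,D .
\]

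Once the degree bound is established, I would invoke the standard greedy coloring algorithm: enumerate the cap centers $\Theta_{1},\Theta_{2},\dots$ in an arbitrary order and assign to $\Theta_{k}$ the smallest positive integer not already used by any neighbor $\Theta_{j}$ with $j<k$. Since at most $\Delta(\mathcal{G})\lesssim D$ neighbors are ever seen during the assignment, the procedure terminates using at most $\Delta(\mathcal{G})+1\le CD$ colors, and the resulting color classes are the desired $C_{1},\dots,C_{M}$ with $M=O(D)$.

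The main obstacle is ensuring that the degree count is genuinely uniform in $\lambda$, so that no hidden $\lambda$-dependence is absorbed into the absolute constant $C$. This requires checking that the angular metric restricted to the annular shell $\{|\xi|\sim\lambda\}$ is comparable (up to absolute constants) to the spherical metric on $S^{2}$, which is immediate from the definition of $\angle(\xi,\eta)$ used throughout the paper, and that the overlap multiplicity of the cap decomposition of §\ref{subsec:caps} is $O(1)$. Both ingredients are standard; their role here is precisely to guarantee that the constant $C$ in $M\le CD$ carries no concealed powers of $\lambda$, which would otherwise corrupt the $+1$-in-$D$ contribution of Robust--Kakeya tallied in Table~\ref{tab:balance}.
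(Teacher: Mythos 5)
Your proposal is correct and follows essentially the same route as the paper's own sketch in Appendix~\ref{app:A6}: build the conflict graph with edges for pairs at angle $<\alpha$, bound the maximum degree by $(\alpha/r)^2\sim D$ via an area comparison, and finish with greedy coloring giving $\Delta+1\lesssim D$ classes. The additional remarks you make about metric comparability and $O(1)$ overlap multiplicity are reasonable bookkeeping but add nothing the paper omits in substance.
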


\begin{proof}[Idea of proof]
Consider the graph with vertices $\{\xi_\Theta\}$, joining by an edge pairs with angle $<\alpha$.
Each vertex has at most $C'(\alpha/r)^2\sim C'D$ neighbors (area of the angular annulus versus cap area).
Hence the maximum degree $\Delta\lesssim D$, and a greedy coloring yields at most $\Delta+1\le C D$ colors.
\end{proof}

\subsection{Tubular estimate in $\mathbb{R}^4$}\label{app:A7}

\begin{lemma}[Tubular measure (Wongkew at the scale $Q_\lambda$)]\label{lem:poly-sublevel-4d}
Let $P:\mathbb{R}^4\to\mathbb{R}$ be a polynomial of degree $d:=\deg P \le D^{1/4}$ and
$Z(P):=\{(t,x):\,P(t,x)=0\}$. Introduce the anisotropic mapping
\[
S_\lambda(t,x):=(\lambda^{3/2}t,\ \lambda^{1/2}x),
\qquad
\operatorname{dist}_\lambda\bigl((t,x),Z\bigr)
:=\operatorname{dist}\bigl(S_\lambda(t,x),\,S_\lambda(Z)\bigr),
\]
and set for a small absolute constant $c>0$
\[
\beta:=\frac{c\,D^{-1}}{d},\qquad
\mathcal N_\beta(P):=\Bigl\{(t,x)\in Q_\lambda:\ \operatorname{dist}_\lambda\bigl((t,x),Z(P)\bigr)<\beta\Bigr\}.
\]
Then there exists an absolute constant $C>0$ such that
\[
\bigl|Q_\lambda\cap \mathcal N_\beta(P)\bigr|\ \le\ C\,D^{-1}\,|Q_\lambda|.
\]
\end{lemma}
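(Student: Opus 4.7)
The plan is to reduce the anisotropic tubular estimate on $Q_\lambda$ to an ordinary (Euclidean) tubular estimate on a unit-scale box, and then invoke Wongkew's theorem in $\mathbb{R}^4$. The key observation is that $\operatorname{dist}_\lambda$ was defined precisely so that the rescaling $S_\lambda$ turns it into the Euclidean distance; hence a single linear change of variables removes all $\lambda$-dependence from the geometry, and the exponent $-1$ in $D$ will be produced entirely by the cancellation $d\cdot\beta=cD^{-1}$.

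First I would rescale. The map $S_\lambda(t,x)=(\lambda^{3/2}t,\lambda^{1/2}x)$ is linear with constant Jacobian $\lambda^{3/2}\cdot(\lambda^{1/2})^3=\lambda^3$, so $\widetilde Q:=S_\lambda(Q_\lambda)$ is an axis-aligned box of total volume $\lambda^3|Q_\lambda|\asymp 1$ whose sides all have length $\asymp 1$. Since the Jacobian is constant, relative measures are preserved:
\[
\frac{|Q_\lambda\cap\mathcal N_\beta(P)|}{|Q_\lambda|}
=\frac{|\widetilde Q\cap S_\lambda(\mathcal N_\beta(P))|}{|\widetilde Q|},
\]
and by the very definition of $\operatorname{dist}_\lambda$, the image $S_\lambda(\mathcal N_\beta(P))$ is exactly the Euclidean $\beta$-neighborhood of $S_\lambda(Z(P))$.

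Next I would transport the polynomial. Setting $P_\lambda:=P\circ S_\lambda^{-1}$, we have $S_\lambda(Z(P))=Z(P_\lambda)$ and, crucially, $\deg P_\lambda=\deg P\le d$, since $S_\lambda^{-1}$ is a diagonal linear map that rescales each variable but does not change degrees. The task is thereby reduced to estimating $|\widetilde Q\cap N_\beta^{\mathrm{Eucl}}(Z(P_\lambda))|$ on a unit-scale region. Now I would apply Wongkew's theorem: for a hypersurface $Z(q)\subset\mathbb{R}^4$ with $\deg q\le d$ and any $r>0$, the Euclidean $r$-tube of $Z(q)$ intersected with a unit cube has volume $\lesssim_4 d\cdot r$. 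Substituting $q=P_\lambda$ and $r=\beta=cD^{-1}/d$ collapses the factor $d$ and yields
\[
\bigl|\widetilde Q\cap N_\beta^{\mathrm{Eucl}}(Z(P_\lambda))\bigr|\ \lesssim\ d\cdot\beta\ =\ c\,D^{-1}\ \asymp\ D^{-1}\,|\widetilde Q|.
\]
Pulling back through $S_\lambda$ via the first-step identity gives the claimed bound $|Q_\lambda\cap\mathcal N_\beta(P)|\le CD^{-1}|Q_\lambda|$.

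The only genuinely non-trivial ingredient is the cited Wongkew bound; everything else is a bookkeeping change of variables. Accordingly, the main point the reader must trust is the \emph{linear} (rather than super-linear) dependence of the Wongkew constant on $d$ — it is precisely this linearity that makes the choice $\beta=cD^{-1}/d$ absorb one power of $d$ and leave a clean $D^{-1}$. The hypothesis $d\le D^{1/4}$ is not needed in the proof itself; it only ensures compatibility with the use of this lemma in §\ref{sec:alg-skin}, where the degree budget must fit with the algebraic shell construction.
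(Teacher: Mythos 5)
Your proposal is correct and follows the same route as the paper: rescale by $S_\lambda$ to a unit-scale box, observe $\deg(P\circ S_\lambda^{-1})=\deg P$ under the diagonal linear map, invoke Wongkew's linear-in-$d$ tubular bound, and let $d\cdot\beta=cD^{-1}$ cancel the degree. Your write-up is in fact slightly more careful than the paper's, which elides the factor $d$ in the intermediate step "$\lesssim\beta|Q_\lambda|$"; your remark that the hypothesis $d\le D^{1/4}$ is not used in the proof itself is also accurate.
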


\begin{proof}[Outline]
Let $P_\lambda:=P\circ S_\lambda^{-1}$. Then $S_\lambda\!\bigl(\mathcal N_\beta(P)\bigr)$ is a Euclidean $r$–tube
around $Z(P_\lambda)$ of thickness $r=\beta$ in a unit–scale region.
By Wongkew’s theorem \cite{Wongkew2003} in $\mathbb{R}^4$ the volume of such a tube is $\lesssim d\,r$.
Taking $r=\beta=cD^{-1}/d$ and returning to the original variables, we get
\(
|Q_\lambda\cap \mathcal N_\beta(P)|\lesssim \beta\,|Q_\lambda|\lesssim D^{-1}|Q_\lambda|.
\)
\end{proof}

\begin{remark}
Classical sublevel estimates of the form $\mathrm{meas}\{|P|<\beta\}\lesssim C(d)\,\beta^{1/d}$ do not give the desired $D^{-1}$
when $d\le D^{1/4}$ and $\beta=D^{-1/4}$. The tubular formulation with the choice $\beta=cD^{-1}/d$ and the use
of Wongkew’s estimate yields the precise $D^{-1}$ factor at the scale of $Q_\lambda$.
\end{remark}

\subsection{Packing in angular rings}\label{app:A8}

\begin{lemma}[Packing in $k$–rings]\label{lem:A8}
Let $\{\Theta\}$ be a family of caps of radius $r=\lambda^{-2/3}$ with centers
$\xi_\Theta\in\mathbb{S}^2$, and let $\alpha=c_0\,r\,D^{1/2}$ with $D=\lambda^{1/12}$.
Then an angular ring of width $\delta\sim k\alpha$ contains at most $C\,k^2 D$ caps,
where $C>0$ is an absolute constant.
\end{lemma}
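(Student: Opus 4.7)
The plan is to treat Lemma~\ref{lem:A8} as a standard area-counting argument on $\mathbb{S}^{2}$. First I would invoke the convention of \S\ref{subsec:caps} and the count~\eqref{eq:num-caps}: the cap family arises from the standard angular discretization associated to a maximal $r$-separated set of centers, so the caps have bounded overlap multiplicity and each one occupies angular area $\simeq r^{2}$. Consequently, for any measurable set $A\subset\mathbb{S}^{2}$,
\[
   \#\{\Theta : \xi_{\Theta}\in A\} \;\lesssim\; \frac{|A|}{r^{2}}
\]
with an absolute implicit constant. This reduces the claim to a pure area bound for the ring.

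Next I would estimate the area of an angular ring of width $\delta \sim k\alpha$. Since $\alpha = c_{0}\lambda^{-5/8}\ll 1$, the sphere is locally well-approximated by its tangent plane, and the ring is (up to an absolute factor) a spherical annulus of inner radius $R$ and outer radius $R+\delta$ for some $R \lesssim k\alpha$, consistent with the dyadic-annulus usage in \S\ref{subsec:tube-L2}. Elementary spherical geometry then gives
\[
   |\text{ring}| \;\lesssim\; \delta\,(2R+\delta) \;\lesssim\; (k\alpha)^{2}.
\]
Combining this with the packing bound and using $\alpha = c_{0}\,r\,D^{1/2}$,
\[
   \#\{\Theta : \xi_{\Theta}\in\text{ring}\}
   \;\lesssim\; \frac{(k\alpha)^{2}}{r^{2}}
   \;=\; k^{2}\,\Bigl(\frac{\alpha}{r}\Bigr)^{2}
   \;=\; c_{0}^{2}\,k^{2}D,
\]
which is the claim with absolute constant $C = c_{0}^{2}$ times the overlap constant from the discretization.

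The only genuinely non-routine point is fixing the geometric meaning of ``angular ring of width $\delta \sim k\alpha$'' so that the area bound is $(k\alpha)^{2}$ rather than the weaker $k\alpha^{2}$; the latter would correspond to a thin ring of width $\alpha$ at distance $k\alpha$ and would yield only the trivial count $kD$ (the version used in \S\ref{subsec:tube-L2}). The factor $k^{2}$ in the conclusion forces the ring to be thickened to width $\sim k\alpha$ as well, after which the argument is elementary spherical geometry plus volume packing, and no further analytic machinery is required. A minor side-check is that the local flat approximation introduces only absolute multiplicative errors, which is harmless since $\alpha\ll 1$ uniformly as $\lambda\to\infty$.
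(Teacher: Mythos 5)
Your proof is correct and follows the same area-counting argument the paper uses: compare the area $\asymp (k\alpha)^2$ of the ring to the area $\asymp r^2$ of a cap and use $\alpha/r=D^{1/2}$. Your closing remark about the thin-ring variant giving only $\lesssim kD$ is exactly the content of Remark~\ref{rem:A11}, so you have also correctly identified the geometric convention needed for the $k^2D$ bound.
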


\begin{proof}
The area of a spherical angular ring of width $k\alpha$ on $\mathbb{S}^2$ is
$\asymp (k\alpha)^2$, while the area of one cap is $\asymp r^2$.
Hence
\[
  \#\{\Theta\ \text{in the ring}\}
  \;\lesssim\;
  \Bigl(\tfrac{k\alpha}{r}\Bigr)^2
  \;=\; k^2\Bigl(\tfrac{\alpha}{r}\Bigr)^2
  \;=\; k^2 D,
\]
since $\alpha/r=D^{1/2}$.
\end{proof}

\begin{remark}[Thin ring]\label{rem:A11}
If we consider a \emph{thin} ring of radius $\sim k\alpha$ and fixed thickness $\alpha$
(rather than $k\alpha$), then the area of such a ring is $\asymp k\,\alpha^2$, and therefore
\[
  \#\{\Theta\ \text{in a thin ring of thickness } \alpha\}\;\lesssim\;
  \frac{k\,\alpha^2}{r^2}\;=\;kD.
\]
This form ($\lesssim kD$) is precisely what is used in the dyadic angular decomposition in \S\ref{subsec:tube-L2}.
\end{remark}

\newpage


\section{Check-points of the proof}
\label{app:B}

Below we collect those places in the main text where computational details and
scaling exponents were re-verified “by hand’’ after the updates in
§§\ref{sec:narrow}, \ref{sec:balance}.  
Each item is marked by the status \texttt{OK} (checked) or by a brief
comment if a further clarification is needed.

\begin{table}[h]
  \centering
  \caption{Summary of verified check-points}
  \label{tab:checkpoints}
  \begin{tabular}{@{}p{0.36\linewidth}p{0.46\linewidth}p{0.12\linewidth}@{}}
    \toprule
    Argument node & Brief verification & Status \\ \midrule
    Global balance (Table~\ref{tab:global-balance}) &
    \textbf{Without §\ref{sec:tube-pack}}: 
    $\sigma_\lambda=-2557/576\ (\approx-4.44)$, $\sigma_D=-3$ \,(\,matches §\ref{sec:balance}\,). \par
    \textbf{With §\ref{sec:tube-pack} instead of Robust Kakeya}: 
    $\sigma_\lambda=-3277/576\ (\approx-5.69)$, \underline{$\sigma_D=-15/4$} (regimes are \emph{mutually exclusive}). &
      \texttt{OK} \\[4pt]
    Kernel block (6 in $t$, 6 in $x'$) &
    Raw Schur after $12$ IBP and the physical Jacobian:
    \[
      \lambda^{+3} \cdot \lambda^{-2}D^{-3} \cdot \lambda^{-3} \;=\; \lambda^{-2}D^{-3}.
    \]
    $TT^{*}$ refinement:
    \[
      (\lambda^{-2})\cdot(\lambda\alpha)^{-8}=\lambda^{-5}\ \Rightarrow\ 
      \|T\|_{2\to2}\lesssim \lambda^{-5/2},
    \]
    hence $\|K\|_{2\to2}\lesssim \lambda^{-9/2}D^{-3}$ (see §\ref{sec:kernel}). &
    \texttt{OK} \\[4pt]
    Separation of regimes Robust\,Kakeya / Tube packing &
      The regimes are used \emph{mutually exclusively}. \par
      \emph{High density} ($>c_*D$): only Robust\,Kakeya is active, contributing $+\tfrac{1}{12}$ in $\lambda$ and $+1$ in $D$. \par
      \emph{Low density} ($\le c_*D$): only §\ref{sec:tube-pack} (tube packing) is active, contributing $-\tfrac{7}{6}$ in $\lambda$ and $+\tfrac{1}{4}$ in $D$. \par
      There is no mixing of contributions; this is precisely why in the variant “with §\ref{sec:tube-pack}’’ the final $D$–sum equals $-15/4$. &
      \texttt{OK} \\[4pt]
    Estimate of the sum of tube overlaps (formula \eqref{eq:L2-tubes}) &
      Recomputed:
      \[
        S\;\lesssim\; D^{1/2}\lambda^{-7/3},\qquad
        \Bigl\|\sum_{\Theta} \mathbf 1_{\widetilde{\mathcal T}_{\Theta}}\Bigr\|_{L^{2}(Q_{\lambda})}
        \;\lesssim\; \lambda^{-7/6}D^{1/4}.
      \]
      Agrees with §\ref{sec:tube-pack} and is used only in the “non-robust’’ regime. &
      \texttt{OK} \\ \bottomrule
  \end{tabular}
\end{table}

\bigskip
\noindent
\textbf{Remarks.}
\begin{enumerate}[label=\textbf{\arabic*.}, wide, labelwidth=0pt, itemsep=4pt]
  \item In all items marked \texttt{OK},
        the computations reproduce the final exponents
        up to harmless constants; these nodes are considered fully closed.
  \item Additional details
        (the gradient $\partial_t\Phi$, the “six-color’’ partition algorithm,
        the 4D version of the Mark–Graun lemma, etc.)
        are in Appendices~A and C; they do not affect the cumulative exponents.
  \item Logarithmic
        factors $\log^{k}\!\lambda$ do not appear; the estimate remains \(\varepsilon\)–free.
\end{enumerate}

\newpage


\section{Clarifications and auxiliary material}%
\label{app:clarifications}

Below we collect explanations on technical details for which, after the updates in
\ref{sec:narrow} and \ref{sec:balance},
only secondary factors changed.
The main body of the proofs is unaffected.

\subsection{$x$–dependence of the amplitude}
\label{app:C1}
When integrating by parts in $x'=(x_2,x_3)$ the amplitude may be of the form
\[
a(t,x,\xi)=\omega(t)\vartheta(\xi)\chi(x/\lambda^{1/2}).
\]
If one of the operators $L_{x'}$ falls on $\chi$,
an additional factor $\lambda^{-1/2}$ appears on top of the baseline gain $(\lambda\alpha)^{-1}=\lambda^{-3/8}$
(see \ref{app:damp-IBP}), i.e. $\|L_{x'}(\chi)\|_\infty\lesssim \lambda^{-5/6}D^{-1/2}$. Therefore the $x$–dependence of the amplitude
does \emph{not worsen} the kernel estimate; in fact, the cumulative exponent in $\lambda$ becomes more negative by $1/2$
under one such hit, and the balance in $D$ remains unchanged.

\subsection{Partition into $O(D)$ classes}%
\label{app:C2}

The adjacency graph is constructed as before, but now a single cap
may have up to $\asymp D$ neighbors
(since $\alpha/r=D^{1/2}$).
Therefore, instead of a constant six we need
$O(D)$ “colors’’.
This suffices: within each class the normals satisfy
the condition of Lemma \ref{lem:A4},
and the additional $D$ factor is already accounted for
in the \textit{Robust--Kakeya} block; no new losses appear.

\subsection{Estimate of the operator \texorpdfstring{$L_{x'}$}{Lx'} in the narrow regime}%
\label{app:C3}

In the \emph{narrow} regime (all $\xi_{m}$ lie in one $\alpha$–cluster) 
we do not use a universal lower bound for $|\nabla_{x'}\Phi_6|$, 
since under near-total transverse cancellation of the three differences 
in $x'$ this quantity can be arbitrarily small. 
Instead, all the gain in the narrow regime is extracted 
from time integrations by parts and globalized via the $TT^{*}$ procedure 
(see \S\ref{sec:narrow}).

If $|\nabla_{x'}\Phi_6|$ is small, then by the \emph{transverse dichotomy} 
of \S\ref{subsec:gradients} the corresponding contribution is redirected 
to the \emph{Robust\,Kakeya} block (\S\ref{subsec:kakeya-robust}) 
or to the narrow cascade (\S\ref{sec:narrow}); 
these regimes are used \emph{mutually exclusively} with the kernel block (\S\ref{sec:kernel}). 

Thus in the narrow regime the operator $L_{x'}$ 
is not applied in the kernel block calculations.

\subsection{Boundary layer
           \texorpdfstring{$|t|\ll\lambda^{-3/2}$}{|t| ≪ λ^{-3/2}}}%
\label{app:C4}

On the layer $|t|\le\lambda^{-3/2}/16$ we do not need to assume
non-overlap of the tubes
\(
  \{(t,x):\,|x-2t\xi_{\Theta}|\le\rho\}
\).
Indeed, for fixed $t$ all $x$–sections
lie inside the ball
\(
  \{|x|\le \rho+2|t|\}\subset\{|x|\le 2\rho\}
\)
(since $2|t|\le \lambda^{-3/2}/8\ll \rho=\lambda^{-1/2}$),
hence the measure of the boundary layer is crudely estimated as
\[
\bigl|\{|t|\le\lambda^{-3/2}/16\}\cap Q_\lambda\bigr|
\;\lesssim\;
\Bigl(\tfrac{\lambda^{-3/2}}{8}\Bigr)\cdot (2\rho)^3
\;\lesssim\; \lambda^{-3}
\;\simeq\; |Q_\lambda|.
\]
In other words, the contribution of this layer is controlled by a constant and
does not affect the exponents in $\lambda$ and $D$.
\smallskip

\subsection{Tubular estimate in \texorpdfstring{$\mathbb{R}^{4}$}{R$^4$} at the scale \texorpdfstring{$Q_\lambda$}{Q\_\lambda}}%
\label{app:C5}

\begin{lemma}[Version at the scale \texorpdfstring{$Q_\lambda$}{Q\_\lambda}]\label{lem:MG4D}
Let $P(t,x)$ be a polynomial of degree $d \ge 1$, $d \le D^{1/4}$, and
denote $Z(P) := \{(t,x) \in \mathbb{R}^4 : P(t,x) = 0\}$.
With the notation of \S\ref{subsec:skin-setup} introduce the anisotropic mapping
\[
S_\lambda(t,x) := (\lambda^{3/2} t,\ \lambda^{1/2} x), \qquad
\operatorname{dist}_\lambda\bigl((t,x),Z\bigr)
:= \operatorname{dist}\bigl(S_\lambda(t,x),\ S_\lambda(Z)\bigr),
\]
and set
\[
\beta := \frac{c\,D^{-1}}{d}, \qquad
\mathcal{N}_\beta(P)
:= \bigl\{(t,x) \in Q_\lambda : \operatorname{dist}_\lambda\bigl((t,x),Z(P)\bigr) < \beta \bigr\}.
\]
Then there exists an absolute constant $C > 0$ such that
\[
   \bigl|Q_\lambda \cap \mathcal{N}_\beta(P)\bigr|
   \;\le\; C\,D^{-1}\,|Q_\lambda|.
\]
\end{lemma}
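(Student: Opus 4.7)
The plan is to flatten $Q_\lambda$ to unit scale via the anisotropic map $S_\lambda$ and then invoke the classical tubular volume bound for algebraic hypersurfaces in $\mathbb{R}^4$. This repeats the argument already used in Lemma~\ref{lem:poly-sublevel-4d}; I record the steps because the appendix calls for the precise constants at the scale of $Q_\lambda$.

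First I would change variables. The map $S_\lambda$ sends $Q_\lambda$ onto the unit box $\{|\tau|\le 1/2,\ |y|\le 1/2\}\subset\mathbb{R}^4$, a set of Lebesgue measure $\asymp 1$, with constant Jacobian $\lambda^{3}$; in particular $|Q_\lambda|\asymp\lambda^{-3}$ and relative measures are preserved up to an absolute factor. The composition $P_\lambda:=P\circ S_\lambda^{-1}$ is again a polynomial of total degree $d$, and $S_\lambda(Z(P))=Z(P_\lambda)$. By the very definition of $\operatorname{dist}_\lambda$, the image $S_\lambda(\mathcal{N}_\beta(P))$ coincides with the ordinary Euclidean $\beta$–neighborhood of $Z(P_\lambda)$ inside the unit box.

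Next I would apply Wongkew's tubular estimate in $\mathbb{R}^4$: the Lebesgue measure of the Euclidean $r$–tube around the zero set of a polynomial of degree $d$, intersected with a unit cube, is at most $C\,d\,r$ with $C>0$ an absolute constant depending only on the ambient dimension. Substituting $r=\beta=cD^{-1}/d$ yields
\[
   \bigl|S_\lambda(Q_\lambda)\cap S_\lambda(\mathcal N_\beta(P))\bigr|
   \;\le\; C\,d\,\beta
   \;=\; Cc\,D^{-1}.
\]
Pulling back by $S_\lambda^{-1}$ multiplies both sides by $|Q_\lambda|/|S_\lambda(Q_\lambda)|\asymp 1$, giving $|Q_\lambda\cap \mathcal N_\beta(P)|\le C'\,D^{-1}|Q_\lambda|$ as claimed.

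The only delicate point is to confirm that Wongkew's constant depends solely on the ambient dimension $n=4$, so that the explicit factor $d$ cancels exactly against the $d^{-1}$ built into the choice of $\beta$. The hypothesis $d\le D^{1/4}$ is never used inside this measure bound itself: it is invoked only upstream (in \S\ref{subsec:skin-setup} and the global balance) to ensure that a polynomial partition of the required degree exists. Once the form of Wongkew's bound is fixed, no hidden dependence on $\lambda$, $D$, or the coefficients of $P$ appears, and the estimate follows.
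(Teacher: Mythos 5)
Your proof is correct and follows the same route as the paper's own argument: rescale by $S_\lambda$ to unit scale, identify $S_\lambda(\mathcal N_\beta(P))$ with the Euclidean $\beta$--tube around $Z(P\circ S_\lambda^{-1})$, apply Wongkew's tubular volume bound in $\mathbb R^4$, and pull back. Your added observations — that $|S_\lambda(Q_\lambda)|\asymp 1$ and that the restriction $d\le D^{1/4}$ plays no role in the measure estimate itself — are accurate and consistent with the paper.
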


\begin{proof}
Let $P_\lambda := P \circ S_\lambda^{-1}$. Then
$S_\lambda\!\bigl(\mathcal{N}_\beta(P)\bigr)$ is a Euclidean
$r$–tube around $Z(P_\lambda)$ of thickness $r = \beta$
in a unit–scale region. By Wongkew’s theorem~\cite{Wongkew2003}
in $\mathbb{R}^4$ the volume of such a tube is
\(\lesssim d\,r\). For $r = c D^{-1} / d$ we get
\[
\bigl| S_\lambda\!\bigl(\mathcal{N}_\beta(P)\bigr) \bigr| \ \lesssim\ D^{-1}.
\]
Returning to the original variables and noting that $S_\lambda$
preserves the volume of $Q_\lambda$ up to an absolute constant,
we obtain the required bound
\(\bigl|Q_\lambda \cap \mathcal{N}_\beta(P)\bigr| \lesssim D^{-1} |Q_\lambda|\).
\end{proof}

\begin{remark}
Previously a sublevel–set formulation of the form $\{|P| < \beta\}$ was used.
To obtain the factor $D^{-1}$ at the scale $Q_\lambda$
it is more convenient to work with the tubular neighborhood of the zero set
in the anisotropic metric and apply Wongkew’s estimate for the volume
of an $r$–tube: $\mathrm{vol}\bigl(\mathcal{N}_r(Z(P))\bigr) \lesssim d\,r$.
Equivalence with the formulation of \S\ref{subsec:skin-setup}
follows after introducing the mapping $S_\lambda$ and choosing
$\beta = c D^{-1} / d$.
\end{remark}

\subsection{Cumulative effect on the balance}\label{app:C6}

Clarifications \ref{app:C1}–\ref{app:C5} either improve constants or do not change the scales.
\[
  \sigma_\lambda=-\tfrac{2557}{576}\approx -4.44,\qquad \sigma_D=-3
\]
remain unchanged in the statement of Theorem~\ref{thm:main}; the $\varepsilon$–free conclusion persists.

\newpage


\section{Proof of sign–independence for \texorpdfstring{$x$}{x}–dependent amplitude}
\label{app:damp}

In the main text (see \S\ref{sec:kernel}) the kernel amplitude
was chosen in the form
\[
    a(t,\underline{\xi})=\omega(t)\,\vartheta(\underline{\xi}),
\]
that is, without explicit dependence on~\(x\).
In practice one often requires an \emph{\(x\)–dependent}
amplitude
\[
    a(t,x,\underline{\xi})=\omega(t)\,\vartheta(\underline{\xi})
    \,\chi\!\bigl(x/\lambda^{1/2}\bigr),
\]
where the smooth function \(\chi\) satisfies
\(\|\partial^{\beta}_{x}\chi\|\_{L^{\infty}}\le C\_\beta\)
for all multiindices~\(\beta\).
This appendix shows that such a modification
\emph{strengthens} the negative exponent in~\(\lambda\)
and does not spoil the global balance.

\subsection{Integration by parts in~\(x'\)}
\label{app:damp-IBP}

\begin{lemma}\label{lem:D1}
Let $L_{x'}$ be the integration–by–parts operator
defined in §\ref{subsec:IBP} (see also \eqref{eq:grad-xp}). 
Then for any function \(\chi=\chi(x/\lambda^{1/2})\) in the definition
of the amplitude one has
\[
    \|L\_{x'}(\chi)\|\_{L^{\infty}}
    \;\;\lesssim\;\;
    \lambda^{-5/6} D^{-1/2}.
\]
\end{lemma}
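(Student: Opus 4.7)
The plan is to reduce the claim to a direct pointwise computation combining the chain rule for the spatial window with the transverse gradient lower bound \eqref{eq:grad-xp} from the broad regime. First I would unfold the definition from §\ref{subsec:IBP}, writing
\[
   L_{x'}(\chi) \;=\; |\nabla_{x'}\Phi|^{-2}\,\nabla_{x'}\Phi \,\cdot\, i\nabla_{x'}\chi,
\]
which immediately gives the pointwise bound $|L_{x'}\chi| \le |\nabla_{x'}\Phi|^{-1}\,|\nabla_{x'}\chi|$. Notice that $\chi$ contributes only through its gradient in the transverse variable; no time derivative or phase differentiation is needed.

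Next I would estimate the two factors separately. For the numerator, applying the chain rule to $\chi(x/\lambda^{1/2})$ yields $\nabla_{x'}\chi = \lambda^{-1/2}(\nabla\chi)(x/\lambda^{1/2})$, so that the assumed uniform bounds $\|\partial^{\beta}\chi\|_{L^{\infty}} \le C_{\beta}$ give $\|\nabla_{x'}\chi\|_{L^{\infty}} \lesssim \lambda^{-1/2}$. For the denominator, by the convention adopted in §\ref{subsec:gradients} the operator $L_{x'}$ is applied only on the basket $\mathcal{B}_{\ge}$, where \eqref{eq:grad-xp} provides the uniform lower bound $|\nabla_{x'}\Phi|\gtrsim \lambda\alpha = c_{0}\lambda^{1/3}D^{1/2}$. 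Combining,
\[
   \|L_{x'}\chi\|_{L^{\infty}} \;\lesssim\; (\lambda\alpha)^{-1}\cdot \lambda^{-1/2}
   \;=\; \lambda^{-1/3}D^{-1/2}\cdot \lambda^{-1/2}
   \;=\; \lambda^{-5/6}D^{-1/2},
\]
which is the claimed estimate; as a sanity check, substituting $D=\lambda^{1/12}$ gives $\lambda^{-7/8}$, matching the heuristic in §\ref{subsec:IBP} that localization by $\chi$ adds a factor $\lambda^{-1/2}$ on top of the baseline $(\lambda\alpha)^{-1}=\lambda^{-3/8}$.

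The only delicate point is the regime on which the bound is valid: the denominator $|\nabla_{x'}\Phi|^{-1}$ is controlled solely on $\mathcal{B}_{\ge}$, and an uncritical application on $\mathcal{B}_{<}$ would be illegitimate. This is, however, not a genuine obstacle here, since the dichotomy of §\ref{subsec:gradients} (together with Remark~\ref{rem:regimes}) routes the almost-resonant contributions to the Robust--Kakeya block (§\ref{sec:kakeya}) and the narrow cascade (§\ref{sec:narrow}), so the use of $L_{x'}$ in the kernel block is confined precisely to where \eqref{eq:grad-xp} applies. No identity of the form $L_{x'}^{\ast}e^{i\Phi}=e^{i\Phi}$ is invoked in the statement; the lemma is a purely pointwise assertion about the symbol acting on the amplitude, to be combined later with the other IBP factors in the analysis of App.~\ref{app:damp}.
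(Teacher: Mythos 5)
Your proof is correct and follows essentially the same route as the paper's: bound $|L_{x'}\chi|$ pointwise by $|\nabla_{x'}\Phi|^{-1}|\nabla_{x'}\chi|$, use the broad-regime lower bound \eqref{eq:grad-xp} for the denominator and the chain-rule factor $\lambda^{-1/2}$ for the numerator, then multiply. The only difference is that you make the pointwise reduction and the restriction to $\mathcal{B}_{\ge}$ explicit, which the paper leaves implicit but clearly intends.
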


\begin{proof}
From \eqref{eq:grad-xp} we have
\(
   |\nabla_{x'}\Phi|\gtrsim\lambda\alpha
   =c_{0}\lambda^{1/3}D^{1/2}.
\)
The operator \(L\_{x'}\) itself contributes the factor
\(\lambda^{-1/3}D^{-1/2}\).
Applying \(\partial\_{x'}\chi(x/\lambda^{1/2})\)
produces an additional factor \(\lambda^{-1/2}\).
Combining both gains yields the claimed bound
\(\lambda^{-5/6}D^{-1/2}\).
\end{proof}

\begin{remark}
Without the \(x\)–dependent \(\chi\) the operator \(L\_{x'}\)
gives only \(\lambda^{-1/3}D^{-1/2}\).
Therefore the new amplitude makes the final
coefficient \emph{even more negative} in~\(\lambda\),
which only improves the balance (the “Kernel’’ line in
Table~\ref{tab:global-balance}).
\end{remark}

\subsection{Conclusion for the full kernel estimate}

None of the stages of the kernel estimate in \S\ref{sec:kernel}
is worsened by replacing the amplitude with
\(\omega(t)\vartheta(\xi)\chi(x/\lambda^{1/2})\);
the formula
\[
    \|K\|\_{L^{2}\to L^{2}}\lesssim\lambda^{-9/2}D^{-3}
\]
remains valid, and under exact counting the exponent
in~\(\lambda\) decreases by another~\(\tfrac12\).

\bigskip
\noindent
\textbf{Conclusion.}\;
The assumption of no \(x\)–dependence in the amplitude
is not essential: adding any
\(\chi(x/\lambda^{1/2})\in C^{\infty}\)
does not worsen, and slightly \emph{improves}, the overall balance of exponents.
All results of the main sections and
Theorem~\ref{thm:main} remain fully valid.


\newpage

\cleardoublepage
\phantomsection
\addcontentsline{toc}{section}{References}
\bibliographystyle{unsrt}
\bibliography{commutator_refs}
\end{document}